\definecolor{MyDarkBlue}{rgb}{0,0.08,0.50}
\definecolor{BrickRed}{rgb}{0.65,0.08,0}
\newtheorem{Theorem}{Theorem}
\newtheorem{Assumption}{Assumption}
\newtheorem{Lemma}{Lemma}[section]
\newtheorem{lemma}{Lemma}[section]
\newtheorem{Proposition}[Lemma]{Proposition}
\newtheorem{Corollary}{Corollary}
\newtheorem{remark}[Lemma]{Remark}
\newcommand{\indep}{\perp \!\!\! \perp}
\newcommand{\normal}{\operatorname{N}}
\newcommand{\exponential}{\operatorname{Exp}}
\newcommand{\s}{\quad}
\newcommand{\R}{\mathbb{R}}
\newcommand{\non}{\nonumber}
\newcommand{\beq}{\begin{eqnarray*}}
\newcommand{\eeq}{\end{eqnarray*}}
\newcommand{\beqn}{\begin{eqnarray}}
\newcommand{\eeqn}{\end{eqnarray}}
\newcommand{\bt}{\begin{Theorem}}
\newcommand{\et}{\end{Theorem}}
\newcommand{\bas}{\begin{Assumption}}
\newcommand{\eas}{\end{Assumption}}
\newcommand{\be}{\begin{equation}}
\newcommand{\ee}{\end{equation}}
\newcommand{\N}{\mathbb{N}}
\newcommand{\mb}[1]{{\mathbf #1}}
\newcommand{\Var}{{\rm Var}}
\numberwithin{equation}{section}
\definecolor{darkgreen}{rgb}{0,.4,0}
\definecolor{darkagenta}{rgb}{.5,0,.5}
\definecolor{darkred}{rgb}{1,0,0}
\definecolor{darkblue}{rgb}{0,0,.4}
\begin{document}

\tikzset{every node/.style={auto}}
 \tikzset{every state/.style={rectangle, minimum size=0pt, draw=none, font=\normalsize}}
 \tikzset{bend angle=20}

	\author{Filip Lindskog
	\thanks{Stockholm University, Department of Mathematics, lindskog@math.su.se
	}
\and Abhishek Pal Majumder
	\thanks{Corresponding author, Stockholm University, Department of Mathematics, majumder@math.su.se
	}
      }

\title{Exact long time behavior of some regime switching stochastic processes}

\maketitle

\begin{abstract}
Regime switching processes have proved to be indispensable in the modeling of various phenomena, allowing model parameters that traditionally were considered to be constant to fluctuate in a Markovian manner in line with empirical findings.  
We study diffusion processes of Ornstein-Uhlenbeck type where the drift and diffusion coefficients $a$ and $b$ are functions of a Markov process with a stationary distribution $\pi$ on a countable state space. 
Exact long time behavior is determined for the three regimes corresponding to the expected drift: $E_{\pi}a(\cdot)>0,=0,<0$, respectively. Alongside we provide exact time limit results for integrals of form $\int_{0}^{t}b^{2}(X_{s})e^{-2\int_{s}^{t}a(X_{r})dr}ds$ for the three different regimes. Finally, we demonstrate natural applications of the findings in terms of Cox-Ingersoll-Ross diffusion and deterministic SIS epidemic models in Markovian environments. Exact long time behaviors are naturally expressed in terms of solutions to the well-studied fixed-point equation in law $X\stackrel{d}{=}AX+B$ with $X \indep (A,B)$. 
\end{abstract}

\section{Introduction}
Models based on regime switching stochastic processes have received considerable attention for their applications in
quantitative finance, actuarial science, economics, biology and ecology. 
In quantitative finance, volatility, interest rates and asset prices are subjects to risky market environments that fluctuate over different regimes in a Markovian manner. Understanding how critical parameters (that determine stability or instability of the process of interest) characterizing the ``switching regimes" vary stochastically over time and affect the long time behavior of the overall process is essential for making short and long term predictions. 
Examples of such applications are 
\cite{AngTimmermann12}, \cite{BenSaida15}, \cite{ fink2017regime} and \cite{GCJL00} in the context of stochastic volatility modelling in financial market;  
\cite{zhang2016long} considering stochastic interest rate models with Markov switching;  
\cite{Hardy01}, \cite{ LinKenHailiang09} and \cite{ShenSiu13} studying long term behavior of stock returns and bond pricing.  
Similar to quantitative finance, regime switching stochastic processes are frequently used in actuarial science for solvency investigations, e.g. \cite{abourashchi2016pension}, mortality modeling, e.g. \cite{GMLT15}, and in the context of disability insurance, e.g. \cite{DjehicheLofdahl18}. 

Monographs containing both the theoretical foundations and applications of regime switching processes are \cite{mao2006stochastic} and \cite{yin2010hybrid}. Significant contributions to the theoretical foundation are 
\cite{shao2014ergodicity}, \cite{shao2015ergodicity} and \cite{jinghai2015criteria} by Jinghai Shao. 
A common theme of these works is a stochastic dynamical system $(Y_{t},X_{t})_{t\ge 0}$, where the process of interest $Y:=(Y_{t})_{t\ge 0}$ is affected by the process $X:=(X_{t})_{t\ge 0}$ that describes the dynamics of a switching environment. 
For a class of general diffusion processes $Y$ the aforementioned works investigated necessary and sufficient conditions under which properties related to stability/instability such as geometric/polynomial ergodicity \cite{shao2014ergodicity},\cite{shao2015ergodicity}, positive/null recurrence or transience \cite{jinghai2015criteria}, explosivity, existence and uniqueness of moments of stationary distributions hold.  
In a similar context \cite{benaim2016lotka} (and references therein) addresses questions related with survival or extinction of competing species in Lotka-Volterra model influenced by switching parameters in terms of the underlying hidden Markov environment. A main theme is the analysis of persistence (see section 4 or Theorem 4.1 of \cite{benaim2016lotka}) phrased in terms of so-called Lyapunov drift type criteria and similar concepts. 
In \cite{cloez2015exponential} a large class of general regime switching Markov processes are considered where a type of condition referred to as ``geometric contractivity" ensures exponential stability of the overall process. 
In contrast to the general stability results described above, there are very few works giving exact characterizations of long time behaviors, which are inevitably model specific. In this paper, we analyze the long time behavior of processes of Ornstein-Uhlenbeck type and Cox-Ingersoll-Ross models in a regime switching context, and provide exact explicit characterizations. To our knowledge, such explicit characterizations have not appeared in the literature.

The initial object of study in this paper is an $\R$-valued Ornstein-Uhlenbeck process in a Markovian environment, denoted by $Y=(Y_{t})_{t\ge 0}$, defined as the solution to the SDE 
\begin{align}
dY_{t}=-a(X_{t})Y_{t}dt + b(X_{t})dW_{t}, \s Y_{0}=y_{0}\in{\R} ,\label{model1}
\end{align}
where $(W_{t})_{t\ge 0}$ is standard Brownian motion which is independent of $X:=(X_{t})_{t\ge 0}$ that represents the background environment. $X$ is an $S$-valued, where $S$ is a countable set, jump type process with rate functions $\lambda_{ij}:\R\to\R_+$, $(i,j)\in S^2$, satisfying
\beqn
P\big[X_{t+\delta}=j\mid X_{t}=i,Y_{t}=x\big]=
\left\{\begin{array}{ll}
\lambda_{ij}(x)\delta+ o(\delta), & \text{ if } i\neq j, \\
1+\lambda_{ii}(x)\delta +o(\delta), & \text{ if } i=j,
\end{array}\right.
\label{model2}
\eeqn
with notation $\lambda_{ii}(x):=-\sum_{j\neq i \in S}\lambda_{ij}(x)$.  The functions $a,b:S\to \R$ are arbitrary (as long as a path-wise unique weak solution of \eqref{model1} can be insured) denoting, respectively, the drift and the diffusion functions. $(X_{t},Y_{t})_{t\ge 0}$ is a Markov process with respect to its natural filtration $(\mathcal{F}^{X,Y}_t)_{t\geq 0}$.
Given ``$\lambda_{ij}(x)$ is constant with respect to $x$", the process $X$ is a continuous-time Markov chain, with respect to its natural filtration $(\mathcal{F}^{X}_t)_{t\geq 0}$, satisfying the hidden Markovian assumption:
\begin{align}
X_{t}\indep_{X_{s}} Y_{s}\quad \text{for all } t>s \label{causal},
\end{align}
saying that for all $t>s$, $X_{t}$ is conditionally independent of $Y_{s}$ given $X_{s}$.

For any arbitrary stochastic process $X := (X_{t})_{t\ge 0}$, by ergodicity we mean that there exists a probability measure $\mu$ such that, regardless of $X_{0}$, the distribution of $X_{t}$ converges weakly to $\mu$ as $t\to\infty$.
$\mu$ is called the limiting measure. If $X$ is Markovian and irreducible in a countable state space $S$, the limiting measure $\mu$ is its unique invariant measure \cite{norris1998markov}.

Throughout the text we assume that the hidden Markov chain $X$ is ergodic with stationary distribution $\pi:=\{\pi_{j}:j\in S\}.$  The process $Y$ is attractive or stable if $E_{\pi}a(\cdot)>0$ holds, otherwise it is divergent if $E_{\pi}a(\cdot)<0$ and null recurrent if $E_{\pi}a(\cdot)=0$ (can be shown using the Lyapunov function construction ideas from \cite{hairer2010convergence}, \cite{jinghai2015criteria}). Under the stability assumption $E_{\pi}a(\cdot)>0$, a trichotomy of possible tail bahaviors of the stationary distribution was established in \cite{bardet2010long}. 
Using Fourier analysis techniques, \cite{ZW2017stationayOU} provided precise results on ergodicity when $|S|=2$. 
A key contribution of our paper is a precise result on ergodicity including an explicit representation for the stationary distribution when $S$ is a countable state space, allowing computation of probabilities $\lim_{t\to\infty}P[Y_{t}>y]$. 
The result is generalized by generalizing the model \eqref{model1} in different ways.  
Under the instability assumption $E_{\pi}a(\cdot)\le 0$ no stationary distribution exists and we determine how $Y$ diverges by providing weak limits for the scaled fluctuations $\frac{\log|Y_{t}|}{\sqrt{t}}$ which translates to the behavior of $|Y_{t}|^{\frac{1}{\sqrt{t}}}$. In all long time results we describe how introducing a regime switching component leads to mixture type representations characterizing the long time behavior.

In parallel to the characterization of the long time behavior of the model \eqref{model1} we provide the corresponding explicit characterization of the long time behavior of integrals of the type
\begin{align}
\int_{0}^{t}d(X_{s})e^{-\int_{s}^{t}c(X_{r})dr}ds\label{Inte1}
\end{align}
for the three regimes $E_{\pi}c>0,=0,<0$ corresponding to positive recurrence, null recurrence and transience. 

Several previous works, 
e.g. \cite{gjessing1997present}, \cite{bertoin2005exponential}, \cite{maulik2006tail}, \cite{behme2015exponential}, \cite{zhang2016long} and \cite{feng2019exponential} have studied exponential functionals of L{\'e}vy processes. For instance, in \cite{gjessing1997present} the asymptotic behavior of integrals of the form $\int_{0}^{t} e^{-R_{s}}dP_{s}$ as $t\to\infty$ was explored, where $P$ and $R$ are independent L{\'e}vy processes and the property 
$$
\big(P_{t},R_{t}\big)\stackrel{d}{=}\big(P_{s},R_{s}\big)+\big(\widetilde{P}_{t-s},\widetilde{R}_{t-s}\big),
\quad\big(\tilde{P},\tilde{R}\big)\quad\text{is an independent copy of }\big(P,R\big),
$$ 
helps the analysis significantly. We derive exact limit results for integrals of the type \eqref{Inte1},  
where $X$ is a continuous time Markov chain on a countable state space. The asymptotic analysis requires quite different methods from those used for the corresponding analysis for exponential functionals of L{\'e}vy processes. 
Asymptotic analysis similar to the one presented in the current paper was done in \cite{bardet2010long}. Proposition 4.1 in \cite{bardet2010long} yields asymptotic bounds for \eqref{Inte1}, but not the exact asymptotic behavior that we present here.
   
The paper is organized as follows. Section \ref{sec_prel} sets notation and presents basic model assumptions. 
In Section \ref{sec_stable}, exact long time characterizations for the stochastic process \eqref{model1} and different generalizations are presented under assumptions corresponding to the stable regime of the aforementioned process \eqref{model1}. Section \ref{sec_unstable} presents long time characterizations corresponding to the unstable regime when no stationary distribution exists. Section \ref{sec_applications} contains applications of the findings in Sections \ref{sec_stable} and \ref{sec_unstable} to the CIR model, originally introduced as a model for interest rates, and to SIS models used in epidemiology. The proofs are found in Section \ref{sec_proofs}.
  
\section{Preliminaries and model assumptions}\label{sec_prel}

Whenever relevant, random elements appearing are assumed to be defined on a common probability space with probability measure $P$ and expectation operator $E$.
The following notations will be used in this article. $\mathbb{R}^{d}$ will denote the $d$ dimensional Euclidean space with the usual Euclidean norm $|\cdot|$. The set of natural numbers is denoted by $\mathbb{N}$. 
Cardinality of a finite set $S$ is denoted by $|S|$.  For any given sequence $\{a_{n}\}_{n\ge 1}$ define $\{a^{\max}_{n}\}_{n\ge 1}$ as the sequence of running maxima $a^{\max}_{n}:= \max_{1\le k\le n} a_{k}$. 

For a Polish space $S$, let $\mathcal{B}(S)$ be its Borel $\sigma$-field and let $\mathcal{P}(S)$ denote the class of probability measures on $S$.
$\mathcal{P}(S)$ is equipped with the topology of weak convergence. For $x\in S$, $\delta_{x}\in \mathcal{P}(S)$ denotes the Dirac measure that puts unit mass at $x$. 
The probability distribution of an $S$-valued random variable $X$ will be denoted as $\mathcal{L}(X)$. $X\sim \mu$ means that $\mu\in\mathcal{P}(S)$ and $\mu=\mathcal{L}(X)$.
Convergence in distribution of an $S$-valued sequence $(X_{n})_{n\geq 1}$ to an $S$-valued random variable $X$ will be written as $X_{n}\stackrel{d}{\to} X,$ or $\mathcal{L}(X_{n})\stackrel{w}{\to} \mathcal{L}(X)$, where $w$ stands for weak convergence. 

The transition kernels of a Markov process are defined as the maps $P_{s,t} : (S,\mathcal{B}(S))\to [0,1]$ such that for all $t\ge s\ge 0,$ $P_{s,t}(\cdot,A)$ is $\mathcal{B}(S)$-measurable for each $A\in \mathcal{B}(S)$ and $P_{s,t}(i,\cdot)\in \mathcal{P}(S)$ for each $i\in S$. 
The distribution of the Markov process is determined by the transition kernels $P_{s,t}$ together with the initial distribution $\nu_0$. The marginal distribution of the Markov process at time $t$ is $\nu_0 P_{0,t}(\cdot)=\int_S P_{0,t}(x,\cdot)\nu_0(dx)$. 
We will consider only time-homogeneous Markov processes corresponding to transition kernels satisfying $P_{s,t}=P_{0,t-s}$ and use the notation $P_t:=P_{0,t}$.  
$P_{t}f(\cdot)$ is the corresponding transition operator given by $P_{t}f(x)=\int_S f(y)P_{t}(x,dy)$ for functions $f:(S,\mathcal{B}(S))\to (\mathbb{R},\mathcal{B}(\mathbb{R}))$.
A time-homogeneous Markov process in a countable state space $S$ is irreducible if for any $i,j\in S,$ $P_{t}(i,j):=P_{t}(i,\{j\})>0$ for some $t>0.$ A time-homogeneous Markov process with transition kernels $P_{t}$ has a stationary distribution (or invariant law) $\mu \in \mathcal{P}(S)$ if $\mu P_{t} = \mu$ holds for all $t>0$. 

Consider a probability measure $\pi$ on a countable set $S$ such that $\pi(A)=\sum_{i\in A}\pi_{i}$ for any $A\in \mathcal{B}(S)$ and a set of probability measures $\{\mu_j:j\in S\}$. Then  
$$
\sum_{j\in S} \delta_{U}(\{j\})Z_{j}, \quad U \indep (Z_j)_{j\in S}, \quad U\sim \pi, \quad Z_j\sim \mu_j,
$$
is a random variable whose distribution is the mixture distribution $\sum_{j\in S} \pi_{j}\mu_{j}$.
For a given bivariate random variable $(A,B)$, the following time series is referred to as a stochastic recurrence equation (in short SRE, also referred to as random coefficient AR$(1)$) 
\beqn
Z_{n+1}=A_{n+1}Z_{n}+B_{n+1},\s\text{ with }\s (A_{i},B_{i})\stackrel{\text{i.i.d}}{\sim} \mathcal{L}(A,B),\s \,\, Z_{n}\indep (A_{n+1},B_{n+1})\label{stochrec}
\eeqn 
for an arbitrary initial value $Z_{0}=z_{0}\in \R$. Let $\log^{+}|a|:=\log(\max(|a|,1))$.  
If 
\beqn
P[A=0]=0, \quad E\log|A| <0,\s\text{and}\s E\log^{+}|B|<\infty, \label{sre_conv_conds}
\eeqn
then $(Z_{n})$ has a unique causal ergodic strictly stationary solution solving the following fixed-point equation in law: 
\beqn
Z\mathop{=}^{d}AZ+B\s\text{ with} \s Z\indep (A,B).\label{x=ax+b}
\eeqn
The condition $P[Ax+B=x]<1,$ for all $x\in \mathbb{R}$, rules out degenerate solutions $Z=x$ a.s. We refer to Corollary 2.1.2 and Theorem 2.1.3 in \cite{buraczewski2016stochastic} for further details.   

We denote by $\normal(\mu,\sigma^2)$ and $\exponential(\lambda)$, respectively, the Normal distribution with mean $\mu$ and variance $\sigma^2$ and the Exponential distribution with mean $1/\lambda>0$.

Throughout the rest of this paper we will assume the following:

\bas\label{as1} $S$ is a countable set and the $S$-valued Markov process $X:=(X_{t})_{t\ge 0}$ satisfies \eqref{model2}.
\begin{enumerate}[(a)]
\item For every $(i,j) \in S^2$, the rate function $\lambda_{ij}(\cdot)$ in \eqref{model2} is constant with respect to its argument.
\item $X$ is ergodic and irreducible in $S$ with the stationary distribution $\pi:=\{\pi_{j}:j\in S\}.$
\end{enumerate}
\eas

Assumption \ref{as1}(a) is referred to as the hidden Markovian environment assumption. It follows from Assumptions \ref{as1}(b) that $X$ is positive recurrent. 
Fix a state $j\in S.$ Let $\tau^{j}_{0}$ be the first time instant $X$ hits state $j$ and stays there for $T^{j}_{0}$ time. One can recursively define, for $k\ge 1$,
\begin{align}
\tau^{j}_{k}:=\inf\big\{t>\tau^{j}_{k-1}+T^{j}_{k-1}: X_{t}=j\big\},\quad 
T^{j}_{k}:=\inf\big\{t>\tau^{j}_{k}:X_{t}\neq j\big\}-\tau^{j}_{k}, \quad
I^{j}_{k}:= (\tau^{j}_{k-1},\tau^{j}_{k}]. \label{tau}
\end{align}
$T^j_k$ is the time $X$ spends in state $j$ after hitting state $j$ at time $\tau^{j}_{k}$. $(T^{j}_{k})_{k\ge 0}$ is an i.i.d. sequence with $\exponential(-\lambda_{jj})$-distributed terms.
The renewal cycle lengths form an i.i.d. sequence $(|I^{j}_{k}|)_{k\ge 1}$.
A consequence of positive recurrence of $X$ is that $E|I^{j}_{k}|<\infty$ for any $(j,k)\in (S,\N)$. 
Let 
\begin{align}
g_{t}^{j}:=\max\big(\sup\{n\in\mathbb{N}:\tau^{j}_{n}\le t\},0\big), \quad \sup\emptyset:=-\infty, \label{g_{t}}
\end{align}
i.e. the number of times the chain $X$ revisits the state $j$ before time $t$. Positive recurrence of $X$ implies that $g_{t}^{j}\stackrel{\text{a.s.}}{\to}\infty$ as $t\to\infty$.

\section{The stable regime}\label{sec_stable}  

In this section we study long time behavior of the joint process $(Y,X):=(Y_t,X_t)_{t\ge 0}$ and processes defined in terms of certain functionals of $(X_t)_{t\ge 0}$ under conditions ensuring that convergence in distribution holds as $t\to\infty$. 
Together with Assumption \ref{as1}, the following assumption ensures the existence of a stationary distribution for $(Y,X)$:  
\bas\label{as2} The $S$-valued process $X$ and the functions $a,b:S\to\R$ satisfy 
\begin{enumerate}[(a)]
\item $a$ is integrable with respect to $\pi,$ and $E_{\pi} a(\cdot)>0.$
\item For every $j\in S$, $E\Big[\log^{+}\int_{\tau^{j}_{0}}^{\tau^{j}_{1}}b^{2}(X_{s})e^{-2\int_{s}^{\tau^{j}_{1}}a(X_{r})dr}ds\Big]<\infty.$
\end{enumerate}
\eas

\begin{remark}\label{rem1} 
Assumption \ref{as2} correspond, in the current setting, to the general condition \eqref{sre_conv_conds} for existence of a stationary solution to the stochastic recurrence equation
$$
Z_{j,n+1}=e^{-\int_{\tau^{j}_{n}}^{\tau^{j}_{n+1}}a(X_{s})ds}Z_{j,n}+\int_{\tau^{j}_{n}}^{\tau^{j}_{n+1}}b^{2}(X_{s})e^{-2\int_{s}^{\tau^{j}_{n+1}}a(X_{r})dr}ds
$$ 
with affine invariant solution of the form 
$$
Z_{j}\stackrel{d}{=}e^{-\int_{\tau^{j}_{0}}^{\tau^{j}_{1}}a(X_{s})ds}Z_{j}+\int_{\tau^{j}_{0}}^{\tau^{j}_{1}}b^{2}(X_{s})e^{-2\int_{s}^{\tau^{j}_{1}}a(X_{r})dr}ds.
$$ 
If Assumption \ref{as2}(a) holds but not Assumption \ref{as2}(b), then results similar to Theorem 1.1 of \cite{Iksa2015div-contractive} hold. 
Notice that if $\sup_{j\in S} |a(j)|<\infty$ and $E_{\pi}b^{2}(\cdot)<\infty$, then Assumption \ref{as2}(b) follows immediately from  Assumption \ref{as1}(b) as a consequence of the inequalities $\log^{+}|ab|\le \log^{+}|a|+\log^{+}|b|$ and $\log^{+}|a|\le |a|$ for any $a,b$.
\end{remark}

An explicit expression for the stationary distribution of the joint process $(Y,X)$ is the following.

\begin{Theorem}\label{P1}Under Assumptions \ref{as1} and \ref{as2} the stationary distribution of the joint process $(Y,X)$ can be expressed as a scale mixture of Gaussians of the following form
\beqn
(Y_{t},X_{t}) \stackrel{d}{\to} \Big(\sum_{j\in S}\delta_U(\{j\})Z_j,U\Big)\quad \text{as } t\to\infty \label{P1e1}
\eeqn
where $U\indep (Z_j)_{j\in S}$, $U\sim \pi$, $Z_j\stackrel{d}{=}\sqrt{V_j}N$, $V_j\indep N$, $N\sim\normal(0,1)$ and 
$$
V_j\stackrel{d}{=}
b^2(j)\int_0^{T^j}e^{-2a(j)(T^{j}-s)}ds+e^{-2a(j)T^{j}}V^{*}_j,
$$
where $T^{j}\sim \exponential(-\lambda_{jj})$ is independent of $V^{*}_j$, and 
$\mathcal{L}(V^{*}_j)$ is the unique solution to \eqref{x=ax+b} with $(A,B)$ having the distribution of 
\begin{align*}
\Big(e^{-2\int_{\tau^{j}_{0}}^{\tau_{1}^{j}}a(X_{s})ds},
\int_{\tau^{j}_{0}}^{\tau_{1}^{j}}b^{2}(X_{s})e^{-2\int_{s}^{\tau^{j}_{1}}a(X_{r})dr}ds\Big).
\end{align*}
\end{Theorem}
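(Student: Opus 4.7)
The plan is to solve \eqref{model1} explicitly conditionally on the path of $X$, analyze the resulting (Gaussian) conditional distribution of $Y_t$, and extract the limit of its variance by partitioning time according to the renewal structure of $X$ at a fixed state $j$. Since $W\indep X$ and the SDE is linear, variation of parameters yields
\begin{equation*}
Y_t = e^{-\int_0^t a(X_r)\,dr}\,y_0 + \int_0^t b(X_s)\,e^{-\int_s^t a(X_r)\,dr}\,dW_s,
\end{equation*}
so that conditional on $X$, $Y_t\sim\normal(m_t,V_t)$ with $m_t=e^{-\int_0^t a(X_r)dr}y_0$ and $V_t=\int_0^t b^2(X_s)e^{-2\int_s^t a(X_r)dr}ds$. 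By the ergodic theorem and Assumption \ref{as2}(a), $t^{-1}\int_0^t a(X_r)dr\to E_\pi a>0$ a.s., whence $m_t\to 0$ a.s. The whole problem thus reduces to identifying $\mathcal{L}(V_t\mid X_t=j)$ in the limit for each $j\in S$.

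Fix $j\in S$. On $\{X_t=j\}$, $X$ is identically $j$ on $[\tau^j_{g^j_t},t]$, so setting $U_t:=t-\tau^j_{g^j_t}$ we can split
\begin{equation*}
V_t \;=\; b^2(j)\!\int_0^{U_t}\! e^{-2a(j)(U_t-u)}du \;+\; e^{-2a(j)U_t}\,W_t,\qquad W_t:=\int_0^{\tau^j_{g^j_t}}\! b^2(X_s)\,e^{-2\int_s^{\tau^j_{g^j_t}} a(X_r)dr}ds.
\end{equation*}
The second step is to decompose $W_t$ on the cycles $I^j_k$ defined in \eqref{tau}. Writing $\rho_k:=e^{-2\int_{\tau^j_{k-1}}^{\tau^j_k}a(X_s)ds}$ and $\xi_k:=\int_{\tau^j_{k-1}}^{\tau^j_k}b^2(X_s)e^{-2\int_s^{\tau^j_k}a(X_r)dr}ds$, which are i.i.d. by the strong Markov / regenerative property at $j$, telescoping the exponentials gives $W_t=\widetilde V^{(g^j_t)}+\varepsilon_t$ with $\widetilde V^{(n)}=\sum_{k=1}^n\xi_k\prod_{l=k+1}^n\rho_l$, and $\varepsilon_t$ the contribution of $[0,\tau^j_0]$. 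The sequence $\widetilde V^{(n)}$ obeys the SRE $\widetilde V^{(n+1)}=\rho_{n+1}\widetilde V^{(n)}+\xi_{n+1}$. The conditions in \eqref{sre_conv_conds} are met: $\rho_1>0$, and by the elementary renewal theorem $E\log\rho_1=-2\,E|I^j_1|\,E_\pi a<0$ by Assumption \ref{as2}(a), while $E\log^+\xi_1<\infty$ is exactly Assumption \ref{as2}(b). Hence $\widetilde V^{(n)}\stackrel{d}{\to}V^*_j$, the unique solution to \eqref{x=ax+b} with the stated $(A,B)$; since $g^j_t\stackrel{\mathrm{a.s.}}{\to}\infty$ and $\varepsilon_t$ inherits an additional multiplicative factor $\prod_{l=1}^{g^j_t}\rho_l\to 0$ from the contractivity $E\log\rho_1<0$, we get $W_t\stackrel{d}{\to}V^*_j$ conditional on $\{X_t=j\}$.

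Next, conditional on $\{X_t=j\}$, memorylessness of the $\exponential(-\lambda_{jj})$ sojourn time in state $j$ combined with the Markov property at $\tau^j_{g^j_t}$ yields $U_t\stackrel{d}{\to}T^j\sim\exponential(-\lambda_{jj})$ and, moreover, asymptotic independence of $U_t$ and $W_t$: $W_t$ is a functional of $X|_{[0,\tau^j_{g^j_t}]}$ whereas $U_t$ is a functional of $X|_{[\tau^j_{g^j_t},t]}$, and the two are conditionally independent given $X_{\tau^j_{g^j_t}}=j$. Feeding this into the decomposition of $V_t$ and invoking continuity of the map $(u,w)\mapsto b^2(j)\int_0^u e^{-2a(j)(u-s)}ds+e^{-2a(j)u}w$ gives $V_t\stackrel{d}{\to}V_j$ with $V_j$ as in the statement. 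Since $\mathcal L(Y_t\mid X)=\normal(m_t,V_t)$ and $m_t\to 0$, we get $\mathcal L(Y_t\mid X_t=j)\stackrel{w}{\to}\mathcal L(\sqrt{V_j}N)$ via the conditional characteristic function $E[\exp(i\theta Y_t)\mid X]=\exp(i\theta m_t-\tfrac12\theta^2 V_t)$. Combining with $P(X_t=j)\to\pi_j$ produces the scale-mixture representation \eqref{P1e1}.

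The main obstacle is Step~3, specifically justifying that (i) the random-index convergence $\widetilde V^{(g^j_t)}\stackrel{d}{\to}V^*_j$ can be combined with the asymptotic independence of $U_t$ (which involves the Markovian splitting at the random time $\tau^j_{g^j_t}$, a stopping time for $X$ but not for the reversed filtration), and (ii) the residual $\varepsilon_t$ is truly negligible; both rest on the contractive behavior induced by $E\log\rho_1<0$, which allows a Borel–Cantelli argument to absorb initial-condition effects, together with Slutsky-type bookkeeping to pass from the marginal conditional limit to the joint limit in \eqref{P1e1}.
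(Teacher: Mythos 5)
Your overall route is the same as the paper's: solve \eqref{model1} by variation of parameters, observe that conditionally on $X$ the law of $Y_t$ is $\normal(m_t,V_t)$, decompose $V_t$ over the renewal cycles at a fixed state $j$, identify the limit of the cycle sum as the solution of \eqref{x=ax+b}, and use memorylessness of the final sojourn to produce the factor $T^j\sim\exponential(-\lambda_{jj})$. Your decomposition of $W_t$ into $\widetilde V^{(g^j_t)}+\varepsilon_t$ with $\widetilde V^{(n+1)}=\rho_{n+1}\widetilde V^{(n)}+\xi_{n+1}$ is exactly the content of the paper's Lemma \ref{lem1}, and your handling of $m_t$, of $\varepsilon_t$, and of the mixture over $\pi$ matches the paper's.

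The gap sits precisely where you flag ``the main obstacle,'' and the fix you propose does not close it. The conditions \eqref{sre_conv_conds} give $\widetilde V^{(n)}\stackrel{d}{\to}V^*_j$ only along deterministic $n$; the backward perpetuity $\widetilde V^{(n)}$ does \emph{not} converge almost surely (only the order-reversed forward series $S_n=\sum_{k=1}^{n}\xi_k\prod_{l=1}^{k-1}\rho_l$ does), so ``$g^j_t\to\infty$ a.s.'' plus Borel--Cantelli and Slutsky cannot yield $\widetilde V^{(g^j_t)}\stackrel{d}{\to}V^*_j$, the more so because the random index $g^j_t$ is built from the very excursions that produce the summands. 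The paper's device is the exchangeability identity $\widetilde V^{(g^j_t)}\stackrel{d}{=}S_{g^j_t}$ (Step 1 in the proof of Lemma \ref{lem2}), which transfers the problem to the a.s.\ convergent forward series, combined with an Anscombe-type condition $R_t-R_{t-\varepsilon(t)}\stackrel{P}{\to}0$ (Lemma \ref{lem1.5}, verified in Subsection \ref{verification2.01}); the latter is also what legitimizes passing the limit through the conditioning on $\{X_t=j\}$. Relatedly, your appeal to the Markov property ``at $\tau^j_{g^j_t}$'' to decouple $U_t$ from $W_t$ is not directly available: $\tau^j_{g^j_t}$ is a last-entrance time before $t$, hence not a stopping time (identifying it requires knowledge of the path up to $t$), contrary to your parenthetical. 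The paper circumvents this with the exact event identity $\{X_t=j,\ t-\tau^j_{g^j_t}>x,\ R_t\in A\}=\{B_{t,x},\ X_{t-x}=j,\ R_{t-x}\in A\}$, where $B_{t,x}$ is the event that $X$ makes no jump on $(t-x,t]$, reducing everything to the ordinary Markov property at the deterministic time $t-x$. Without these two ingredients the argument does not go through as written.
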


\begin{remark}
Theorem \ref{P1} may be generalized by instead of the model \eqref{model1} considering the more general Ornstein-Uhlenbeck model
\beqn
dY_{t}= \big(c(X_{t})- a(X_{t})Y_{t}\big)dt + b(X_{t})dW_{t}, \s Y_{0}=y\in{\R},\label{model12}
\eeqn 
for an arbitrary function $c:S\to\R.$ The stationary distribution of $(Y,X)$ can be determined under the assumptions of Theorem \ref{P1} and the additional assumption (ensured by $E_{\pi}|c(\cdot)|<\infty$): 
$$
E_{} \log^{+}\int_{\tau_{0}^{j}}^{\tau_{1}^{j}}|c(X_{s})|e^{-2\int_{s}^{\tau_{1}^{j}}a(X_{r})dr}ds<\infty
\quad\text{for all } j\in S.
$$ 
The stationary distribution is given by  
\begin{align*}
(Y_{t},X_{t}) \stackrel{d}{\to} \Big(\sum_{j\in S}\delta_U(\{j\})Z_j,U\Big)\quad \text{as } t\to\infty, 
\end{align*}
where $U\indep (Z_j)_{j\in S}$, $U\sim \pi$, $Z_j\stackrel{d}{=}M_j+\sqrt{V_j}N$, $(M_j,V_j)\indep N$, 
$N\sim\normal(0,1)$ and 
\begin{align*}
\begin{pmatrix} M_{j} \\ V_{j} \end{pmatrix}
\stackrel{d}{=} 
\begin{pmatrix} 
c(j)\int_{0}^{T^{j}}e^{-a(j)(T^{j}-s)}ds+e^{-a(j)T^{j}}M^{*}_{j} \\ 
b^{2}(j)\int_{0}^{T^{j}}e^{-2a(j)(T^{j}-s)}ds+e^{-2a(j)T^{j}}V^{*}_{j} 
\end{pmatrix},
\end{align*}
where $T^{j}\sim\exponential(-\lambda_{jj})$ is independent of $\big(M^{*}_j,V^{*}_j\big)$ and $\mathcal{L}\big(M^{*}_j,V^{*}_j\big)$ is the unique solution to 
\beqn 
\begin{pmatrix} M^{*}_{j} \\ V^{*}_{j} 
\end{pmatrix}
\stackrel{d}{=} 
\left[{\begin{array}{cc}
\sqrt{A_{j}} & 0 \\
0 &  A_{j}\\
\end{array}}\right]
\begin{pmatrix} M^{*}_{j} \\ V^{*}_{j} \end{pmatrix}
+\begin{pmatrix} C_{j} \\ B_{j} \end{pmatrix},
\quad
(M^{*}_j,V^{*}_j) \indep (A_j,B_j,C_j),
\label{P1e2}
\eeqn
with $(A_j,B_j,C_j)$ having the distribution of
\begin{align*}
\bigg(e^{-\int_{\tau^{j}_{0}}^{\tau_{1}^{j}}2a(X_{s})ds},\int_{\tau^{j}_{0}}^{\tau_{1}^{j}}b^{2}(X_{s})e^{-2\int_{s}^{\tau_{1}^{j}}a(X_{r})dr}ds,\int_{\tau^{j}_{0}}^{\tau_{1}^{j}}c(X_{s})e^{-\int_{s}^{\tau_{1}^{j}}a(X_{r})dr}ds\bigg)
\end{align*}
with notations used in Theorem \ref{P1}. This generalization of Theorem \ref{P1} follows from from expressing $Y$ in \eqref{model12} as
\begin{align}
Y_{t}=Y_{0}e^{-\int_{0}^{t}a(X_{r})dr}+\int_{0}^{t}c(X_{s})\big(e^{-\int_{s}^{t}a(X_{r})dr}\big)ds
+\int_{0}^{t}b(X_{s})e^{-\int_{s}^{t}a(X_{r})dr}dW_s. \label{ito2}
\end{align}
The characterization \eqref{P1e2} follows along the lines of the proof of Theorem \ref{P1} by modifying the proof of Lemma \ref{lem1} by determining the weak limit of 
$$
\bigg(e^{-\int_{0}^{t}a(X_{r})dr},\int_{0}^{t}c(X_{s})e^{-\int_{s}^{t}a(X_{r})dr}ds,\int_{0}^{t}b^{2}(X_{s})e^{-2\int_{s}^{t}a(X_{r})dr}ds\bigg).
$$ 
\end{remark}

\begin{remark}\label{Levy} 
Theorem \ref{P1} can be extended further by replacing the standard Brownian motion $W$
by an arbitrary L\'evy process $L$ in \eqref{model1}:
\beqn
dY_{t}= - a(X_{t})Y_{t}dt + b(X_{t})dL_{t}, \s Y_{0}=y\in\R. \label{levy}
\eeqn
Exact long time behavior can be determined from the expression
$$
Y_{t}=Y_{0}e^{-\int_{0}^{t}a(X_{r})dr}+\int_{0}^{t}b(X_{s})e^{-\int_{s}^{t}a(X_{r})dr}dL_{s}
$$ 
under the same assumptions as in Theorem \ref{P1} except that Assumption \ref{as2}(b) is replaced by 
$$
E \log^{+}\Big|\int_{\tau^{j}_{0}}^{\tau_{1}^{j}}b(X_{s})e^{-\int_{s}^{\tau_{1}^{j}}a(X_{r})dr}dL_{s}\Big|<\infty
\quad \text{for all } j\in S.
$$
The stationary distribution can be expressed as
$$
(Y_{t},X_{t}) \stackrel{d}{\to} \Big(\sum_{j\in S}\delta_U(\{j\})Z_j,U\Big)\quad \text{as } t\to\infty
$$
where $U\indep (Z_j)_{j\in S}$ and 
$$
Z_j\stackrel{d}{=}b(j)\int_{0}^{T^{j}}e^{-a(j)(T^{j}-s)}dL_{s}+e^{-a(j)T^{j}}Z^{*}_j,
$$ 
where
$T^{j}\sim \exponential(-\lambda_{jj})$, $L$ and $Z^{*}_j$ are independent, and 
$\mathcal{L}(Z^{*}_{j})$ is the unique solution to \eqref{x=ax+b} with $(A,B)$ having the distribution of 
\beqn
\bigg(e^{-\int_{\tau^{j}_{0}}^{\tau_{1}^{j}}a(X_{s})ds},\int_{\tau^{j}_{0}}^{\tau_{1}^{j}}b(X_{s})e^{-\int_{s}^{\tau_{1}^{j}}a(X_{r})dr}dL_s\bigg).\label{P1e22}
\eeqn
The special case $L=W$ corresponds to $Z_j\stackrel{d}{=}\sqrt{V_j}N$ with $(V_j,N)$ as in Theorem \ref{P1}. 
\end{remark}

\begin{remark}
Theorem \ref{P1} together with Mill's ratio inequalities yield tail bounds for the stationary distribution $\mathcal{L}(Y_{\infty})$, writing $Y_t\stackrel{d}{\to}Y_{\infty}$ as $t\to\infty$ for the marginal convergence in \eqref{P1e1}. With $\mu_j:=\mathcal{L}(V_j)$, 
$$
\sum_{j\in S}\pi_{j}\int_{\R^{+}}\frac{t\phi(t/\sqrt{\sigma})}{1+t^{2}}\mu_{j}(d\sigma) \le P[Y_{\infty}>t]
\le \sum_{j\in S}\pi_{j}\int_{\R^{+}}\frac{\phi(t/\sqrt{\sigma})}{t}\mu_{j}(d\sigma).
$$
Sharper versions of the Mill's ratio inequalities, see e.g. \cite{fan2012new}, yield sharper bounds.
\end{remark}

\begin{remark}
Moments for the stationary distribution of $Y$ in \eqref{levy} can be computed recursively using the representation for $(A_j,B_j)$ in \eqref{P1e22}. From $Z_j\stackrel{d}{=}A_jZ_j+B_j$ follows that, for $m\in\mathbb{N}$,  
$$
Z_j^m(1-A_j^m)=\sum_{k=0}^{m-1}{m \choose k}A_j^kB_j^{m-k}Z_j^k.
$$
If there exists $n\in\mathbb{N}$ such that $E|Z_j|^{n}<\infty$, then $EA_j^{m}<\infty$ and $EA_j^kB_j^{m-k}<\infty$ for $0\le k\le m\le n$, and independence between $Z_j$ and $(A_j,B_j)$ gives 
$$
EZ_j^{m}=\frac{1}{1-EA^{m}_{j}}\sum_{k=0}^{m-1}{m\choose k}E\big[A^{k}_{j}B^{m-k}_{j}\big]EZ^{k}_{j}.
$$
From the representation for the limit distribution follows that $EY^m=\sum_{j\in S}\pi_jEZ_j^m$.
\end{remark}

\begin{remark}\label{rem2} 
Theorem \ref{P1} can be generalized by allowing $Y$ to be a vector valued Ornstein Uhlenbeck process. 
In that case, when both drift and diffusion functions $a(\cdot), b(\cdot)$ are matrix valued functions of hidden Markov process $X$, stability conditions will change in a nontrivial way which require careful analysis. 

Theorem \ref{P1} and the methodology used for proving the theorem can be extended to general regime switching dynamics that is marginal of a Markov renewal process (also known as semi-Markov process), where in every regime $j\in S$ the regime process spends a random time distributed as $H^{j}$ with finite mean that is not Exponentially distributed.  
Since a semi-Markov process is in general non-Markovian, instead of a stationary distribution one should use the similar notion of a limiting distribution for investigating exact long time behavior. Allowing $H^{j}$ to have infinite mean would make the analysis substantially more complicated.
\end{remark}

In many applications integrals of the form 
\beqn
F_t:=\int_{0}^{t}d(X_{s})e^{-\int_{s}^{t}c(X_{r})dr}ds\label{It_exp_func}
\eeqn
appear for functions $c,d: S\to \R$ and $X$ being a regime process satisfying Assumption \ref{as1}. The following corollary addresses the long time behavior for $F_t$ under the stability regime $E_{\pi}c(\cdot)>0$ and suitable integral property of $d(\cdot)$ in form of the following assumptions.

\bas\label{as22} The $S$-valued process $X$ and the functions $c,d:S\to\R$ satisfy 
\begin{enumerate}[(a)]
\item $c$ is integrable with respect to $\pi,$ and $E_{\pi}c(\cdot)>0.$
\item For every $j\in S$, $E\Big[\log^{+}\Big|\int_{\tau^{j}_{0}}^{\tau_{1}^{j}}d^{}\big(X_{s}\big)e^{-\int_{s}^{\tau_{1}^{j}}c(X_{r})dr}ds\Big|\Big]<\infty.$
\end{enumerate}
\eas

The difference between Assumption \ref{as2} and Assumption \ref{as22} is that in the latter the function $d$ can take negative values in contrast to only positive values for $b^{2}$ appearing in Assumption \ref{as2}. 

\begin{Corollary}\label{Cor1}
Under Assumptions \ref{as1} and \ref{as22}, $F_t$ in \eqref{It_exp_func} satisfies 
\begin{align*}
F_t \,\stackrel{d}{\to} \, \sum_{j\in S}\delta_U(\{j\})V_j \quad \text{as } t\to\infty,
\end{align*}
where $U\indep (V_j)_{j\in S}$ and 
$$
V_j\stackrel{d}{=}d(j)\int_0^{T^{j}}e^{-c(j)(T^{j}-s)}ds+e^{-c(j)T_{j}}V^{*}_j,
$$
where $T^{j}\sim \exponential(-\lambda_{jj})$ is independent of $V^{*}_j$, and 
$\mathcal{L}(V^{*}_j)$ is the unique solution to \eqref{x=ax+b} with $(A,B)$ having the distribution of 
\begin{align}
\bigg(e^{-\int_{\tau^{j}_{0}}^{\tau_{1}^{j}}c(X_{s})ds},
\int_{\tau^{j}_{0}}^{\tau_{1}^{j}}d^{}(X_{s})e^{-\int_{s}^{\tau_{1}^{j}}c(X_{r})dr}ds\bigg).\label{MP1e21cd}
\end{align}
\end{Corollary}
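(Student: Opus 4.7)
Plan: The corollary describes exactly the limit that is built into the proof of Theorem \ref{P1} for the quadratic-variation integral $\int_{0}^{t}b^{2}(X_{s})e^{-2\int_{s}^{t}a(X_{r})dr}ds$, with the pair $(b^{2},2a)$ replaced by $(d,c)$. The SRE existence statement recalled after \eqref{sre_conv_conds} does not require $B$ to be nonnegative, so the change in sign is immaterial. My strategy is therefore to reuse the argument of Theorem \ref{P1} with the obvious substitutions, being careful to check the three preconditions for the new $(A_{j},B_{j})$ in \eqref{MP1e21cd}.

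First I would verify \eqref{sre_conv_conds} for $(A_{j},B_{j})$ in \eqref{MP1e21cd}: $P[A_{j}=0]=0$ is immediate; $E\log^{+}|B_{j}|<\infty$ is exactly Assumption \ref{as22}(b); and $E\log|A_{j}|=-E\int_{\tau_{0}^{j}}^{\tau_{1}^{j}}c(X_{s})ds<0$ follows from Assumption \ref{as22}(a) together with the renewal-cycle identity $E\int_{\tau_{0}^{j}}^{\tau_{1}^{j}}c(X_{s})ds=E|I_{1}^{j}|\cdot E_{\pi}c(\cdot)>0$, a consequence of positive recurrence and the ergodic theorem for semi-Markov processes. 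The nondegeneracy $P[A_{j}x+B_{j}=x]<1$ is clear since $A_{j}<1$ a.s. whenever $c$ is not identically zero on the support of $X$. This produces the unique stationary solution $V_{j}^{*}$ to \eqref{x=ax+b}.

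Next I would decompose $F_{t}$ on the event $\{X_{t}=j\}$. Let $A_{t}^{j}:=t-\sup\{s\le t:X_{s^{-}}\neq j,\,X_{s}=j\}$ denote the age of $X$ in state $j$ at time $t$, so $X_{r}=j$ for all $r\in(t-A_{t}^{j},t]$. Splitting the integral at $t-A_{t}^{j}$ and using $\int_{t-A_{t}^{j}}^{t}c(X_{r})dr=c(j)A_{t}^{j}$ yields
\[
F_{t}\;=\;d(j)\int_{0}^{A_{t}^{j}}e^{-c(j)(A_{t}^{j}-s)}ds\;+\;e^{-c(j)A_{t}^{j}}H_{t},\qquad H_{t}:=\int_{0}^{t-A_{t}^{j}}d(X_{s})e^{-\int_{s}^{t-A_{t}^{j}}c(X_{r})dr}ds.
\]
Because $t-A_{t}^{j}=\tau_{n_{t}}^{j}$ for some random index $n_{t}$, and modulo a contribution from $[0,\tau_{0}^{j}]$ that is absorbed into the asymptotically vanishing exponential prefactor, one has $H_{t}=G_{n_{t}}$, where $G_{n}:=\int_{\tau_{0}^{j}}^{\tau_{n}^{j}}d(X_{s})e^{-\int_{s}^{\tau_{n}^{j}}c(X_{r})dr}ds$ satisfies the scalar SRE $G_{n+1}=A_{n+1}^{(j)}G_{n}+B_{n+1}^{(j)}$ with i.i.d.\ cycle coefficients distributed as in \eqref{MP1e21cd}. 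Positive recurrence forces $n_{t}\stackrel{\text{a.s.}}{\to}\infty$, so the first step gives $H_{t}\stackrel{d}{\to}V_{j}^{*}$.

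The remaining and principal step is to upgrade this to the joint limit $(A_{t}^{j},H_{t})\mid X_{t}=j\stackrel{d}{\to}(T^{j},V_{j}^{*})$ with independent components. Here I would appeal to the strong Markov property at the hitting time $t-A_{t}^{j}$: conditional on $X_{t-A_{t}^{j}}=j$, the forward sojourn length at $j$ is $\exponential(-\lambda_{jj})$ and independent of everything happening before $t-A_{t}^{j}$, and under stationarity the backward age $A_{t}^{j}\mid X_{t}=j$ inherits the same $\exponential(-\lambda_{jj})$ law by the standard reversibility of memoryless holding times. This asymptotic independence is the main technical content of the proof; once it is in hand, continuous mapping gives $F_{t}\mid X_{t}=j\stackrel{d}{\to}V_{j}$, and the convergence $P[X_{t}=j]\to\pi_{j}$ from Assumption \ref{as1}(b) combines this into the mixture representation claimed in the corollary.
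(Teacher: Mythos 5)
Your overall route is the same as the paper's: decompose $F_t$ on $\{X_t=j\}$ at the start of the current sojourn in $j$, identify the renewal-cycle sum with a stochastic recurrence equation whose coefficients are \eqref{MP1e21cd}, establish asymptotic independence of the residual age and the cycle sum, and mix over $\pi$. Your verification of \eqref{sre_conv_conds} for the new $(A_j,B_j)$ is correct and is exactly what Assumption \ref{as22} is designed to give. However, there is a genuine gap at the central step: you assert $H_t=G_{n_t}\stackrel{d}{\to}V_j^*$ ``because $n_t\to\infty$ a.s.'' The backward sums $G_n$ (the paper's $\widetilde{S}_n$) converge only \emph{in distribution}, not almost surely, and the renewal index $n_t=g^j_t$ is a function of the very cycle lengths $|I^j_k|$ that enter the coefficients, so the conditional law of $G_n$ given $\{g^j_t=n\}$ is not the unconditional law, and convergence along this random index does not follow from $n_t\to\infty$ alone. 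This is precisely the difficulty the paper flags as the main technical point and resolves in the proof of Lemma \ref{lem2} (Steps 1--4): the time-reversal identity $\widetilde{S}_{g_t}\stackrel{d}{=}S_{g_t}$, the a.s.\ convergence of the forward sums $S_n\to S_\infty$, the distributional shift-invariance \eqref{s2} of the post-$g_t$ cycles, and the Anscombe-type Lemma \ref{lem1.5} with the verification in Subsection \ref{verification2.01}. Your proof needs this (or an equivalent Anscombe-condition argument) to be complete.

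A secondary issue: your independence step invokes ``the strong Markov property at the hitting time $t-A^j_t$,'' but $t-A^j_t$ is a backward recurrence time, not a stopping time, and the sojourn interval straddling the fixed time $t$ is length-biased, so the claim that the age is $\exponential(-\lambda_{jj})$ and independent of the pre-sojourn history requires an argument rather than an appeal to reversibility. The paper does this cleanly by fixing $x$, writing $\{X_t=j,\ t-\tau^j_{g^j_t}>x,\ R_t\in A\}=\{B_{t,x},\ X_{t-x}=j,\ R_{t-x}\in A\}$ with $B_{t,x}$ the no-jump event on $(t-x,t]$, and applying the ordinary Markov property forward from time $t-x$ to obtain $P[B_{t,x}\mid X_{t-x}=j,\ R_{t-x}\in A]=e^{\lambda_{jj}x}$. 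Finally, the contribution of $[0,\tau^j_0]$ is not ``absorbed into the exponential prefactor'' (which converges to the nonvanishing $e^{-c(j)T^j}$); it is multiplied by the product $\prod_{k\le g^j_t}J^j_k$, and killing it requires first showing $\log|K^{(i,j)}_0|=O_{P}(1)$, as done in the paper's proof of \eqref{toprove3}.
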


\begin{remark}
The Goldie-Kesten theorem (Theorem 2.4.4 in \cite{buraczewski2016stochastic}) characterizes heavy-tailed behavior of the solution to the fixed-point equation \eqref{x=ax+b}. If $A\geq 0$ a.s. and $\mathcal{L}(\log A \mid A>0)$ is non-arithmetic, $P[Ax+B=x]<1$ for all $x\in\R$, and there exists $\nu>0$ such that 
$$
E A^{\nu}=1,\quad E|B|^{\nu}<\infty, \quad E A^{\nu}\log^+ A<\infty, 
$$
then there exists constants $c_+,c_-$ with $c_++c_->0$ such that 
$$
P[X>x]\sim c_+x^{-\nu}, \quad P[X<-x]\sim c_-x^{-\nu}\quad \text{as } x\to\infty.
$$
This result is applicable to the stationary distribution of $Y$ in Theorem \ref{P1} if $\inf_{j\in S}a(j)<0$. 
Let 
$$
\nu_{j}:=\sup\Big\{c>0: Ee^{-c\int_{\tau_{0}^{j}}^{\tau_{1}^{j}}a(X_{s})ds}<1\Big\}, \quad \nu^{*}:=\inf_{j\in S}\nu_j.
$$
If 
$$
\sup_{j\in S}\Big\{E\Big[\Big(\int_{\tau^{j}_{0}}^{\tau_{1}^{j}}b^{2}(X_{s})e^{-2\int_{s}^{\tau^{j}_{1}}a(X_{r})dr}ds\Big)^{\frac{\nu^{*}}{2}}\Big]\Big\}<\infty
$$
and
$$
\sup_{j\in S}\Big\{E\Big[e^{-\nu^{*}\int_{\tau_{0}^{j}}^{\tau_{1}^{j}}a(X_{s})ds}\log^+ e^{-2\int_{\tau_{0}^{j}}^{\tau_{1}^{j}}a(X_{s})ds}\Big]\Big\}<\infty,
$$
then the left and right tails of the symmetric distribution of 
$$
\sum_{j\in S}\delta_{U}(\{j\})\sqrt{V_j}N
$$ 
are regularly varying with index $\nu^{*}$. The statement follows since $V_j$ is a stochastic affine transformation of $V^{*}_j$ by random variables having finite moments of all orders, and since the standard normal distribution has finite moments of all orders. 
The indices $\nu_j$ can be estimated from the sample paths of $X$ through the empirical estimator 
$$
\widehat{\nu}_{n,j}:=\inf\Big\{c>0:\frac{1}{n}\sum_{i=1}^{n}e^{-c\int_{I^{j}_{i}}a(X_{s})ds}=1\Big\}, 
\quad \inf\emptyset:=+\infty, 
$$ 
of $\nu_{j}$ based on $n$ regenerating intervals $\{I_{i}^{j}\}_{i=1}^{n}.$ 
Therefore, $\nu^{*}$ may be estimated iteratively as the limit of  
$$
\widehat{\nu}^{*}_{n,{j_{k+1}}}:=\min\Big(\widehat{\nu}^{*}_{n,{j_{k}}},\inf\Big\{c\in \big(0,\widehat{\nu}^{*}_{n,{j_{k}}}\big):\frac{1}{n}\sum_{i=1}^{n}e^{-c\int_{I^{j}_{i}}a(X_{s})ds}=1\Big\}\Big), \quad k=1,\dots,|S|.
$$
Another representation for the tail index $\inf_{j\in S}\nu_j$ was presented in \cite{bardet2010long}, \cite{saporta2005tail} (with a spectral analysis) for finite state space $S$, in terms of the spectral radius of a certain matrix. 
\end{remark}

\section{Transient and null-recurrent regimes}\label{sec_unstable}

In this section we study the long time behavior of the process $Y=(Y_t)_{t\ge 0}$ and processes defined in terms of certain functionals of $(X_t)_{t\ge 0}$ under conditions different from Assumption \ref{as2} and Assumption \ref{as22}. In particular, it will be assumed that the stability condition $E_{\pi}a(\cdot)> 0$ in Assumption \ref{as2}(a) does not hold and that instead $E_{\pi}a(\cdot)\le 0$. By choosing a suitable Lyapunov function as done in \cite{jinghai2015criteria} it follows that $E_{\pi}a(\cdot)<0$ and $E_{\pi}a(\cdot)=0$ correspond to transience and null-recurrence, respectively, for the model \eqref{model1}.  

It can be shown that if Assumption \ref{as1} holds and the stability condition $E_{\pi}a(\cdot)> 0$ in Assumption \ref{as2}(a) is replaced by $E_{\pi}a(\cdot)<0$, then the long-term behavior of $Y$ will be determined by the first term in the representation 
\begin{align*}
Y_{t}=Y_{0}e^{-\int_{0}^{t}a(X_{r})dr}+\int_{0}^{t}b(X_{s})e^{-\int_{s}^{t}a(X_{r})dr}dW_{s}. 
\end{align*}
Consequently, the ergodic theorem gives 
$$
\frac{\log|Y_{t}|}{t}\stackrel{\text{a.s.}}{\to}-E_{\pi}a(\cdot)\quad \text{as } t\to\infty.
$$
However it is not well known how scaled fluctuation 
$$
\frac{\log|Y_{t}|}{\sqrt{t}}+\sqrt{t}E_{\pi}a(\cdot)
$$ 
behave for the model \eqref{model1} as $t\to \infty$, and how the regime switching dynamics play a role in that limit. This is the motivation behind the results of the present section.

\bas\label{as3}
The $S$-valued process $X$ and the functions $a,b:S\to\R$ satisfy 
\begin{enumerate}[(a)]
\item $a$ is integrable with respect to $\pi$, and $E_{\pi} a(\cdot)\le 0$.
\item For every $j\in S$, $\sigma_j^2:=\Var\Big(\int_{\tau_{0}^{j}}^{\tau_{1}^{j}}a(X_{s})ds\Big)<\infty$.
\item Assumption \ref{as2}(b) holds.
\end{enumerate}
\eas

\begin{remark} \label{R1T2}
Assumption \ref{as3}(b) is necessary for having the CLT type results in Theorem \ref{T2} below. 
For $|S|=2$ or for very simple cyclic Markov chains, e.g. $S=\{0,\dots,n\}$ and transitions $0\to1\to \dots\to n\to 0$, Assumption \ref{as3}(b) holds trivially but for more general cases we need to impose Assumption \ref{as3}(b). 
\end{remark}

\bt\label{T2}  
Suppose Assumptions \ref{as1} and \ref{as3} hold. 
Let $U\sim \pi$ and $N\sim \normal(0,1)$ be independent. 
\begin{enumerate}[(a)]
\item (\textbf{Transient regime}) 
If $E_{\pi}a(\cdot)<0$, then
\begin{align*}
\frac{\log |Y_{t}|}{\sqrt{t}}+\sqrt{t}E_{\pi}a(\cdot) 
\stackrel{d}{\to}
\sum_{j\in S} \delta_U(\{j\})\frac{\sigma_{j}}{\sqrt{E|I^{j}_1|}}N \quad \textrm{as } t\to\infty. 
\end{align*}
\item (\textbf{Null recurrent regime}) 
If $E_{\pi}a(\cdot)=0$, $b(\cdot)\neq 0$ and  
$$
E\Big[\Big(\log \int_{\tau_{0}^{j}}^{\tau_{1}^{j}}b^{2}\big(X_{s}\big)e^{-2\int_{s}^{\tau_{1}^{j}}a(X_{r})dr}ds\Big)^{2}\Big]<\infty,
$$ 
then 
\begin{align}
\frac{\log |Y_{t}|}{\sqrt{t}} \stackrel{d}{\to}
\sum_{j\in S} \delta_U(\{j\})\frac{\sigma_{j}}{\sqrt{E|I^{j}_1|}}|N| \quad \textrm{as } t\to\infty. \label{nullrec1}
\end{align}
\end{enumerate}
\et
In Theorem \ref{T2}(b), $|N|$ appears in the weak limit because the left-hand side in \eqref{nullrec1} asymptotically behaves as a scaled mixture of maxima of partial sums of random walks for which the long time behavior was characterized by Erd\"os and Kac in \cite{ErdosKac1946}.   

\begin{remark}
The result in Theorem \ref{T2}(a) does not depend on the diffusion function $b$. In fact, the result holds for any stochastic process $(Y_t)_{t\ge 0}$ such that 
$$
Y_t=\widetilde{Y}_t e^{-\int_{0}^{t}a(X_{r})dr}
\quad\text{where}\quad 
\widetilde{Y}_t^{\frac{1}{\sqrt{t}}}\stackrel{d}{\to}\delta_{1} \quad\text{as } t\to\infty.
$$
An example is 
$$
\widetilde{Y}_t=Y_{0}+\int_{0}^{t}b(X_{s})e^{\int_{0}^{s}a(X_{r})dr}dL_{s},
$$
for a L\'evy process $L$ together with an associated integrability condition. 
\end{remark}

For the exponential integral process $(F_t)_{t\ge 0}$ in \eqref{It_exp_func} results similar to Theorem \ref{T2} hold. Similar results are found in Theorem 2(a) and Theorem 3(a) of \cite{hitczenko2011renorming}.

\bas\label{as5}
The $S$-valued process $X$ and the functions $c,d:S\to\R$ satisfy 
\begin{enumerate}[(a)]
\item $c$ is integrable with respect to $\pi$, and $E_{\pi} c(\cdot)\le 0$.
\item For every $j\in S$, $\sigma_j^2:=\Var\Big(\int_{\tau_{0}^{j}}^{\tau_{1}^{j}}c(X_{s})ds\Big)<\infty$.
\item Assumption \ref{as22}(b) holds.
\end{enumerate}
\eas

\begin{Corollary}\label{Cor3}
Suppose that Assumptions \ref{as1} and \ref{as5} hold. 
Let $U\sim \pi$ and $N\sim \normal(0,1)$ be independent. 
\begin{enumerate}[(a)]
\item (\textbf{Transient regime}) If $E_{\pi}c(\cdot)<0$, then
\begin{align*}
\frac{\log |F_t|}{\sqrt{t}}+\sqrt{t}E_{\pi}\big(c(\cdot)\big) 
\stackrel{d}{\to} \sum_{j\in S}\delta_{U}(\{j\})\frac{\sigma_{j}}{\sqrt{E|I^{j}_1|}}N
\quad\text{as }t\to\infty. 
\end{align*}
\item (\textbf{Null recurrent regime}) If $E_{\pi}c(\cdot)=0$, $d\neq 0$ and 
$$
E\Big[\Big(\log^{+} \int_{\tau_{0}^{j}}^{\tau_{1}^{j}}d^{}\big(X_{s}\big)e^{-\int_{s}^{\tau_{k}^{j}}c(X_{r})dr}ds\Big)^{2}\Big]<\infty,
$$
then 
\begin{align*}
\frac{\log |F_t|}{\sqrt{t}}
\stackrel{d}{\to} \sum_{j\in S}\delta_{U}(\{j\})\frac{\sigma_{j}}{\sqrt{E|I^{j}_1|}}|N|
\quad\text{as } t\to\infty. 
\end{align*}
\end{enumerate}
\end{Corollary}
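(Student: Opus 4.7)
The plan is to mirror the proof of Theorem~\ref{T2} applied to the integral $F_t$ in place of the Ornstein--Uhlenbeck solution $Y_t$. Setting $H_t := \int_0^t c(X_r)\,dr$ and $G_t := \int_0^t d(X_s)\,e^{H_s}\,ds$, the elementary identity
\begin{align*}
F_t = e^{-H_t}\,G_t, \qquad \log|F_t| = -H_t + \log|G_t|,
\end{align*}
reduces the analysis to two ingredients: a central limit theorem for the additive functional $H_t$ of the regenerative Markov chain $X$, and a control of the size of the random factor $\log|G_t|$ on the scale $\sqrt{t}$. Both ingredients are exactly of the form already obtained in the OU case of Theorem~\ref{T2}, so the proof largely consists of transporting those estimates and noting that the logarithmic scale kills all sub-exponential contributions.

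For part (a), under $E_\pi c(\cdot)<0$ the random walk $H_s$ drifts to $-\infty$ at linear rate. Decomposing along the renewal cycles $I_k^{j}$ yields
\begin{align*}
G_t = \sum_{k=1}^{g_t^j} e^{H_{\tau_{k-1}^{j}}}\,\eta_k + R_t, \qquad \eta_k := \int_{I_k^{j}} d(X_s)\,e^{H_s - H_{\tau_{k-1}^{j}}}\,ds,
\end{align*}
where $(\eta_k)_{k\ge 1}$ is i.i.d.\ with $E\log^{+}|\eta_1|<\infty$ by Assumption~\ref{as5}(c) and the prefactors $e^{H_{\tau_{k-1}^{j}}}$ decay geometrically in $k$ by the SLLN for cycle sums. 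A standard Borel--Cantelli argument gives $G_t \to G_\infty$ almost surely to a finite limit, hence $\log|G_t|/\sqrt{t}\to 0$ in probability. The main term becomes $(\log|F_t|+tE_\pi c)/\sqrt{t} = -(H_t - tE_\pi c)/\sqrt{t} + o_P(1)$, and the regenerative CLT for $H_t$ (with asymptotic variance $\sigma_j^{2}/E|I_1^{j}|$, which is independent of the reference state $j$) delivers the mixture-Gaussian limit exactly as in Theorem~\ref{T2}(a); the $\sum_{j}\delta_{U}(\{j\})$ representation arises by conditioning on the value of $X_0$ drawn from $\pi$ and recording which renewal cycle structure is used.

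For part (b), under $E_\pi c(\cdot)=0$ the functional $H_t/\sqrt{t}$ is CLT-convergent without drift correction and $G_t$ is no longer bounded. Laplace-type asymptotics should yield
\begin{align*}
\log|G_t| = \max_{0\le s\le t} H_s + O_P(1),
\end{align*}
so that $\log|F_t|/\sqrt{t} = -H_t/\sqrt{t} + (\max_{0\le s\le t} H_s)/\sqrt{t} + o_P(1)$. Applying Donsker's invariance principle to the renewal cycle sums gives the joint convergence $(H_t/\sqrt{t},\,(\max_{0\le s\le t}H_s)/\sqrt{t}) \stackrel{d}{\to} (\sigma B_1,\sigma\max_{0\le u\le 1}B_u)$, and the reflection principle $\max_u B_u - B_1 \stackrel{d}{=} |B_1|$ (the classical Erd\H{o}s--Kac identity \cite{ErdosKac1946} invoked in the discussion after Theorem~\ref{T2}) produces the $\sigma|N|$ limit, written in mixture form over the initial state.

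The main obstacle is establishing the Laplace estimate $\log|G_t| = \max_s H_s + O_P(1)$ in part (b), because $d(\cdot)$ may change sign and a priori cancellations could make $|G_t|$ much smaller than $e^{\max_s H_s}$. The upper bound is immediate from the triangle inequality applied cycle-by-cycle together with $E\log^{+}|\eta_1|<\infty$, but the matching lower bound requires that infinitely many cycles with $|\eta_k|$ bounded away from zero occur near the cycle realising the current maximum of $H_{\tau_{k-1}^{j}}$. The assumption $d\ne 0$ ensures $P[|\eta_1|>\delta]>0$ for some $\delta>0$, and independence of $(\eta_k)$ from the cycle sums $(\int_{I_k^{j}}c(X_s)\,ds)$ lets one couple the two events; the strengthened second-moment condition on $\log\int b^2 e^{-2\int a}$ in Assumption~\ref{as5}(c) (mirroring the one used in Theorem~\ref{T2}(b)) is precisely what is needed to make this $O_P(1)$ remainder rigorous on the $\sqrt{t}$ scale.
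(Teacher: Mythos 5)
Your strategy is the right one and is essentially the time-reversed form of what the paper intends: the paper omits this proof, stating only that it is a minor modification of the proof of Theorem \ref{T2}, and your decomposition $F_t=e^{-H_t}G_t$ with $H_t=\int_0^t c(X_r)\,dr$ corresponds, after the reversal identity $\widetilde S_{g_t}\stackrel{d}{=}S_{g_t}$ used there, to the paper's extraction of $e^{-\int_0^t a(X_r)dr}$ in part (a) and of $Z^*_{g_t}$ in part (b); your appeal to L\'evy's reflection identity $\max_{u\le 1}B_u-B_1\stackrel{d}{=}|B_1|$ is equivalent to the paper's direct use of Erd\H{o}s--Kac \cite{ErdosKac1946} for $\max_k\sum_{i\le k}\log J^j_i$. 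Part (a) is essentially complete, with one point passed over: you need $P[G_\infty=0]=0$ for $\log|G_t|/\sqrt{t}\to 0$ in probability to hold on all of the probability space; the paper secures the analogous step in Theorem \ref{T2}(a) via absolute continuity of the perpetuity limit (Theorem 1.3 of \cite{alsmeyer2009distributional}), and the same citation is needed here.

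Part (b) contains a genuine gap, which you have correctly located but not closed. Writing $G_t\approx\sum_{k\le g^j_t}e^{S_{k-1}}\eta_k$ with $S_{k-1}=H_{\tau^j_{k-1}}$, the upper bound $\log|G_t|\le\max_k S_{k-1}+o_P(\sqrt{t})$ is straightforward, but the lower bound argument does not work as described. First, $(\eta_k)$ is not independent of the cycle sums $\big(\int_{I^j_k}c(X_s)\,ds\big)$: both are functionals of $X$ on the same cycle, so only independence across distinct cycles is available, and your proposed coupling must be restated in those terms. More seriously, exhibiting cycles with $|\eta_k|$ bounded away from zero near the argmax does not prevent cancellation: for a mean-zero walk there are typically several indices $l$ with $S_{l-1}$ within $O(1)$ of the running maximum, their weights $e^{S_{l-1}-\max S}$ are all of order one, and with signed $d$ the finite sum $\sum_l e^{S_{l-1}-\max S}\eta_l$ could a priori be super-exponentially small. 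What is required is an anti-concentration lower bound for this random-length signed sum --- precisely the content of Theorems 2(a) and 3(a) of \cite{hitczenko2011renorming}, which the paper points to for this reason (note that in Theorem \ref{T2}(b) itself the issue does not arise because $K^j_k\ge 0$ there, so $Z^*_n\le S_n\le nZ^*_n$). Also, the squared $\log^{+}$ moment condition enters the upper bound (controlling $\max_{k\le n}\log^{+}|\eta_k|=o(\sqrt{n})$ a.s.), not the lower bound, contrary to your closing sentence. Either invoke \cite{hitczenko2011renorming} explicitly for the signed case or supply the anti-concentration argument; as written the lower bound is asserted rather than proved.
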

Corollary \ref{Cor3} can be proved by a minor modification of the proof of Theorem \ref{T2}. 
The proof of Corollary \ref{Cor3} is therefore omitted.

\section{Applications}\label{sec_applications} 

The results presented in Sections \ref{sec_stable} and \ref{sec_unstable} have applications in various contexts involving stochastic processes under hidden Markovian environments. We consider two specific applications: the Cox-Ingersoll-Ross process originally introduced for modeling interest rates, and a classical SIS model considered in epidemiology.

\subsection{The Cox-Ingersoll-Ross process}

In \cite{hou2017heavy} authors considered general Cox-Ingersoll-Ross (CIR) model and explored the criteria for different tail properties of the stationary distribution in terms of the hidden Markovian switching  dynamics. Following we consider a specific parametrization of CIR process and express the explicit stationary distribution that one observes under hidden Markovian contexts for drift and diffusion coefficients. 

Let $a,b:S\to\R$ with $a\neq 0$ be arbitrary functions, let $n\in\mathbb{N}$ such that $n\ge 2$ and define
$$
\kappa,\theta,\xi:S\to\R, \quad
\kappa:=2a,\quad \theta:= \frac{nb^{2}}{2a}, \quad \xi:=2b. 
$$
Consider the CIR process $(R_t)_{t\ge 0}$ defined as the solution to the stochastic differential equation
\beqn
dR_{t}= \kappa(X_{t})(\theta(X_{t}) - R_{t})dt+\xi(X_{t}) \sqrt{R_{t}} dW_{t}, \s R_{0}=r_0>0.\label{cir}
\eeqn
In \cite{CIR1985} the CIR model was introduced as a interest rate model where $\kappa,\theta,\xi$ are positive constants 
and Feller proved that $2\kappa\theta\ge \xi^{2}$ ensures that the CIR process is non-negative with probability $1$ (which holds automatically for the above parametrization, regardless of the value of $X_{t}=x\in S$ since $n\ge 2$). We present long time results for the CIR process defined above. 

\begin{Proposition}\label{CIRapp}
Consider the CIR model \eqref{cir} with above parametrizations and $X$ as defined in \eqref{model2}. 
Let $U\sim\pi$ and $N_1,\dots,N_n\sim \normal(0,1)$ be independent.
\begin{enumerate}[(a)]
\item Suppose Assumptions \ref{as1} and \ref{as2} ($E_{\pi}\kappa(\cdot)>0$ and integrability of $\xi(\cdot)$) hold. Then  
\beqn
R_{t} \stackrel{d}{\to} \sum_{j\in S}\delta_{U}(\{j\})V_{j}\sum_{i=1}^n N_i^2
\quad\text{as }t\to\infty,
\label{cir1}
\eeqn
where $V_{j}$ is independent of $U,N_1,\dots,N_n$ and $\mathcal{L}(V_j)$ is given in Theorem \ref{P1}. 
\item Suppose Assumptions \ref{as1} and \ref{as3} hold. If $E_{\pi}\kappa(\cdot)<0$, then
$$
\frac{\log R_{t}}{\sqrt{t}}+2\sqrt{t}E_{\pi}\kappa(\cdot) \stackrel{d}{\to} \sum_{j\in S}\delta_{U}(\{j\})\frac{2\sigma_{j}}{\sqrt{E|I^{j}_1|}}\max_{1\le i\le n}N_{i}
\quad\text{as }t\to\infty.
$$
\item Suppose Assumptions \ref{as1} and \ref{as3} hold. If $E_{\pi}\kappa(\cdot)=0$, then
$$
\frac{\log R_{t}}{\sqrt{t}} \stackrel{d}{\to} \sum_{j\in S}\delta_{U}\big(\{j\}\big)\frac{2\sigma_{j}}{\sqrt{E|I^{j}_1|}}\max_{1\le i\le n}|N_{i}| 
\quad\text{as }t\to\infty.
$$
\end{enumerate}
\end{Proposition}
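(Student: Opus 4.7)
The plan is to exploit the classical squared-Ornstein-Uhlenbeck representation of a CIR process and to lift the results of Theorems \ref{P1} and \ref{T2} coordinate-wise to this representation. Since $n\ge 2$ is an integer, I would introduce $n$ independent standard Brownian motions $W^{(1)},\dots,W^{(n)}$, defined jointly with $X$ and independent of $X$, and let $Y^{(i)}$ solve the regime switching OU SDE \eqref{model1} driven by $W^{(i)}$ with deterministic initial values $y^{(i)}_0$ satisfying $\sum_{i=1}^n (y^{(i)}_0)^2 = r_0$. An It\^o computation applied to $\widetilde R_t := \sum_{i=1}^n (Y^{(i)}_t)^2$, together with L\'evy's characterization applied to the martingale $\int_0^{\cdot} \widetilde R_s^{-1/2}\sum_{i=1}^n Y^{(i)}_s\,dW^{(i)}_s$, shows
\[
d\widetilde R_t = \kappa(X_t)\bigl(\theta(X_t)-\widetilde R_t\bigr)\,dt + \xi(X_t)\sqrt{\widetilde R_t}\,dB_t
\]
for a standard Brownian motion $B$. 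Weak uniqueness for \eqref{cir} then gives $R_t\stackrel{d}{=}\widetilde R_t$ for every $t$, so the long time analysis can be carried out directly on $\widetilde R_t$.

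For part (a), I would apply Theorem \ref{P1} coordinate-wise. Conditionally on the trajectory of $X$, each $Y^{(i)}_t$ is Gaussian with a variance that is a deterministic functional of $X$, and across $i$ these conditional Gaussians are independent because the driving Brownian motions $W^{(i)}$ are. Passing to the stationary limit yields the joint convergence
\[
(Y^{(1)}_t,\dots,Y^{(n)}_t,X_t)\stackrel{d}{\to}(\sqrt{V_U}\,N_1,\dots,\sqrt{V_U}\,N_n,U)
\]
with $V_U$ as in Theorem \ref{P1}, shared across $i$ because it is $X$-measurable in the limit, and with $N_1,\dots,N_n$ i.i.d.\ standard normal, independent of $(U,V_U)$. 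Squaring and summing produces the stated mixture $V_U\sum_{i=1}^n N_i^2$.

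For parts (b) and (c), I would use the bracketing
\[
\max_{1\le i\le n}(Y^{(i)}_t)^2 \le \widetilde R_t \le n\max_{1\le i\le n}(Y^{(i)}_t)^2,
\]
which gives $\log\widetilde R_t = 2\max_{1\le i\le n}\log|Y^{(i)}_t| + O(1)$, the $O(1)$ correction becoming negligible after division by $\sqrt{t}$. Theorem \ref{T2}(a) (respectively \ref{T2}(b)) would then be applied jointly to the $n$ processes $Y^{(i)}$, and the continuous mapping theorem passed through the $\max$-functional to obtain $\max_{1\le i\le n}N_i$ (respectively $\max_{1\le i\le n}|N_i|$). The bookkeeping is completed by noting that $E_\pi\kappa=2E_\pi a$ and that the factor $2$ in front of $\max\log|Y^{(i)}_t|$ produces the coefficient $2\sigma_U/\sqrt{E|I^U_1|}$ in the limit.

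The main obstacle is to justify that the limits $N_i$ appearing in the joint version of Theorem \ref{T2} are independent across the copies $i$. All $n$ OU processes share the common regime process $X$, and a careless application would produce a single shared Gaussian fluctuation rather than $n$ independent ones. Resolving this requires disentangling, at scale $\sqrt{t}$, the contribution coming from $X$ through $\int_0^t a(X_r)\,dr$ (which is common to all indices) and the contribution coming from the individual Brownian drivers $W^{(i)}$ through the martingale part of \eqref{ito2} (which is independent across $i$), and identifying which of the two governs the fluctuations in the transient and null recurrent regimes. This joint CLT step is the delicate part and is exactly where the $\max$-structure in parts (b) and (c) is produced.
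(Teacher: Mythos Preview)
Your approach is exactly the one the paper takes: represent $R_t$ as a sum of $n$ squared OU processes sharing the regime process $X$ but driven by independent Brownian motions, use It\^o's formula plus L\'evy's characterization to identify this sum as a weak solution of \eqref{cir}, apply Theorem \ref{P1} jointly for part (a), and for parts (b)--(c) use the sandwich $\max_i(Y^{(i)}_t)^2\le \widetilde R_t\le n\max_i(Y^{(i)}_t)^2$ together with a joint version of Theorem \ref{T2} and the continuous mapping theorem through $\max$. For part (a) your argument that the $N_i$ are independent is correct and is precisely the paper's: conditionally on $X$ the $Y^{(i)}_t$ are independent Gaussians with a common $X$-measurable variance, so the limit is $\sqrt{V_U}(N_1,\dots,N_n)$ with i.i.d.\ $N_i$.

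The concern you raise for (b)--(c) is well founded, and the paper does not resolve it either: it simply asserts the joint convergence \eqref{CIR3} ``similar to \eqref{tre14}'' and then pushes $\max$ through. If you actually carry out the disentangling you describe, you will find the opposite of what the $\max$-statement suggests. In the proof of Theorem \ref{T2}(a) the Gaussian $N$ comes from the renewal CLT \eqref{tre11} applied to $\sum_{l=1}^{g_t^j}\log J_l^j$, a functional of $X$ alone; the Brownian contribution sits in $\widetilde Y_t$, which is $O_P(1)$ and disappears after the $1/\sqrt t$ scaling. Likewise in Theorem \ref{T2}(b) the limit $|N|$ comes from $M^{\otimes j}_{g_t^j}$, again purely a functional of $X$. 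Hence for $n$ OU processes sharing $X$, the limiting Gaussians across $i$ are \emph{identical}, not independent, and $\max_{1\le i\le n}N_i$ collapses to a single $N$. Your ``delicate part'' therefore does not produce a nontrivial $\max$-structure; rather it reveals that the $\max$ over $i$ is redundant and that the proposition's formulation with independent $N_i$ is misleading at this point, even though your overall strategy matches the paper's.
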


\begin{remark}\label{R1CIR}
Proposition \ref{CIRapp} cannot be easily extended to noninteger values $n$. However for noninteger $n$ the solution to the CIR process can be written as a sum of squared $OU$ processes and a squared Bessel process (Chapter 6 of \cite{jeanblanc2009mathematical}). Therefore we would need to investigate long time behavior of Bessel processes under Markovian regime switching in detail in order to generalize Proposition \ref{CIRapp}.
\end{remark}

\subsection{The SIS model in epidemiology}

We consider deterministic SIS epidemic models under Markov modulated environments similar to the one considered in \cite{gray2012sis} but with number of regimes $|S|>2$ instead of just two states. Let $\alpha,\beta:S\to\R$ be functions denoting rate of infection and recovery, respectively. Consider a fixed population size $n$ and subpopulation sizes $I_{t}$ and $S_{t}$ at time $t\ge 0$, satisfying $I_{t}+S_{t}=n$, of infectious and susceptible individuals, respectively. 
The model is determined by the system of equations
\begin{align*}
\frac{dS_t}{dt}=-\beta(X_t)S_tI_t+\alpha(X_t)I_t, \quad
\frac{dI_t}{dt}=\beta(X_t)S_tI_t - \alpha(X_t)I_t, \quad I_t+S_t=n,
\end{align*}
where $I_t,S_t$ may take arbitrary real values in $[0,n]$. 
If $\alpha,\beta$ were constants instead of functions then one would have, with $\gamma:=\beta n-\alpha$,  
$$
I_t=\Big[e^{-\gamma t}\Big(\frac{1}{I_0}-\frac{\beta}{\gamma}\Big)+\frac{\beta}{\gamma}\Big]1_{\{\gamma\neq 0\}}
+\Big[\frac{1}{I_0}+\beta t\Big]^{-1}1_{\{\gamma=0\}}
$$
which would lead to 
$$
\lim_{t\to\infty}I_t=\frac{\gamma}{\beta} \quad\text{if } \gamma>0, \quad
\lim_{t\to\infty}I_t=0 \quad\text{if } \gamma\le 0.
$$
Notice that $\gamma\le 0$ is equivalent to $R_{0}\le 1$, where $R_{0}$ is the reproduction number. For $|S|=2$, results from \cite{gray2012sis} suggest 
\begin{enumerate}[(a)]
\item If $E_{\pi}\gamma(\cdot)>0,$ there is a persistence for the infected population size, i.e., in probability, 
$$
\liminf_{t\to\infty}\log I_t\le \frac{E_{\pi}\gamma(\cdot)}{E_{\pi}\beta(\cdot)},\quad 
\limsup_{t\to\infty}\log I_t\ge \frac{E_{\pi}\gamma(\cdot)}{E_{\pi}\beta(\cdot)}.
$$
\item If $E_{\pi}\gamma(\cdot)=0,$ simulation suggests that $I_t\to 0$ in probability, but no analytical results were provided. 
\item If $E_{\pi}\gamma(\cdot)<0,$ then $I_t\to 0$ exponentially fast, i.e $\limsup_{t\to\infty}\frac{\log I_t}{t}\le E_{\pi}\gamma(\cdot)$, in probability.
\end{enumerate}

The following proposition provides sharper asymptotic results.

\begin{Proposition}\label{SIS} 
For the model described above and for $a<b$ the following holds
\begin{enumerate}[(a)]
\item If $E_{\pi}\gamma(\cdot)>0$, then $\lim_{t\to\infty}P\Big[\frac{1}{I_{t}}\in (a,b)\Big]=\sum_{j\in S}\pi_{j}P\Big[V_{j}\in (a,b)\Big]$, where $V_{j}$ is given in Corollary \ref{Cor1} when $(c,d)=(\gamma,\beta)$.
\item If $E_{\pi}\gamma(\cdot)=0$, then, for $a\ge 0$, $\lim_{t\to\infty}P\Big[I_{t}\in \big(e^{-b\sqrt{t}},e^{-a\sqrt{t}}\big)\Big]=\sum_{j\in S}\pi_{j} P\Big[|N|\in (a\sigma_{j},b\sigma_{j})\Big]$, where $N\sim \normal(0,1)$ and $\sigma_{j}^2=\Var\big(\int_{\tau^{j}_0}^{\tau^{j}_{1}}\gamma(X_s)ds\big)$.
\item If $E_{\pi}\gamma(\cdot)<0$, then $\lim_{t\to\infty}P\Big[I_{t}\in \big(e^{tE_{\pi}\gamma(\cdot)-b\sqrt{t}},e^{tE_{\pi}\gamma(\cdot)-a\sqrt{t}}\big)\Big]=\sum_{j\in S}\pi_{j} P\big[N\in (a\sigma_{j},b\sigma_{j})\big]$, where $N\sim \normal(0,1)$ and $\sigma_{j}^2=\Var\big(\int_{\tau^{j}_0}^{\tau^{j}_{1}}\gamma(X_s)ds\big)$.
\end{enumerate}
\end{Proposition}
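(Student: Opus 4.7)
The plan is to reduce Proposition~\ref{SIS} to Corollary~\ref{Cor1} and Corollary~\ref{Cor3} via an explicit ODE linearization. The SIS dynamics give
\begin{align*}
\frac{dI_t}{dt}=\beta(X_t)(n-I_t)I_t-\alpha(X_t)I_t=\gamma(X_t)I_t-\beta(X_t)I_t^2.
\end{align*}
This is a Bernoulli ODE, so I would introduce $Z_t:=1/I_t$, which solves the linear nonautonomous ODE
\begin{align*}
\frac{dZ_t}{dt}=-\gamma(X_t)Z_t+\beta(X_t),
\end{align*}
with closed-form solution
\begin{align*}
Z_t=Z_0\,e^{-\int_0^t\gamma(X_r)dr}+\int_0^t\beta(X_s)\,e^{-\int_s^t\gamma(X_r)dr}ds.
\end{align*}
The second term is exactly an exponential functional of the form \eqref{It_exp_func} with $(c,d)=(\gamma,\beta)$, so the whole analysis is now about $Z_t$, and the events in the proposition can be rewritten as events on $Z_t$ (since $I_t\in(e^{-b\sqrt t},e^{-a\sqrt t})$ iff $Z_t\in(e^{a\sqrt t},e^{b\sqrt t})$, and analogously in the other parts).

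For part (a), $E_\pi\gamma(\cdot)>0$ so the first term $Z_0 e^{-\int_0^t\gamma(X_r)dr}$ decays to $0$ almost surely by the ergodic theorem, and Corollary~\ref{Cor1} applied to $F_t=\int_0^t\beta(X_s)e^{-\int_s^t\gamma(X_r)dr}ds$ yields $F_t\stackrel{d}{\to}\sum_j\delta_U(\{j\})V_j$, so $Z_t\stackrel{d}{\to}\sum_j\delta_U(\{j\})V_j$ and the probability statement follows from the mixture form and the continuous mapping theorem (to transfer to $1/I_t$). For part (c), $E_\pi\gamma(\cdot)<0$ makes the first term $Z_0 e^{-\int_0^t\gamma(X_r)dr}$ dominate, growing like $e^{-tE_\pi\gamma(\cdot)}$, while the integral term is of the same exponential order. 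Applying Corollary~\ref{Cor3}(a) to $F_t$ gives $(\log F_t)/\sqrt t+\sqrt t E_\pi\gamma(\cdot)\stackrel{d}{\to}\sum_j\delta_U(\{j\})(\sigma_j/\sqrt{E|I_1^j|})N$, and the same CLT-type limit holds for $\log Z_t$ since the deterministic first term contributes the same centering $-tE_\pi\gamma(\cdot)$ plus a Gaussian fluctuation of order $\sqrt t$ by the CLT for additive functionals of $X$. Translating back via $\log I_t=-\log Z_t$ gives the claimed window.

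For part (b), $E_\pi\gamma(\cdot)=0$, so neither term dominates deterministically and one uses Corollary~\ref{Cor3}(b) applied to the integral term to obtain $(\log F_t)/\sqrt t \stackrel{d}{\to}\sum_j\delta_U(\{j\})(\sigma_j/\sqrt{E|I_1^j|})|N|$; the first term $Z_0 e^{-\int_0^t\gamma(X_r)dr}$ satisfies $(\log\cdot)/\sqrt t\stackrel{d}{\to}\sum_j\delta_U(\{j\})(\sigma_j/\sqrt{E|I_1^j|})N$ by the CLT for $\int_0^t\gamma(X_r)dr$, and since both terms in $Z_t$ are positive one has $\log Z_t\ge \max$ of the two logarithms, up to a $\log 2$ error, giving the $|N|$-type limit as well. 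Again, $I_t=1/Z_t$ converts the statement to the form in the proposition.

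The main technical obstacle is checking that in part (c) the integral term does not dominate the first term: both are exponentially large of order $e^{-tE_\pi\gamma(\cdot)}$, and one must verify that their sum still satisfies the same CLT, either by showing $F_t/(Z_0 e^{-\int_0^t\gamma(X_r)dr})$ is tight (bounded in probability away from $0$ and $\infty$), or equivalently by reading off the joint weak limit from the proof of Corollary~\ref{Cor3}(a) applied to the pair. A secondary, purely bookkeeping, obstacle is matching the normalization $\sigma_j/\sqrt{E|I_1^j|}$ appearing in Corollary~\ref{Cor3} with the scaling $\sigma_j$ used in the statement of Proposition~\ref{SIS}(b)--(c); this is a matter of absorbing the factor $\sqrt{E|I_1^j|}$ into the definition of $\sigma_j$ used in the proposition or checking the conventions consistently.
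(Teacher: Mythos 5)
Your proposal follows essentially the same route as the paper: linearize the Bernoulli ODE via $Z_t=1/I_t$ to obtain $Z_t=Z_0e^{-\int_0^t\gamma(X_r)dr}+\int_0^t\beta(X_s)e^{-\int_s^t\gamma(X_r)dr}ds$ and then invoke Corollaries \ref{Cor1} and \ref{Cor3} in the three regimes. You are in fact somewhat more explicit than the paper about controlling the contribution of the initial-condition term in parts (b) and (c), and about the $\sqrt{E|I^j_1|}$ normalization mismatch between Corollary \ref{Cor3} and the $\sigma_j$ appearing in the proposition, both of which the paper's proof passes over silently.
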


Similar results can be obtained for Markov modulated deterministic SIR models as integrals of type $\int_{0}^{t}e^{-\int_{s}^{t}\gamma(X_{s})}\beta(X_{s})ds$ show up as a consequence of the Markovian environment in the transition rates.

\section{Proofs}\label{sec_proofs}

We use the convention $\sum_{i=j}^k a_i=0$ and $\prod_{i=j}^k a_i=1$ if $j>k$ for any $a_i$.

For functions $c,d:S\to\R$ and $j\in S$, define 
\beqn
G^{c,d}_{j}(x):=\int_{0}^{x}d(j)e^{-c(j)(x-s)}ds=x d(j)1_{\{c(j)=0\}}+ \frac{d(j)}{c(j)}\Big(1-e^{-xc(j)}\Big)1_{\{c(j)\neq 0\}}.
\label{Gfun}
\eeqn

\subsection{Proof of Theorem \ref{P1}}

\begin{proof}
We prove the statement in a number of steps. 
The marginal distribution of the regime process $X$ in for $S$ with initial distribution $\delta_{i}$, $i\in S$, is denoted 
$$
P_{ij}(0,t):=P\big[X_{t}=j\mid X_{0}=i\big]=\delta_{i}P_{t}(i,\{j\}).
$$
Let the transition kernels of the time-homogeneous Markov process $(X_{t},Y_{t})_{t\ge 0}$ be the maps $\mb{P^{t}}:\big(S\times\R\big) \times \mathcal{B}\big(S\times\R\big)\to [0,1]$. Then, for any 
$(i,y_{0})\in (S\times \R)$ and $(I,A)\in \mathcal{B}(S\times \R)$, 
\begin{align}
\delta_{(i,y_0)}\mb{P^{t}}\big((i,y_{0}),(I,A)\big)
&=\sum_{j\in I}P\big[(X_{t},Y_{t})\in (\{j\},A)\mid (X_0,Y_0)=(i,y_{0})\big]\non\\
&=\sum_{j\in I}P_{ij}\big(0,t\big)P\big[Y_t\in A\mid Y_0=y_0,X_{0}=i,X_{t}=j \big], \label{t1e1}
\end{align}
since, by \eqref{causal}, 
$$
P\big[X_{t}=j\mid (X_{0},Y_{0})=(i,y_0)\big]=P\big[X_{t}=j\mid X_{0}=i\big]=P_{ij}\big(0,t\big).
$$
If we can show that there exists a $\mu_{\infty}\in\mathcal{P}(S\times R)$ such that 
\beqn
 \delta_{(i,y_{0})}\mb{P^{t}}\mathop{\to}^{w} \mu_{\infty}\quad\text{as } t\to\infty\quad \text{for all } (i,y_{0})\in (S\times \R),\label{toprove}
\eeqn
then as a consequence of the strong Feller property, satisfied trivially by $(X,Y)$, one can deduce that $\mu_{\infty}$ is the unique invariant measure. The strong Feller property ensures that one may interchange the order of limit and expectation in 
$$
\mu_{\infty}\mb{P^{s}}f=\lim_{t\to\infty}\delta_{(i,y_{0})}\mb{P^{t}}\mb{P^{s}}f=\lim_{t\to\infty}\delta_{(i,y_{0})}\mb{P^{t+s}}f=\mu_{\infty}f \quad\text{for all } s>0. 
$$
We find $\mu_{\infty}$ using \eqref{toprove}.

Lemma \ref{lem1} below provides a representation of the second factor in the product in \eqref{t1e1}. The representation is expressed in terms of an $(\mathcal{F}_{t}^X)_{t\geq 0}$-adapted stochastic process $(Q^{(1)}_{t},Q^{(2)}_{t})_{t\geq 0}$. 
Fix two arbitrary states $i,j \in S$ and suppose that $X_{0}=i.$ Recall that $\tau_{0}^{j}$ is the first time $X$ hits state $j$ and define $\tau_{k}^{j}$ and $T_{k}^{j}$ for $k\ge 0$ recursively as in \eqref{tau}. For $k\ge 1$ define
\begin{align}
(J^{j}_{k},K^{j}_{k})&\stackrel{}{:=}\Big(e^{-\int_{\tau_{k-1}^{j}}^{\tau_{k}^{j}}a(X_{s})ds},\int_{\tau_{k-1}^{j}}^{\tau_{k}^{j}}b^{2}(X_{s})e^{-2\int_{s}^{\tau_{k}^{j}}a(X_{r})dr}ds\Big), \label{JK}\\ 
(J^{(i,j)}_{0},K^{(i,j)}_{0})&\stackrel{}{:=}\Big(e^{-\int_{0}^{\tau_{0}^{j}}a(X_{s})ds},\int_{0}^{\tau_{0}^{j}}b^{2}(X_{s})e^{-2\int_{s}^{\tau_{0}^{j}}a(X_{r})dr}ds\Big).\non
\end{align}
Note that $((J^{j}_{k},K^{j}_{k}))_{k=1}^{\infty}$ is an i.i.d sequence by virtue of a renewal argument for the regenerating process $X$: the renewal cycles $(I^{j}_{k})_{k\ge 1}$ are i.i.d. $X$ generates identical pairs $(J_{k}^{j},K_{k}^{j})$ of functionals of $X$ which are independent for $k\ge 1$. However, for a fixed $t>0,$ $((J^{j}_{k},K^{j}_{k}))_{k=1}^{g^{j}_{t}}$ are not i.i.d. since $g^{j}_{t}$, defined in \eqref{g_{t}}, is a renewal time which is dependent on sum of all renewal cycle lengths before time $t$.

\begin{lemma}\label{lem1} 
Suppose Assumptions \ref{as1} and \ref{as2} hold. Then for any $t>0,$ 
\begin{align*}
P\big[Y_{t} \in A \mid Y_{0}=y_{0},X_{0}=i,X_{t}=j\big]=P\big[y_{0}Q^{(1)}_{t}+\sqrt{Q^{(2)}_{t}}N\in A \mid X_0=i,X_t=j\big],
\end{align*}
where $N\sim \normal(0,1)$, $N\indep (\mathcal{F}_{t}^{X})_{t\ge 0}$ and $(Q^{(1)}_t,Q^{(2)}_t)_{t\ge 0}$ is an $(\mathcal{F}_{t}^{X})_{t\ge 0}$-adapted process given by 
\begin{align}
 Q^{(1)}_t&=e^{ -a(j)(t-\tau_{g_{t}^{j}}^{j})}\bigg(\prod_{k=1}^{g^{j}_{t}}J^{j}_{k}\bigg)J^{(i,j)}_{0}\,,\non\\
 Q^{(2)}_t&=G_{j}\big(t-\tau_{g_{t}^{j}}^{j}\big)+e^{-2a(j)(t-\tau_{g_{t}^{j}}^{j})}\bigg[\bigg(\prod_{k=1}^{g_{t}^{j}-1}J^{j}_{k}\bigg)^{2} K^{(i,j)}_{0}+\sum_{k=1}^{g_{t}^{j}-1} \bigg( \prod_{l=k+1}^{g_{t}^{j}-1}J^{j}_{l}\bigg)^{2} K^{j}_{k}\bigg]. \label{joint}
\end{align}
\end{lemma}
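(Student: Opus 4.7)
The plan is to start from the It\^o variation-of-constants formula applied to \eqref{model1}: applying It\^o's formula to $e^{\int_0^t a(X_r)\,dr} Y_t$ gives
\begin{equation*}
Y_t \;=\; y_0\, e^{-\int_0^t a(X_r)\,dr} \;+\; \int_0^t b(X_s)\, e^{-\int_s^t a(X_r)\,dr}\,dW_s,
\end{equation*}
a representation already displayed in \eqref{ito2}. Since conditioning on $(X_0,X_t)$ only partly determines the $X$-path, the natural route is to first condition on the full $\sigma$-algebra $\mathcal{F}_t^X$ and then marginalise down to $\sigma(X_0,X_t)$.

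The key probabilistic step is a conditional-Gaussian argument. Because $W\indep X$, the conditional law of $W$ given $\mathcal{F}_t^X$ is still Brownian, while the integrand $b(X_s)\,e^{-\int_s^t a(X_r)\,dr}$ is $\mathcal{F}_t^X$-measurable and hence deterministic under this conditioning. The stochastic integral therefore reduces to a Wiener integral of a deterministic function and is centred Gaussian with variance $\int_0^t b^2(X_s)\,e^{-2\int_s^t a(X_r)\,dr}\,ds$ by the It\^o isometry. Consequently $Y_t \stackrel{d}{=} y_0\,\widetilde Q^{(1)}_t + \sqrt{\widetilde Q^{(2)}_t}\, N$ with $N\sim \normal(0,1)$ independent of $\mathcal{F}_t^X$, where $\widetilde Q^{(1)}_t := e^{-\int_0^t a(X_r)\,dr}$ and $\widetilde Q^{(2)}_t := \int_0^t b^2(X_s)\,e^{-2\int_s^t a(X_r)\,dr}\,ds$. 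I expect this to be the only genuinely delicate step, because the integrand is not adapted to the Brownian filtration augmented by $\mathcal{F}_s^X$: it depends on $X_r$ for $r\in(s,t]$. The independence $W\indep X$ is exactly what lets one condition pathwise on $(X_r)_{r\leq t}$ and still apply the Gaussian conclusion via regular conditional distributions.

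The remaining work is algebraic bookkeeping along renewal cycles. I will split $[0,t]$ into $[0,\tau_0^j]$, the cycles $(\tau_{k-1}^j,\tau_k^j]$ for $k=1,\ldots,g_t^j$, and the residual $(\tau_{g_t^j}^j,t]$, and use that on $\{X_t=j\}$ the chain must sit in state $j$ throughout the last subinterval, giving $\int_{\tau_{g_t^j}^j}^t a(X_r)\,dr = a(j)(t-\tau_{g_t^j}^j)$. Multiplicativity of the exponential then collapses $\widetilde Q^{(1)}_t$ into the terminal factor $e^{-a(j)(t-\tau_{g_t^j}^j)}$ times the product $J_0^{(i,j)}\prod_{k=1}^{g_t^j} J_k^j$ in the notation of \eqref{JK}. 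For the variance, the integral over each cycle $(\tau_{k-1}^j,\tau_k^j]$ equals $K_k^j$ multiplied by $e^{-2\int_{\tau_k^j}^t a(X_r)\,dr}$, which by the same cycle decomposition factors into a product of squared $J_l^j$'s times $e^{-2a(j)(t-\tau_{g_t^j}^j)}$; the residual integral contributes $G_j(t-\tau_{g_t^j}^j)$ via \eqref{Gfun} with $c=2a,\,d=b^2$. Summing these contributions recovers the stated form of $Q^{(2)}_t$, and marginalising $\mathcal{F}_t^X$ down to $\sigma(X_0,X_t)$ completes the proof.
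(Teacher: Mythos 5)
Your proposal is correct and follows essentially the same route as the paper: the variation-of-constants representation $Y_t=\mb{\Phi(0,t)}y_0+\int_0^t b(X_s)\mb{\Phi(s,t)}\,dW_s$, the conditional-Gaussian/It\^o-isometry step exploiting $W\indep X$, and the renewal-cycle decomposition of $[0,t]$ with the residual interval handled via $X_s=j$ on $(\tau^{j}_{g^j_t},t]$. Your explicit justification of why the stochastic integral is conditionally Gaussian given $\mathcal{F}^X_t$ (the integrand not being adapted to the Brownian filtration) is in fact slightly more careful than the paper's one-line appeal to the It\^o isometry, but the substance is the same.
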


Now we prove the main result by finding $\mu_{\infty}$ by computing the limit as $t\to \infty$ for a term in the sum \eqref{t1e1}. Note that $P_{ij}(0,t)\to \pi_{j}$ as $t\to\infty$ by the ergodicity Assumption \ref{as1}(b). 
Using Lemma \ref{lem1} it remains to show that
\beqn
P\Big[y_{0}Q^{(1)}_{t}+\sqrt{Q^{(2)}_{t}}N\in A \mid X_{0}=i,X_{t}=j\Big]\to P\Big[\sqrt{V_j}N\in A\Big]\s
\quad (A,j)\in \big(\mathcal{B}(\R),S\big). \label{toprove2}
\eeqn
Let us summarize the steps. \eqref{toprove2} follows if we show 
\beqn
\mathcal{L}\big(\big(Q^{(1)}_{t},Q^{(2)}_{t}\big)\mid X_{0}=i,X_{t}=j\big)\stackrel{w}{\to}\delta_{0}\otimes \mathcal{L}\big(V_{j}\big)
\quad\text{as } t\to\infty.
\label{toprove4}
\eeqn 
The result above is observed by taking the limit as $t\to \infty$ of the expression on the right hand side of \eqref{joint}. 
Set
\beqn
R_t:=\big(R_{t}^{(1)}, R_{t}^{(2)}\big):=\bigg(
\bigg(\prod_{k=1}^{g_{t}^{j}}J_{k}^{j}\bigg)J^{(i,j)}_{0},
\bigg(\prod_{k=1}^{g_{t}^{j}}J_{k}^{j}\bigg)^{2}K^{(i,j)}_{0}+\sum_{k=1}^{g_{t}^{j}}\bigg(\prod_{l=k+1}^{g_{t}^{j}}J^{j}_{l}\bigg)^{2}K^{j}_{k}\label{condR_t}
\bigg)
\eeqn
and notice that, on $\{X_{0}=i,X_{t}=j\}$,  
\beqn
\big(Q^{(1)}_{t},Q^{(2)}_{t}\big)
=\bigg(e^{-a(j)\big(t-\tau^{j}_{g^{j}_{t}}\big)}R^{(1)}_{t},G^{2a,b^2}_{j}(t-\tau_{g_{t}^{j}}^{j})+e^{-2a(j)(t-\tau_{g_{t}^{j}}^{j})}R^{(2)}_{t}\bigg).\label{z_to_r}
\eeqn
Next we determine the weak limit, as $t\to\infty$, of  
$
\mathcal{L}\big(\big((t-\tau^{j}_{g_{t}^{j}}),R_{t}\big)\mid X_{t}=j\big)
$
and in particular show that $(t-\tau^{j}_{g_{t}^{j}})$ and $R_t$ are asymptotically independent, given $X_{t}=j$.
Take $x\in (0,t)$ and set $B_{t,x}:=\{X\text{ makes no jump in } (t-x,t]\}$. Then, 
$$
\{X_{t}=j, t-\tau^{j}_{g_{t}^{j}}>x,R_{t}\in A\}=\{B_{t,x}, X_{t-x}=j, R_{t-x}\in A\}, \quad A:=(A_{1},A_{2}),
$$ 
since $\tau^{j}_{g_{t}^{j}}<t-x$ implies $\tau^{j}_{g_{t-x}^{j}}=\tau^{j}_{g_{t}^{j}}$ which further implies $R_{t}=R_{t-x}.$ 
Using the above equality, 
\begin{align*}
P\big[t-\tau^{j}_{g_{t}^{j}} > x, R_{t}\in A\mid X_{t}=j\big]
&= P\big[B_{t,x}, X_{t-x}=j, R_{t-x}\in A\mid X_{t}=j\big] \\
&= P\big[B_{t,x}\mid X_{t-x}=j, R_{t-x} \in A\big]P\big[R_{t-x}\in A\mid X_{t-x}=j\big] \frac{P[X_{t-x}=j]}{P[X_{t}=j]}
\end{align*}
Since $R_{t-x}$ is $\mathcal{F}^X_{t-x}$-measurable, the Markov property of $X$ implies 
$$
P\big[B_{t,x}\mid X_{t-x}=j, R_{t-x} \in A\big]=P\big[B_{t,x}\mid X_{t-x}=j\big]=e^{\lambda_{jj}x}.
$$ 
Moreover, 
$$
\lim_{t\to\infty}\frac{P[X_{t-x}=j]}{P[X_{t}=j]}=1.
$$
We will show that 
\beqn
\lim_{t\to\infty}P\big[R_{t-x}\in A\mid X_{t-x}=j\big]=P\big[X^{*}_{j}\in A_{2}\big]1_{\{0\in A_{1}\}},
\quad A:=(A_{1},A_{2}), \label{condition1}
\eeqn
for any $A:=(A_{1},A_{2})$ such that $P[(0,X^{*}_j)\in \partial A]=0$ with $\partial A$ denoting the boundary of $A$, 
where $X^{*}_{j}$ satisfies 
\begin{align}
 \sum_{k=1}^{g_{t}^{j}} \bigg( \prod_{l=k+1}^{g_{t}^{j}}J^{j}_{l}\bigg)^{2}K^{j}_{k}  \stackrel{d}{\to} X^{*}_{j}
 \quad \text{as } t\to\infty,  
 \quad 
 X^{*}_{j}\stackrel{d}{=}(J_{1}^{j})^{2}X^{*}_{j}+K_{1}^{j},
 \quad 
 X^{*}_{j} \indep \big(J_{1}^{j},K_{1}^{j}\big).\label{toprove3Xj}
\end{align}
This would imply that $(t-\tau^{j}_{g_{t}^{j}})$ and $R_{t}$, given $\{X_{t}=j\}$,  are asymptotically independently, and the weak limit of $(t-\tau^{j}_{g_{t}^{j}})$ is $\exponential(-\lambda_{jj})$.  
We will use the following lemma:

\begin{lemma}\label{lem2} Under Assumptions \ref{as1} and \ref{as2}, 
$R_t$ in \eqref{condR_t} satisfies
\beqn
R_{t}\stackrel{d}{\to}\big(0,X^{*}_{j}\big)\quad\text{as } t\to\infty,\label{lem2eeq1}
\eeqn
where $X^{*}_{j}\stackrel{d}{=}\big(J_{1}^{j}\big)^{2}X^{*}_{j}+K_{1}^{j}$ with $X^{*}_{j} \indep\big(J_{1}^{j},K_{1}^{j}\big).$ \end{lemma}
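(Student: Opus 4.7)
The plan is to analyze the two coordinates of $R_t$ separately and then combine them.

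For the first coordinate $R_t^{(1)}$, I would apply the strong law of large numbers to the i.i.d.\ sequence $(\log J_k^j)_{k\ge 1}$. Since $\log J_k^j=-\int_{I_k^j}a(X_s)\,ds$, the renewal-reward identity yields $E\log J_1^j=-E|I_1^j|\,E_\pi a(\cdot)<0$ under Assumption \ref{as2}(a), so $\sum_{k=1}^n\log J_k^j\to-\infty$ almost surely. Combined with $g_t^j\to\infty$ a.s.\ (positive recurrence of $X$) and the almost sure finiteness of $J_0^{(i,j)}$, this gives $R_t^{(1)}\stackrel{\text{a.s.}}{\to}0$.

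For the second coordinate, the leading summand $\bigl(\prod_{k=1}^{g_t^j}J_k^j\bigr)^2 K_0^{(i,j)}$ also tends to $0$ a.s.\ by the same argument. What remains is
$$
S_n \;:=\; \sum_{k=1}^n \Big(\prod_{l=k+1}^n A_l\Big)B_k, \qquad A_l:=(J_l^j)^2,\quad B_k:=K_k^j,
$$
evaluated at the random index $n=g_t^j$. The pairs $(A_k,B_k)_{k\ge 1}$ are i.i.d., and under Assumption \ref{as2} one has $E\log A_1=2E\log J_1^j<0$ and $E\log^+|B_1|<\infty$. Hence Theorem 2.1.3 of \cite{buraczewski2016stochastic} applies to the stochastic recurrence equation $Z_n=A_n Z_{n-1}+B_n$, of which $S_n$ is the solution started from $Z_0=0$: there exists a unique stationary solution $X_j^*\stackrel{d}{=}A_1X_j^*+B_1$, $X_j^*\indep(A_1,B_1)$, and $S_n\stackrel{d}{\to}X_j^*$. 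This is transparent via the time-reversal identity in law $S_n\stackrel{d}{=}\sum_{k=1}^n\bigl(\prod_{l=1}^{k-1}A_l\bigr)B_k$, the right-hand side of which even converges almost surely.

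The main technical step, which I expect to be the principal obstacle, is promoting $S_n\stackrel{d}{\to}X_j^*$ to the random-time version $S_{g_t^j}\stackrel{d}{\to}X_j^*$. This is delicate because $g_t^j$ is coupled to the pairs $(A_k,B_k)$ through the shared dependence of $(J_k^j,K_k^j,|I_k^j|)$ on the $k$-th excursion of $X$. My plan is to exploit the decomposition
$$
S_{g_t^j}=\sum_{k=g_t^j-m+1}^{g_t^j}\Big(\prod_{l=k+1}^{g_t^j}A_l\Big)B_k+\Big(\prod_{l=g_t^j-m+1}^{g_t^j}A_l\Big)S_{g_t^j-m}
$$
for a fixed integer $m$. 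Re-indexing the last-$m$-terms block and using the strong Markov property of $X$ at the renewal epochs $\tau_\cdot^j$, the block converges in distribution to $S_m$ as $t\to\infty$ (the residual bias from conditioning on the straddling cycle is asymptotically negligible). The multiplicative residual is the product of $\prod_{l=g_t^j-m+1}^{g_t^j}A_l$, whose limit law is that of $\prod_{l=1}^mA_l$, and the tight family $S_{g_t^j-m}$; since $\prod_{l=1}^m A_l\to 0$ a.s.\ as $m\to\infty$ (because $E\log A_1<0$), the residual vanishes in probability. Sending first $t\to\infty$ and then $m\to\infty$ identifies the distributional limit as $X_j^*$. Combined with $R_t^{(1)}\stackrel{\text{a.s.}}{\to}0$, Slutsky's lemma delivers the joint weak convergence $R_t\stackrel{d}{\to}(0,X_j^*)$.
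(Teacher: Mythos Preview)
Your handling of $R_t^{(1)}$ and of the initial summand $\bigl(\prod_{k=1}^{g_t^j}J_k^j\bigr)^2 K_0^{(i,j)}$ is correct and matches the paper's argument.

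The gap lies in the random-time promotion for $S_{g_t^j}$. Your appeal to the strong Markov property of $X$ at the renewal epochs is not valid here: the index $g_t^j-m$ depends on the full trajectory up to time $\tau_{g_t^j+1}^j$ through the constraint $\tau_{g_t^j}^j\le t<\tau_{g_t^j+1}^j$, so $\tau_{g_t^j-m}^j$ is not a stopping time after which the subsequent $m$ excursions retain their unconditional i.i.d.\ law. The correct tool is \emph{exchangeability}: conditional on $\{g_t^j=n\}$, the triples $(J_k^j,K_k^j,|I_k^j|)_{k=1}^n$ are exchangeable because that event depends on $(|I_k^j|)_{k=1}^n$ only through their sum (together with the independent $|I_{n+1}^j|$ and $\tau_0^j$). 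Moreover, the tightness of $S_{g_t^j-m}$ that you invoke is not justified, and establishing it again requires this exchangeability idea; the joint convergence of the three factors in your decomposition is likewise unaddressed.

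The paper's route is different and cleaner. It uses precisely the exchangeability observation above to obtain, for each fixed $t$, the time-reversal identity
\[
\widetilde S_{g_t^j}\;:=\;\sum_{k=1}^{g_t^j}\Big(\prod_{l=k+1}^{g_t^j}(J_l^j)^2\Big)K_k^j
\;\stackrel{d}{=}\;
S_{g_t^j}\;:=\;\sum_{k=1}^{g_t^j}\Big(\prod_{i=1}^{k-1}(J_i^j)^2\Big)K_k^j,
\]
where $S_n$ is the $n$-th partial sum of the a.s.\ convergent series $S_\infty\stackrel{d}{=}X_j^*$. Since $g_t^j\to\infty$ a.s., the passage $S_{g_t^j}\to S_\infty$ follows directly (the paper spells this out by analyzing $S_\infty-S_{g_t^j}$ term by term). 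This single reversal step replaces your entire last-$m$-block decomposition and avoids the tightness and joint-convergence issues.
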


Exploiting the Markovian structure of $X$ we will show that 
\beqn 
\lim_{t\to\infty}P\big[R_{t-x}\in A\mid X_{t-x}=j\big]=P\big[ (0,X_{j}^{*})\in A\big].\label{condition2}
\eeqn  
We begin with the following lemma for a slightly more general process $R$ in order to fit well also in subsequent proofs.
In Section \ref{verification2.01} we show that the $R$ in the current context defined in \eqref{condR_t} indeed satisfies conditions \eqref{condition2.01} of the following lemma.

\begin{lemma}\label{lem1.5} 
Consider an $(\mathcal{F}^X_{t})_{t\ge 0}$-adapted process $(R_{t})_{t\ge 0}$, and suppose there exist a random variable $R_{\infty}$ and an increasing deterministic function $t\mapsto \varepsilon(t)$ such that 
\beqn
R_{t}\stackrel{d}{\to}R_{\infty},\quad \varepsilon(t)\to\infty, \quad \frac{\varepsilon(t)}{t}\to 0 \quad \text{and}\quad R_{t} - R_{t- \varepsilon(t)} \stackrel{P}{\to} 0 \quad \text{as } t\to\infty.
\label{condition2.01}
\eeqn
Suppose further that $\big(L^{(1)}_{t},L^{(2)}_{t}\big)_{t\ge 0}$ satisfies $\big(L^{(1)}_{t},L^{(2)}_{t}\big)\stackrel{P}{\to} (0,1)$ as $t\to\infty$.
Then, for any $A$ such that $P[R_{\infty}\in \partial A]=0$, 
$$
\lim_{t\to\infty}P\big[L^{(1)}_{t}+L^{(2)}_{t}R_{t}\in A\mid X_{t}=j\big]=P\big[R_{\infty}\in A\big].
$$
\end{lemma}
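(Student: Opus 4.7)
The plan is to first replace the expression $L^{(1)}_t + L^{(2)}_t R_t$ by its \emph{Markov-past} proxy $R_{t-\varepsilon(t)}$ and then decouple this proxy from the conditioning event $\{X_t=j\}$ using the Markov property of $X$. Tightness of $(R_t)$ (a consequence of $R_t \stackrel{d}{\to} R_\infty$) combined with $(L^{(1)}_t,L^{(2)}_t)\stackrel{P}{\to}(0,1)$ and $R_t-R_{t-\varepsilon(t)}\stackrel{P}{\to}0$ yields
\[
L^{(1)}_t + L^{(2)}_t R_t - R_{t-\varepsilon(t)} \stackrel{P}{\to} 0.
\]
Because $P[X_t=j]\to\pi_j>0$ by Assumption \ref{as1}(b), this unconditional convergence in probability survives conditioning on $\{X_t=j\}$: for every $\eta>0$,
\[
P\big[|L^{(1)}_t + L^{(2)}_t R_t - R_{t-\varepsilon(t)}|>\eta\,\big|\,X_t=j\big] \le \frac{P\big[|L^{(1)}_t + L^{(2)}_t R_t - R_{t-\varepsilon(t)}|>\eta\big]}{P[X_t=j]}\to 0.
\]
By Slutsky's theorem applied under the conditional law, it therefore suffices to prove $P[R_{t-\varepsilon(t)}\in A\mid X_t=j]\to P[R_\infty\in A]$ for every $A$ with $P[R_\infty\in\partial A]=0$.

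Writing the conditional probability as a ratio and using that $R_{t-\varepsilon(t)}$ is $\mathcal{F}^X_{t-\varepsilon(t)}$-measurable, the Markov property gives $E[\mathbf{1}_{\{X_t=j\}}\mid\mathcal{F}^X_{t-\varepsilon(t)}]=P_{X_{t-\varepsilon(t)},j}(0,\varepsilon(t))$, hence
\[
P[R_{t-\varepsilon(t)}\in A\mid X_t=j]=\frac{E\big[\mathbf{1}_{\{R_{t-\varepsilon(t)}\in A\}}\,P_{X_{t-\varepsilon(t)},j}(0,\varepsilon(t))\big]}{P[X_t=j]}.
\]
I would then decompose the numerator as
\[
\pi_j\,P[R_{t-\varepsilon(t)}\in A] \;+\; E\big[\mathbf{1}_{\{R_{t-\varepsilon(t)}\in A\}}\big(P_{X_{t-\varepsilon(t)},j}(0,\varepsilon(t))-\pi_j\big)\big].
\]
Since $R_{t-\varepsilon(t)}\stackrel{d}{\to}R_\infty$ (another Slutsky application, using $R_t\stackrel{d}{\to}R_\infty$ and $R_t-R_{t-\varepsilon(t)}\stackrel{P}{\to}0$) and $P[R_\infty\in\partial A]=0$, the first summand tends to $\pi_j P[R_\infty\in A]$, while the denominator tends to $\pi_j$ by ergodicity.

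The remaining, and principal, task is to verify that the error term vanishes. Its absolute value is bounded by $E|P_{X_{t-\varepsilon(t)},j}(0,\varepsilon(t))-\pi_j|$, which I would control by exploiting tightness of the laws of $X_{t-\varepsilon(t)}$ on the countable state space $S$ (available since $t-\varepsilon(t)\to\infty$ forces $X_{t-\varepsilon(t)}\stackrel{d}{\to}\pi$): given $\delta>0$, choose a finite $F\subset S$ with $P[X_{t-\varepsilon(t)}\notin F]<\delta$ for all $t$ large; on $F$ ergodicity gives $\sup_{i\in F}|P_{ij}(0,\varepsilon(t))-\pi_j|\to 0$ as $\varepsilon(t)\to\infty$ (a finite maximum of convergent sequences), while on $F^c$ the integrand is uniformly bounded by $1$. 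Letting first $t\to\infty$ and then $\delta\downarrow 0$ yields the vanishing. The main obstacle is exactly this uniformity issue: for a general ergodic chain on an infinite countable state space, the convergence $P_{ij}(0,s)\to\pi_j$ need not be uniform in $i$, so tightness of the past position $X_{t-\varepsilon(t)}$ is the essential ingredient that makes the Markov-decoupling argument go through.
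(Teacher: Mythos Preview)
Your proof is correct and follows essentially the same two-step strategy as the paper: first reduce $L^{(1)}_t+L^{(2)}_tR_t$ to the Markov-past proxy $R_{t-\varepsilon(t)}$ via a Slutsky-type argument under the conditional law (the paper splits this into two sub-steps, first $R_t\leadsto R_{t-\varepsilon(t)}$ via a $\delta$-enlargement and then handles $L^{(1)},L^{(2)}$ separately), then decouple the proxy from $\{X_t=j\}$ via the Markov property and ergodicity. Your tightness argument for controlling $E|P_{X_{t-\varepsilon(t)},j}(0,\varepsilon(t))-\pi_j|$ on an infinite state space is in fact more careful than the paper's Step~1, which writes the same sum over $j'\in S$ but simply asserts the limit passage without addressing uniformity.
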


\begin{remark}
Note that for establishing weak convergence it is sufficient to restrict attention to continuity sets, i.e. here sets satisfying $P[R_{\infty}\in \partial A]=0$. 
The part of \eqref{condition2.01} involving $\varepsilon(t)$ resembles Anscombe's condition (\cite{gut2009stopped}, p.~16). 
As seen in the subsequent arguments using Lemma \ref{lem1.5}, the $R_t$ to which Lemma \ref{lem1.5} is applied will be 
$\mathcal{F}^X_{\tau_{g_{t}^{j}}}$ measureable. 
\end{remark}

Using Lemma \ref{lem2} we have shown \eqref{condition1} and in Subsection \ref{verification2.01} it is verified that the conditions 
\eqref{condition2.01} are fulfilled. 
Using Lemma \ref{lem2} in \eqref{z_to_r} together with the aforementioned argument prove that 
$$
\mathcal{L}\big(\big(Q^{(1)}_{t},Q^{(2)}_{t}\big)\mid X_{0}=i, X_{t}=j\big) \stackrel{w}{\to} \delta_{0}\otimes \mathcal{L}\Big(G^{2a,b^2}_{j}\big(T^{j}\big)+e^{-2a(j)T^{j}}X^{*}_{j}\Big),
$$ 
where $T^{j}\sim\exponential(-\lambda_{jj})$ is independent of $X^{*}_{j}$. 
This proves \eqref{toprove4} and concludes the proof of Theorem \ref{P1}.

\subsection{Proof of Lemma \ref{lem1}}

\begin{proof}
The process $(Y_{t})_{t\geq 0}$ in \eqref{model1} has the representation 
\beqn
Y_{t}=\mb{\Phi(0,t)}Y_{0}+\int_{0}^{t}b(X_{s})\mb{\Phi(s,t)}dW_{s}, \quad \mb{\Phi(s,t)}:=e^{-\int_{s}^{t}a(X_{r})dr}.\label{ito1}
\eeqn
Define 
\beqn
(Q^{(1)}_{t},Q^{(2)}_{t}):=\bigg(\mb{\Phi(0,t)},\int_{0}^{t}b^{2}(X_{s})\mb{\Phi^{2}(s,t)}ds\bigg).\label{z_1z_2}
\eeqn
Note that on $\{X_{0}=i,X_{t}=j\}:=\{\omega\in\Omega : X_{0}(\omega)=i,X_{t}(\omega)=j\}$,
\beqn
Q^{(1)}_{t}=\mb{\Phi(0,t)}
=\mb{\Phi(\tau^{j}_{g_{t}^{j}},t)}\bigg(\prod_{l=1}^{g_{t}^{j}}\mb{\Phi(\tau_{l-1}^{j},\tau_{l}^{j})}\bigg)\mb{\Phi(0,\tau_{0}^{j})}
=e^{-a(j)\big(t-\tau^{j}_{g^{_j}_{t}}\big)}\bigg(\prod_{l=1}^{g_{t}^{j}} J^{j}_{l}\bigg)J_{0}^{(i,j)},\label{J-t-1}
\eeqn
where the identity 
$\mb{\Phi(\tau^{j}_{g_{t}^{j}},t)}=e^{-a(j)\big(t-\tau^{j}_{g^{_j}_{t}}\big)}$ on $\{X_{0}=i,X_{t}=j\}$  
follows from the fact that if $s\in (\tau^{j}_{g_{t}^{j}},t)$, then $X_s=j.$

We now consider the second term in \eqref{ito1}. Notice that by Ito isometry 
$$
\mathcal{L}\Big(\int_{0}^{t}b(X_{s})\mb{\Phi(s,t)}dW_{s}\mid X_0=i,X_t=j\Big)
=\mathcal{L}\Big(\Big[\int_{0}^{t}b^2(X_{s})\mb{\Phi}^2(s,t)ds\Big]^{1/2}N\mid X_0=i,X_t=j\Big)
$$
Partitioning $[0,t]$ into 
$[0,t]=[0,\tau_{0}^{j}]\cup \mathop{\cup}_{k=1}^{g_{t}^{j}}(\tau_{k-1}^{j},\tau_{k}^{j}]\cup (\tau_{g_{t}^{j}}^{j},t],$  
we may write $Q^{(2)}_t$ as 
\beqn
 \int_{0}^{t}b^{2}(X_{s})\mb{\Phi^{2}(s,t)}ds=\int_{0}^{\tau_{0}^{j}}b^{2}(X_{s})\mb{\Phi^{2}(s,t)}ds+\sum_{k=1}^{g_{t}^{j}}\int_{\tau_{k-1}^{j}}^{\tau_{k}^{j}}b^{2}(X_{s})\mb{\Phi^{2}(s,t)}ds+\int_{\tau_{g_{t}^{j}}^{j}}^{t}b^{2}(X_{s})\mb{\Phi^{2}(s,t)}ds.\label{mte1}
\eeqn
Expanding one term in the middle sum
\begin{align}
\int_{\tau_{k-1}^{j}}^{\tau_{k}^{j}}b^{2}(X_{s})\mb{\Phi^{2}(s,t)}ds
&=\int_{\tau_{k-1}^{j}}^{\tau_{k}^{j}}\mb{\Phi^{2}(\tau_{k}^{j},t)}b^{2}(X_{s})\mb{\Phi^{2}(s,\tau_{k}^{j})}ds\non\\
&=\mb{\Phi^{2}(\tau_{g_{t}^{j}}^{j},t)}\Big(\prod_{i=k}^{g_{t}^{j}-1} \mb{\Phi^{2}(\tau_{i}^{j},\tau_{i+1}^{j})}\Big)\int_{\tau_{k-1}^{j}}^{\tau_{k}^{j}}b^{2}(X_{s})\mb{\Phi^{2}(s,\tau_{k}^{j})}ds\non
\end{align}
for some $k\le g_{t}^{j}.$ Putting these estimates in \eqref{mte1} we have 
\begin{align*}
\int_{0}^{t}b^{2}(X_{s})\mb{\Phi^{2}(s,t)}ds 
=&\mb{\Phi^{2}_{}(\tau_{g_{t}^{j}}^{j},t)}\bigg[ \Big(\prod_{l=0}^{g_{t}^{j}-1} \mb{\Phi^{2}_{}(\tau_{l}^{j},\tau_{l+1}^{j})}\Big)\int_{0}^{\tau_{0}^{j}}b^{2}(X_{s})\mb{\Phi^{2}(s,\tau_{0}^{j})}ds \\
&\quad\quad\quad\quad\quad+\sum_{k=1}^{g^{j}_{t}}\Big(\prod_{l=k}^{g_{t}^{j}-1} \mb{\Phi}^{2}_{}(\tau_{l}^{j},\tau_{l+1}^{j})\Big)\int_{\tau_{k}^{j}-1}^{\tau_{k}^{j}}b^2(X_s)\mb{\Phi^{2}}_{}(s,\tau_{k}^{j})ds\bigg] \\
&+\int_{\tau_{g_{t}^{j}}^{j}}^{t}b^{2}(X_{s})\mb{\Phi^{2}(s,t)}ds.
\end{align*}
On $\{X_{0}=i,X_{t}=j\}$, 
\begin{align*}
\int_{\tau_{g_{t}^{j}}^{j}}^{t}b^{2}(X_{s})\mb{\Phi^{2}(s,t)}ds
=b^{2}(j)\int_{\tau_{g_{t}^{j}}^{j}}^{t}e^{-2(t-s)a(j)}ds 
=G^{2a,b^2}_{j}(t-\tau_{g_{t}^{j}}^{j}).
\end{align*}
Moreover, on $\{X_{0}=i,X_{t}=j\}$, 
$$
\int_{0}^{\tau_{g_{t}^{j}}^{j}}b^{2}(X_{s})\mb{\Phi^{2}(s,t)}ds
=e^{-2a(j)(t-\tau_{g_{t}^{j}}^{j})}\bigg[\bigg(\prod_{k=1}^{g_{t}^{j}}J^{j}_{k}\bigg)^{2} K^{(i,j)}_{0}
+\sum_{k=1}^{g_{t}^{j}}\bigg(\prod_{l=k+1}^{g_{t}^{j}}J^{j}_{l}\bigg)^{2} K^{j}_{k}\bigg].
$$
Combining the above gives, on $\{X_{0}=i,X_{t}=j\}$,   
\beqn
Q^{(2)}_{t}=G^{2a,b^2}_{j}(t-\tau_{g_{t}^{j}}^{j})
+e^{-2a(j)(t-\tau_{g_{t}^{j}}^{j})}\bigg[\bigg(\prod_{k=1}^{g_{t}^{j}}J_{k}^{j}\bigg)^{2}K^{(i,j)}_{0}
+\sum_{k=1}^{g_{t}^{j}} \bigg(\prod_{l=k+1}^{g_{t}^{j}}J^{j}_{l}\bigg)^{2} K^{j}_{k}\bigg].\label{J-t-2}
\eeqn
The assertion of Lemma \ref{lem1} follows by combining \eqref{J-t-1} and \eqref{J-t-2}. 
\hfill $\square$
\end{proof}

\subsection{Proof of Lemma \ref{lem2}}
In following two subsections we prove Lemma \ref{lem2} by first proving \eqref{toprove3Xj} and then
\begin{align}
 \Big(\prod_{k=1}^{g_{t}^{j}}J_{k}^{j}\Big)J^{(i,j)}_{0} \stackrel{d}{\to} 0, 
 \quad 
 \Big(\prod_{k=1}^{g_{t}^{j}}J_{k}^{j}\Big)^{2} K^{(i,j)}_{0} \stackrel{d}{\to} 0 
 \quad \text{as } t\to\infty.
 \label{toprove3}
\end{align}
 
\subsubsection{Proof of \eqref{toprove3Xj}}

To simplify the notation we omit the superscript $j$ in this subsection. Main technical difficulty here is to obtain a similar version of Vervaat's Theorem 1.5 in \cite{vervaat1979stochastic} but in a continuous time context driven by the renewal time $g_{t}$.  For $n\ge 1$, define 
$$
\widetilde{S}_{n}:= \sum_{k=1}^{n}\bigg(\prod_{i=k+1}^{n}J^{}_{i}\bigg)^{2}K_{k}
=K^{}_{n}+J^{2}_{n}K^{}_{n-1}+\ldots+(J^{}_{2}\ldots J^{}_{n})^{2}K^{}_{1}.
$$ 
Note that $\widetilde{S}_{n+1}=\widetilde{S}_nJ_{n+1}^2+K_{n+1}$ and therefore $(\widetilde{S}_n)_{n\ge 1}$ is Markovian. 
For $n\ge 1$, define
$$
S_{n}:=\sum_{k=1}^{n}\bigg(\prod_{i=1}^{k-1}J_{i}\bigg)^{2}K_{k}=K_{1}+J_{1}^{2}K_{2}+\ldots+(J_{1}\ldots J_{n-1})^{2}K_{n}.
$$ 
Note that $S_{n+1}=S_{n}+\big(J^{2}_{1}\ldots J^{2}_{n}\big) K_{n+1}$ and therefore $(S_{n})_{n\ge 1}$ is not Markovian but $S_n$ is a partial sum of the infinite sum 
$$
S_{\infty}:= \sum_{k=1}^{\infty}\bigg(\prod_{i=1}^{k-1}J_{i}\bigg)^{2} K_{k}.
$$ 
We will show that $\widetilde{S}_{g_{t}}\stackrel{d}{\to}S_{\infty}$ as $t\to \infty$ where $S_{\infty}$ satisfies 
$S_{\infty}\stackrel{d}{=}(J_{1}^{j})^{2}S_{\infty}+K_{1}^{j}$, 
$S_{\infty} \indep \big(J_{1}^{j},K_{1}^{j}\big)$. From uniqueness of the solution we will conclude that 
$\mathcal{L}(S_{\infty})=\mathcal{L}(X^{*}_{j})$. 
We will prove \eqref{toprove3Xj} in the following steps.

\begin{itemize}
\item Step $1:$ $\widetilde{S}_{g_{t}}\stackrel{d}{=}S_{g_{t}}$ for any $t>0$. 
\item Step $2:$ $\frac{\log K_{g_{t+1}}}{g_{t}}\stackrel{\text{a.s.}}{\to} 0$ as $t\to \infty$. 
\item Step $3:$ $S_{\infty}\stackrel{d}{=}\sum_{k=1}^{\infty}\Big(\prod_{i=1}^{k-1}J_{i+g_{t}+1}\Big)^{2}K_{k+g_{t}+1}$.
\item Step $4:$ $\mathcal{L}(S_{\infty})$ is the unique solution of \eqref{toprove3Xj}. 
\end{itemize}
Given steps 1-4, proof of \eqref{toprove3Xj} is completed as follows. Observe by step 1 $\mathcal{L}(S_{g_{t}})=\mathcal{L}(\tilde{S}_{g_{t}});$ and
\begin{align}
S_{\infty}-S_{g_{t}}
=J^{2}_{1}J^{2}_{2}\ldots J^{2}_{g_{t}}K_{g_{t}+1} +J^{2}_{1}J^{2}_{2}\ldots J^{2}_{g_{t}+1}\Big(\sum_{k=1}^{\infty}\Big(\prod_{i=1}^{k-1}J_{i+g_{t}+1}\Big)^{2}K_{k+g_{t}+1}\Big).\label{beta}
\end{align}
We will prove that the quantity in RHS of \eqref{beta} converges to zero in probability as $t\to\infty$. Then the assertion $\tilde{S}_{g_{t}}\stackrel{d}{\to}S_{\infty}$ follows. The first term in the quantity in the expectation may be written as 
$$
J^{2}_{1}J^{2}_{2}\ldots J^{2}_{g_{t}}K_{g_{t}+1}
=\exp\Big\{t\Big(\frac{2\sum_{i=1}^{g_{t}}\log J_{i}}{t}+\frac{\log K_{g_{t}+1}}{t}\Big)\Big\}
=\exp\Big\{t\Big(\frac{2\sum_{i=1}^{g_{t}}\log J_{i}}{g_{t}}+\frac{\log K_{g_{t}+1}}{g_t}\Big)\frac{g_{t}}{t}\Big\}.
$$ 
By the renewal theorem, 
$$
\frac{g_{t}}{t}\stackrel{\text{a.s.}}{\to}\frac{1}{E|I_{1}|}\quad\text{as }t\to\infty
$$ 
and by Assumption \ref{as2}(a), as $t\to\infty$,  
$$
\frac{\sum_{i=1}^{g_{t}}\log J_{i}}{g_{t}}\stackrel{\text{a.s.}}{\to} E\log J_{i} 
= -E\int_{\tau_{0}^{}}^{\tau_{1}^{}}a(X_{s})ds=- E|I_{1}|E_{\pi}a(\cdot)<0.
$$ 
Together with Step 2, we have therefore
$$
\Big(\frac{2\sum_{i=1}^{g_{t}}\log J_{i}}{g_{t}}+\frac{\log K_{g_{t}+1}}{g_t}\Big)\frac{g_{t}}{t}
\stackrel{\text{a.s.}}{\to} -2E_{\pi}a(\cdot)<0\quad \text{as } t\to\infty
$$
from which $J^{2}_{1}J^{2}_{2}\ldots J^{2}_{g_{t}}K_{g_{t}+1}\stackrel{\text{a.s.}}{\to} 0$ as $t\to\infty$ follows.
For the second term in the quantity in the expectation, 
$$
\mathcal{L}\Big(\sum_{k=1}^{\infty}\Big(\prod_{i=1}^{k-1}J_{i+g_{t}+1}\Big)^{2} K_{k+g_{t}+1}\Big)=\mathcal{L}(X^{*}_j)
$$
and 
$$
J^{2}_{1}J^{2}_{2}\ldots J^{2}_{g_{t}}=\exp\Big\{t\Big(\frac{2\sum_{i=1}^{g_{t}}\log J_{i}}{g_{t}}\Big)\frac{g_{t}}{t}\Big\}
\stackrel{\text{a.s.}}{\to} 0\quad \text{as } t\to\infty
$$
from which Slutsky's theorem gives
$$
J^{2}_{1}J^{2}_{2}\ldots J^{2}_{g_{t}+1}\Big(\sum_{k=1}^{\infty}\Big(\prod_{i=1}^{k-1}J_{i+g_{t}+1}\Big)^{2}K_{k+g_{t}+1}\Big)
\to 0\quad \text{in probability as } t\to\infty.
$$
Given Steps 1-4 we have thus shown \eqref{toprove3Xj}. It only remains to prove Steps 1-4. 

\begin{itemize}
\item Step $1:$  
Notice that for each $t>0$, 
$$
\big((J_{1},K_{1}),(J_{2},K_{2}),\ldots,(J_{g_{t}},K_{g_{t}})\big)\stackrel{d}{=}\big((J_{g_{t}},K_{g_{t}}),(J_{g_{t}-1},K_{g_{t}-1}),\ldots,(J_{1},K_{1})\big)
$$
and that $\widetilde{S}_{\tau_{g_{t}}}$ and $S_{\tau_{g_{t}}}$ are the result of applying the same function to the two above identically distributed random vectors. Hence, $\widetilde{S}_{g_{t}}\stackrel{d}{=}S_{g_{t}}$ for any $t>0$. 
\item Step $2:$ We first show $\frac{\log K_{n+1}}{n}\stackrel{a.s}{\to}0$ as a consequence of Borel-Cantelli Lemma and $E\log K_{n}<\infty.$ This is evident as $E\log K^{j}_{n+1}<\infty$ implies
\begin{align*}
&\sum_{n=1}^{\infty}P\big[\log K^{j}_{n+1}>\varepsilon n\big]<\infty \quad\text{for every } \varepsilon>0,\\
&\quad \Leftrightarrow P\big[\log K^{j}_{n+1}>\varepsilon n \text{ i.o.}\big]=0\quad\text{for every } \varepsilon>0,\\
&\quad \Leftrightarrow \frac{1}{n}\log K^{j}_{n+1}\stackrel{\text{a.s.}}{\to} 0\quad\text{as }n\to\infty.
\end{align*}

Then from above step and $g_{t}^{j}\stackrel{\text{a.s.}}{\to}\infty$  Step 2 will follow by observing 
$$\Big\{\frac{\log K_{g_{t}^{j}+1}}{g_{t}^{j}}\not\to 0\Big\}= \Big\{\frac{\log K_{n+1}}{n}\not\to 0\Big\}\cup\{g_{t}^{j}\not\to\infty\}.$$

\item Step $3:$ 
Observe that for any $t>0$,  
\beqn
\big((J_{g_{t}+1},K_{g_{t}+1}),(J_{g_{t}+2},K_{g_{t}+2}),\ldots \big)\stackrel{d}{=}\big((J_{1},K_{1}),(J_{2},K_{2}),\ldots\big).\label{s2}
\eeqn
Notice that $\{g_{t}=n\}=\{\tau_{n}\le t<\tau_{n+1}\}$ is $\sigma\{X_{s}: 0<s <\tau_{n+1}\}$-measurable and independent of $\sigma\{X_{s}: \tau_{n+1}\le s\}$. 
For a sequence of measurable sets $(A_{i})_{i\ge 1}$,  
\begin{align}
&P\big((J_{g_{t}+1},K_{g_{t}+1})\in A_{1},(J_{g_{t}+2},K_{g_{t}+2})\in A_{2},\ldots\big)\non\\ 
&\quad=\sum_{n\ge 0}P\big((J_{g_{t}+1},K_{g_{t}+1})\in A_{1},(J_{g_{t}+2},K_{g_{t}+2})\in A_{2},\ldots\mid g_{t}=n\big)P(g_{t}=n)\non\\
&\quad=\sum_{n\ge 0}P\big((J_{n+1},K_{n+1})\in A_{1},(J_{n+2},K_{n+2})\in A_{2},\ldots\mid g_{t}=n\big)P(g_{t}=n)\non\\
&\quad=\sum_{n\ge 0}P\big((J_{n+1},K_{n+1})\in A_{1},(J_{n+2},K_{n+2})\in A_{2},\ldots \big)P(g_{t}=n)\s\s\text{(by independence)}\non\\
&\quad=\sum_{n\ge 0}P\big((J_{1},K_{1})\in A_{1},(J_{2},K_{2})\in A_{2},\ldots \big)P(g_{t}=n)\non\\
&\quad=P\big((J_{1},K_{1})\in A_{1},(J_{2},K_{2})\in A_{2},\ldots \big)\non
\end{align}
proving \eqref{s2}. A consequence of \eqref{s2} is 
\beqn
S_{\infty}\stackrel{d}{=} \sum_{k=1}^{\infty}\Big(\prod_{i=1}^{k-1}J_{i+g_{t}+1}\Big)^{2}K_{k+g_{t}+1}
=K_{g_{t}+1}+J^{2}_{g_{t}+1}\big(K_{g_{t}+2}+J^{2}_{g_{t}+2}\big(\ldots\big)\big).\label{s2'}
\eeqn
\item Step $4:$ We will prove that $\mathcal{L}(S_{\infty})$ is the unique solution to the distributional fixed point equation in \eqref{toprove3Xj}. 
By step $1,$ $\mathcal{L}\big(\widetilde{S}_{g_{t}}\big)=\mathcal{L}\big(S_{g_{t}}\big)$ holds. To prove the uniqueness of $\mathcal{L}(S_{\infty})$ as the solution to \eqref{toprove3Xj} under Assumption \ref{as1}, observe that 
$$
S_{\infty}:= \sum_{k=1}^{\infty}\Big(\prod_{i=1}^{k-1}J^{2}_{i}\Big) K_{k}
=K_{1}+J^{2}_{1}\Big(\sum_{k=1}^{\infty}\Big(\prod_{i=1}^{k-1}J^{2}_{i+1}\Big)^{} K_{k+1}\Big), 
$$
and 
$$
(K_{1},J_{1})\indep \Big(\sum_{k=1}^{\infty}\Big(\prod_{i=1}^{k-1}J^{2}_{i+1}\Big)^{} K_{k+1}\Big),
\quad \sum_{k=1}^{\infty}\Big(\prod_{i=1}^{k-1}J^{2}_{i+1}\Big)^{} K_{k+1}\stackrel{d}{=}S_{\infty}.
$$ 
So $S_{\infty}$ satisfies \eqref{toprove3Xj}. Now from Lemma 1.4(a) (which applies since $E\log J <0$ under Assumption \ref{as1}) and Vervaat's Theorem 1.5 in \cite{vervaat1979stochastic} (or Theorem 2.1 in \cite{goldie2000stability}) we conclude that the solution to \eqref{toprove3Xj} is unique and hence the assertion follows.
\end{itemize}

\begin{remark}We acknowledge that Lemma \ref{lem2} can be proved alternatively using Anscombe's theorem (page 16 of  \cite{gut2009stopped})  for any sequence of stopping times $g_{t}\to\infty$, and $R_{\tau_{n}}\stackrel{d}{\to}(0,X_{j}^{*})$. However, in order to conclude that $R_{t}\stackrel{d}{\to}(0,X_{j}^{*})$ one needs to verify Anscombe's conditions for $(R_{\tau_{n}})_{n\ge 1}$ leading to arguments similar to steps 1-4 above which are similar to the arguments in the proof of Varvaat's Theorem 1.5 in \cite{vervaat1979stochastic}.
\end{remark}

\subsubsection{Proof of \eqref{toprove3}}

We begin by showing that $\log^{}|J^{(i,j)}_{0}|=O_{P}(1)$ and $\log^{}|K^{(i,j)}_{0}|=O_{P}(1)$. Let, for arbitrary $i,j\in S$,  $A^{(k)}_{ij}$ denote the event that $X$ visits the state $j$ in $k$-th excursion from state $i$ to itself (i.e in time interval $(\tau_{k-1}^{i},\tau_{k}^{i}]$). Clearly $0<P\big(A^{(k)}_{ij}\big)<1$ for any $k\in\mathbb{N}$ since $X$ is irreducible in $S$. Note that on $\{X_{0}=i\}$, 
$$
\log^{}|J^{(i,j)}_{0}|=-\int_{0}^{\tau_{0}^{j}}a(X_{s})ds\le \sum_{l=1}^{k^{*}-1}\log J_{k}^{i}+ \int_{\tau_{k^{*}-1}^{i}}^{\tau_{k^{*}}^{i}}|a(X_{s})|ds,
\quad k^{*}=\inf\Big\{k\ge 1: 1_{\{A^{(k)}_{ij}\}}=1\Big\}.
$$ 
Since the upper bound is a geometric sum of random variables having finite expectation we have shown that $\log^{}|J^{(i,j)}_{0}|=O_{P}(1)$.
Similarly, on $\{X_{0}=i\}$,  
$$
K^{(i,j)}_{0}=\int_{0}^{\tau_{0}^{j}}b^{2}(X_{s})e^{-2\int_{s}^{\tau_{0}^{j}}a(X_{r})dr}ds=e^{(\tau_{0}^{j}- \tau_{k^{*}}^{i})}\sum_{l=1}^{k^{*}-1}\Big(\prod_{n=l+1}^{k^{*}}J_{n}^{i}\Big)^{2} K_{l}^{i}+\int_{\tau_{k*-1}^{i}}^{\tau_{0}^{j}}b^{2}(X_{s})e^{-2\int_{s}^{\tau_{0}^{j}}a(X_{r})dr}ds.
$$
Note that $(\tau_{k*-1}^{i},\tau_{0}^{j}]\subset(\tau_{k*-1}^{i},\tau_{k*}^{i}]$ and that 
$$
\sum_{l=1}^{\infty}\Big(\prod_{n=l+1}^{\infty}J_{n}^{i}\Big)^{2}
$$
is absolutely convergent due to Assumption \ref{as2}. Hence, $\log^{}|K^{(i,j)}_{0}|=O_{P}(1)$.

Note that 
$$
\bigg(\prod_{k=1}^{g_{t}^{j}-1}J_{k}^{j}\bigg)J^{(i,j)}_{0}=\exp\Big\{t\Big(\frac{\sum_{k=1}^{g^{j}_{t}-1}\log J^{j}_{k}}{g^{j}_{t}}+\frac{\log J^{(i,j)}_{0}}{g_t}\Big)\frac{g^{j}_{t}}{t}\Big\}.
$$ 
Applying the renewal theorem and law of large number in the renewal context gives, by Assumption \ref{as2}(a),  
$$
\frac{g_{t}}{t}\stackrel{\text{a.s.}}{\to}\frac{1}{E|I_{1}|},\quad\,\,\,\,\,\,\frac{\sum_{k=1}^{g^{j}_{t}-1}\log J^{j}_{k}}{g^{j}_{t}}\stackrel{\text{a.s.}}{\to} E\log J_{1}^{j} <0\quad \text{as } t\to\infty.
$$ 
Moreover,  
$\log J^{(i,j)}_{0}=O_{P}(1)$ implies 
$$
\frac{\log J^{(i,j)}_{0}}{g_t}\stackrel{\text{a.s.}}{\to} 0\quad \text{as } t\to\infty.
$$ 
Putting the pieces together yields the first statement of \eqref{toprove3}. 
Analogous arguments proves the second statement of \eqref{toprove3}:
$$
\Big(\prod_{k=1}^{g_{t}^{j}-1}J_{k}^{j}\Big)^{2} K^{(i,j)}_{0}
=\exp\Big\{t\Big(\frac{2\sum_{k=1}^{g^{j}_{t}-1}\log J^{j}_{k}}{g^{j}_{t}}+\frac{\log K^{(i,j)}_{0}}{g_t}\Big)\frac{g^{j}_{t}}{t}\Big\}\stackrel{\text{a.s.}}{\to}0\quad \text{as } t\to\infty.
$$
The proof of \eqref{toprove3} is complete and therefore also the proof Lemma \ref{lem2} is complete.
\hfill $\square$
\end{proof}

\subsection{Proof of Lemma \ref{lem1.5}}

\begin{proof}
The main assertion will follow if we prove that, for any $A$ whose boundary $\partial A$ satisfies $P\big[R_{\infty}\in \partial A\big]=0$,   
\begin{align}
\lim_{t\to\infty}P\big[R_{t}\in A, X_{t}=j\big] =\pi_{j}\lim_{t\to\infty}P\big[R_{t}\in A\big]=\pi_{j}P\big[R_{\infty}\in A\big].\label{cond0.02}
\end{align}
Then, by Slutsky's theorem, 
$$
\lim_{t\to\infty}P\big[L^{(1)}_{t}+L^{(2)}_{t}R_{t}\in A, X_{t}=j\big] =\lim_{t\to\infty}P\big[R_{t}\in A, X_{t}=j\big]. 
$$
We prove \eqref{cond0.02} in the following two steps.
\begin{itemize}
\item Step 1: $\lim_{t\to\infty}P\big[R_{t-\varepsilon(t)}\in A\mid X_{t}=j\big]=P[R_{\infty}\in A].$
\item Step 2: $\lim_{t\to\infty}P\big[R_{t}\in A\mid X_{t}=j\big]=\lim_{t\to\infty}P\big[R_{t-\varepsilon(t)}\in A\mid X_{t}=j\big].$ 
\end{itemize}
\textbf{Step 1}:
Notice that 
\begin{align*}
P\big[R_{t-\varepsilon(t)}\in A\mid X_{t}=j\big]
&=\sum_{j'\in S}P\big[R_{t-\varepsilon(t)}\in A\mid X_{t-\varepsilon(t)}=j', X_{t}=j\big]\frac{P\big[X_{t-\varepsilon(t)}=j', X_{t}=j\big]}{P[X_{t}=j]}\\
&= \sum_{j'\in S}P\big[R_{t-\varepsilon(t)}\in A\mid X_{t-\varepsilon(t)}=j'\big]\frac{P\big[X_{t}=j\mid X_{t-\varepsilon(t)}=j'\big]P[X_{t-\varepsilon(t)}=j']}{P[X_{t}=j]}\\
&= \sum_{j'\in S}P\big[R_{t-\varepsilon(t)}\in A, X_{t-\varepsilon(t)}=j'\big]\frac{P_{j'j}(0,\varepsilon(t))}{P[X_{t}=j]}. 
\end{align*}
The second equality follows by observing $R_{t-\varepsilon(t)}$ is conditionally independent of $X_t$ given $X_{t-\varepsilon(t)}$.
The third equality follows as $X$ is a time homogenous continuous time Markov chain. By ergodicity of $X$, 
$\lim_{t\to\infty}P_{j'j}(0,\varepsilon(t))/P[X_{t}=j]=1$. Since $R_{t-\varepsilon(t)}\stackrel{d}{\to}R_{\infty}$ as $t\to\infty$ the proof of Step 1 is complete. 

\textbf{Step 2}: Take any $\delta>0$ and define $A^{\delta}:=\{x: d(x,A)<\delta\},A^{-\delta}:=\{x\in A: d(x,A^c)<\delta\}$ and note that $A^{-\delta}\subseteq A\subseteq A^{\delta}$. Denoting $B_{t}:=R_{t}-R_{t-\varepsilon(t)}$ one has 
\begin{align}
P\big[R_{t}\in A \mid X_{t}=j\big]=P\big[R_{t-\varepsilon(t)}+B_{t}\in A, |B_{t}|\le \delta \mid X_{t}=j\big]+C_{t}\label{cond0.03}
\end{align}
where 
$$
\limsup_{t\to\infty}C_{t}\le \limsup_{t\to\infty}\frac{P[|B_{t}|>\delta]}{P[X_{t}=j]}=0.
$$  
The first term on the right-hand side in \eqref{cond0.03} satisfies 
$$
P\big[R_{t-\varepsilon(t)}\in A^{-\varepsilon}\mid X_{t}=j\big] \le P\big[R_{t-\varepsilon(t)}+B_{t}\in A, |B_{t}|\le \varepsilon \mid X_{t}=j\big]\le P\big[R_{t-\varepsilon(t)}\in A^{\varepsilon}\mid X_{t}=j\big].
$$ 
The assertion follows from Step 1 together with letting $\delta\to 0$.
\hfill $\square$

\end{proof}
\subsection{Verification of $R_{t}$ in \eqref{condR_t} satisfying \eqref{condition2.01} in Lemma \ref{lem1.5}}\label{verification2.01}
\begin{proof} Note that for $t>\tau_{1}^{j}$, on $\{X_{0}=i,X_{t}=j\}$ $R_{t}$ is composed of $\{(J_{k}^{j},K_{k}^{j}):k=1,\ldots,g_{t}; J_{0}^{(i,j)},K_{0}^{(i,j)}\}$. 
By Lemma \ref{lem2}, $R_{\infty}\stackrel{d}{=}(0,X^{*}_{j})$. We will show the condition $R_{t}- R_{t-\varepsilon(t)} \stackrel{P}{\to}0$ for an increasing function $\varepsilon$ such that $\lim_{t\to\infty}\varepsilon(t)=\infty$ and $\lim_{t\to\infty}\varepsilon(t)/t=0$. Note that from \eqref{condR_t}, 
$$
R_{t}- R_{t-\varepsilon(t)}=\big(R^{(1)}_{t}- R^{(1)}_{t-\varepsilon(t)},R^{(2)}_{t}- R^{(2)}_{t-\varepsilon(t)}\big)
$$ 
and the conclusion holds if we prove $R^{(i)}_{t}- R^{(i)}_{t-\varepsilon(t)}\stackrel{P}{\to} 0$ as $t\to\infty$ for $i=1,2$. Note that 
$$
R^{(1)}_{t}- R^{(1)}_{t-\varepsilon(t)}=\exp\bigg\{\sum_{k=1}^{g^j_{t-\varepsilon(t)}} \log J_{k}^{j}\bigg\}
\bigg[\exp\bigg\{\sum_{k=g^j_{t-\varepsilon(t)}+1}^{g^j_{t}} \log J_{k}^{j}\bigg\}-1\bigg]J_{0}^{(i,j)}.
$$ 
where by virtue of $E\log J_{k}^{j}<0.$ 
The exponents in the first two factors are partial sums of a random walk with negative drift, since $E\log J_{k}^{j}<0$. Hence, the first factor goes to $1$ and the second to $0$ almost surely by the law of large numbers in the renewal setting: 
$$
\frac{\sum_{k=1}^{g^j_{t-\varepsilon(t)}} \log J_{k}^{j}}{g^j_{t-\varepsilon(t)}}\stackrel{\text{a.s.}}{\to} E\log J_{k}^{j}<0,\quad \frac{\sum^{g^j_{t}}_{k=g^j_{t-\varepsilon(t)}+1} \log J_{k}^{j}}{g^j_{t}-g^j_{t-\varepsilon(t)}}\stackrel{\text{a.s.}}{\to} E\log J_{k}^{j}<0
\quad \text{as } t\to\infty
$$
since 
$$
\varepsilon(t)\to\infty, \quad t- \varepsilon(t)\to\infty, \quad \frac{g^j_{t}-g^j_{t-\varepsilon(t)}}{\varepsilon(t)}\stackrel{\text{a.s.}}{\to} \frac{1}{E|I^{j}|}
\quad \text{as } t\to\infty.
$$ 
Regarding $R^{(2)}_{t}- R^{(2)}_{t-\varepsilon(t)}$, applying arguments of Step 1 in Lemma \ref{lem2} one has 
$$
\begin{pmatrix} R^{(2)}_{t} \\  R^{(2)}_{t-\varepsilon(t)}\end{pmatrix}
=\begin{pmatrix} 
\big(\prod_{k=1}^{g_{t}^{j}}J_{k}^{j}\big)^{2}K^{(i,j)}_{0}+\tilde{S}_{g_{t}}\\ 
\big(\prod_{k=1}^{g_{t-\varepsilon(t)}^{j}}J_{k}^{j}\big)^{2}K^{(i,j)}_{0}+\tilde{S}_{g_{t-\varepsilon(t)}}
\end{pmatrix} 
\stackrel{d}{=}\begin{pmatrix}
\big(\prod_{k=1}^{g_{t}^{j}}J_{k}^{j}\big)^{2}K^{(i,j)}_{0}+S_{g^j_{t}}\\ \big(\prod_{k=1}^{g_{t-\varepsilon(t)}^{j}}J_{k}^{j}\big)^{2}K^{(i,j)}_{0}+S_{g^j_{t-\varepsilon(t)}}
\end{pmatrix}.
$$
Combining $\big(\prod_{k=1}^{g_{t}^{j}}J_{k}^{j}\big)^{2}K^{(i,j)}_{0}\stackrel{\text{a.s.}}{\to} 0$ and $S_{g^j_{t}}\stackrel{P}{\to} S_{\infty}$ as $t\to\infty$ from \eqref{beta} of Lemma \ref{lem2}, $S_{g^j_{t}}-S_{g^j_{t-\varepsilon(t)}}\stackrel{P}{\to} 0$ as $t\to\infty$. 
Putting the pieces together yields 
$R^{(2)}_{t}- R^{(2)}_{t-\varepsilon(t)}\stackrel{d}{\to}0$ as $t\to\infty$ which completes the verification.
\hfill$\square$
\end{proof}

\subsection{Proof of Corollary \ref{Cor1} and the proposition in Remark \ref{Levy}}

\begin{proof}
Notice that 
\begin{align*}
P\big[F_t \in A\mid X_{0}=i\big]&=\sum_{j\in S}P\big[F_t\in A, X_{t}=j \mid X_{0}=i \big]\\
&=\sum_{j\in S} P_{ij}(0,t) P\big[F_t\in A \mid X_{0}=i, X_{t}=j\big].
\end{align*}
On $\{X_0=i,X_t=j\}$, 
\begin{align*}
F_t&=\int_{0}^{\tau_{0}^{j}}d(X_{s})e^{-\int_{s}^{\tau_{0}^{j}}c(X_{r})dr}ds\\
&\quad+\sum_{k=1}^{g^{j}_{t}}e^{-\int_{\tau_{g^{j}_{t}}}^{t}c(X_r)dr}\Big(\prod_{i=k}^{g^{j}_{t}-1}
e^{-\int_{\tau^{j}_{i}}^{\tau_{i+1}^{j}}c(X_{r})dr}
\Big)\int_{\tau^{j}_{k-1}}^{\tau_{k}^{j}}d(X_{s})e^{-\int_{s}^{\tau_{k}^{j}}c(X_{r})dr}ds\\
&\quad+\int_{\tau^{j}_{g^{j}_{t}}}^{t}d(X_{s})e^{-\int_{s}^{\tau_{0}^{j}}c(X_{r})dr}ds.
\end{align*}
Therefore, upon redefining $(J^{j}_k,K^{j}_k)$ and $K^{(i,j)}_{0}$ as 
\begin{align*}
(J^{j}_{k},K^{j}_{k})&\stackrel{}{:=}\Big(e^{-\int_{\tau_{k-1}^{j}}^{\tau_{k}^{j}}c(X_{s})ds},\int_{\tau_{k-1}^{j}}^{\tau_{k}^{j}}d(X_{s})e^{-\int_{s}^{\tau_{k}^{j}}c(X_{r})dr}ds\Big),\\
K^{(i,j)}_{0}&\stackrel{}{:=}\int_{0}^{\tau_{0}^{j}}d(X_{s})e^{-\int_{s}^{\tau_{0}^{j}}c(X_{r})dr}ds,
\end{align*}
notice that, on $\{X_0=i,X_t=j\}$, 
\begin{align*}
F_t=G^{c,d}_{j}(t-\tau_{g_{t}^{j}}^{j}) +e^{-c(j)(t-\tau_{g_{t}^{j}}^{j})}\Big[\Big(\prod_{k=1}^{g_{t}^{j}}J_{k}^{j}\Big)^{}K^{(i,j)}_{0}+\sum_{k=1}^{g_{t}^{j}}\Big(\prod_{l=k+1}^{g_{t}^{j}}J^{j}_{l}\Big)^{} K^{j}_{k}\Big].
\end{align*}
Now the assertion will follow by applying the same arguments as in the proof of Theorem \ref{P1}. 
Similar arguments to those used to prove Lemma \ref{lem2} show that $F_{g^{j}_{t}}\stackrel{d}{\to} X^{*}_{j}$ as $n\to\infty$, where $X^{*}_{j}$ satisfy \eqref{x=ax+b} with $(A,B)$ as in \eqref{MP1e21cd}. Existence and uniqueness are ensured by Assumption \ref{as22}. 

For the L\'evy driven model in \eqref{levy} the statement of Remark \ref{Levy} can be proven upon minor modifications of the proof of Theorem \ref{P1} since the pairs 
$$
\Big(e^{-\int_{\tau_{i-1}^{j}}^{\tau_{i}^{j}}a(X_{r})dr}, \int_{\tau_{i-1}^{j}}^{\tau_{i}^{j}}b^{}(X_{s})e^{-\int_{s}^{\tau_{i}^{j}}a(X_{r})dr}dL_s\Big)
$$ 
form an i.i.d. sequence. This holds due to the stationary and independent increment property of L\'evy processes:
$$
L_{\tau_{i}^{j}}-L_{\tau_{i-1}^{j}}
\stackrel{d}{=} L_{\tau_{i}^{j}- \tau_{i-1}^{j}}
\stackrel{d}{=}L_{\tau_{1}^{j}- \tau_{0}^{j}}
\stackrel{d}{=}L_{\tau_{1}^{j}}-L_{\tau_{0}^{j}}.
$$
Similarly for the added continuity factor on $\{X_{0}=i,X_{t}=j\}$ one has 
\begin{align*}
\int_{\tau_{g_{t}^{j}}^{t}}^{t}b(X_{s})e^{\int_{s}^{t}a(X_{r})dr}dL_{s}=b(j)\int_{\tau_{g_{t}^{j}}^{t}}^{t}e^{-a(j)(t-s)}dL_{s}\stackrel{d}{=}b(j)\int_{0}^{t-\tau_{g_{t}^{j}}^{j}}e^{-a(j)(t- \tau_{g_{t}^{j}}^{j} -y)}dL_{y}
\end{align*}
where last equality follows by observing $L_{s}-L_{\tau_{g_{t}^{j}}^{j}}\stackrel{d}{=}L_{s-\tau_{g_{t}^{j}}^{j}}.$ 
\hfill $\square$
\end{proof}

\subsection{Proof of Theorem \ref{T2}(a)}
\begin{proof}
Let 
$\widetilde{Y}_{t}=Y_{0}+\int_{0}^{t}b(X_{s})\mb{\Phi}^{-1}(0,s)dW_{s}$
and note that $Y_{t}=\mb{\Phi}(0,t)\widetilde{Y}_{t}$.
For any $A\in\mathcal{B}(\R)$,
\begin{align*}
P\big[|Y_{t}|^{\frac{1}{\sqrt{t}}}\in A\mid X_{0}=i, Y_{0}=y_{0}\big] 
&=\sum_{j\in S}P\big[|Y_{t}|^{\frac{1}{\sqrt{t}}}\in A, X_{t}=j\mid X_{0}=i, Y_{0}=y_{0}\big]\\
&=\sum_{j\in S}P_{ij}(0,t)P\big[|Y_{t}|^{\frac{1}{\sqrt{t}}}\in A\mid X_{0}=i, X_{t}=j, Y_{0}=y_{0}\big]\\
&=\sum_{j\in S}P_{ij}(0,t)P\big[\mb{\Phi}(0,t)^{\frac{1}{\sqrt{t}}} |\widetilde{Y}_{t}|^{\frac{1}{\sqrt{t}}}\in A\mid X_{0}=i, X_{t}=j, Y_{0}=y_{0}\big].
\end{align*}
We will let $t\to\infty$ and determine the limit. 
On $\{X_{0}=i,X_{t}=j\}$, 
$$
\mb{\Phi}(0,t)=e^{-a(j)\big(t-\tau^{j}_{g^{j}_{t}}\big)}\Big(\prod_{l=1}^{g_{t}^{j}} J^{j}_{l}\Big)J_{0}^{(i,j)}.
$$
Therefore, on $\{X_{0}=i,X_{t}=j\}$, 
\beqn
\frac{\mb{\Phi}(0,t)^{\frac{1}{\sqrt{t}}}}{e^{^{-\sqrt{t}E_{\pi}a(\cdot)}}}=
\exp\Big\{\frac{\log J_{0}^{(i,j)}}{\sqrt{t}}-a(j)\frac{t-\tau_{g_{t}^{j}}^{j}}{\sqrt{t}}\Big\} 
\exp\Big\{\frac{\sum_{l=1}^{g_{t}^{j}}\log J_{l}^{j}}{\sqrt{t}}+\sqrt{t}E_{\pi}a(\cdot)\Big\}\label{tre1}
\eeqn
Since both $\log J_{0}^{(i,j)}$ and $t-\tau_{g_{t}^{j}}^{j}$ are $O_{P}(1)$ the first factor in the product on the right-hand side goes to $1$ in probability as $t\to\infty$. 
Note that, by the ergodic theorem, $E\log J_{i}^{j}=-E|I_{j}|E_{\pi}a(\cdot)$.  By the renewal theorem, 
$$
\frac{g^{j}_{t}}{t_{}}\stackrel{\text{a.s.}}{\to} \frac{1}{E|I^{j}_1|}\quad\text{as } t\to\infty
$$
and the second factor in the product on the right-hand side of \eqref{tre1} satisfies
\beqn
e^{M_{t}}:=\exp\Big\{\frac{\sum_{l=1}^{g_{t_{}}^{j}}\log J_{l}^{j}}{\sqrt{t_{}}}+\sqrt{t_{}}E_{\pi}a(\cdot)\Big\}\stackrel{d}{\to}
\exp\bigg\{\frac{\sigma_{j}}{\sqrt{E|I^{j}_1|}}N\bigg\}\quad\text{as } n\to\infty,\label{tre11}
\eeqn
where $N\sim\normal(0,1)$, by applying the central limit theorem for the renewal reward process (when the reward is $\log J_{l}^{j}$ for the $l$-th interval $I^{j}_{l}$) (see Theorem 2.2.5 of \cite{tijms2003first}) together with the continuous mapping theorem using the map $x\mapsto e^{x}$. 
 
We now consider the conditional distribution of $|\widetilde{Y}_t|$ appearing in the product $\mb{\Phi}(0,t)^{\frac{1}{\sqrt{t}}} |\widetilde{Y}_{t}|^{\frac{1}{\sqrt{t}}}$. 
\begin{align*}
&P\Big[|\widetilde{Y}_t|\in \cdot\mid X_{0}=i, X_{t}=j, Y_{0}=y_{0}\Big]\\
&\quad=P\Big[\Big|y_0+\Big(\mb{\Phi}^{-2}(0,t)\int_{0}^{t}b^{2}(X_{s})\mb{\Phi}^{2}(s,t)ds\Big)^{1/2}N\Big|\in\cdot\mid X_{0}=i, X_{t}=j\Big]\\
&\quad=P\Big[\Big|y_0+\Big(\frac{G^{2a,b^2}_{j}(t-\tau_{g_{t}^{j}}^{j})}{\mb{\Phi}^{2}(0,t)}+\frac{K_{0}^{(i,j)}+S_{g_{t}^{j}}^{*}}{\big(J_{0}^{(i,j)}\big)^{2}}\Big)^{1/2}N\Big|\in\cdot\mid X_{0}=i, X_{t}=j\Big],
\end{align*} 
where $N\sim\normal(0,1)$, $N\indep X$ and 
$$
S^{*}_{n}:=\sum_{i=1}^{n}\frac{K^{j}_{i}}{(J^{j}_{i})^2}\prod_{k=1}^{i-1}\Big(\frac{1}{J^{j}_{k}}\Big)^{2}, \quad n\ge 1.
$$ 
Applying similar arguments as those in Step 4 in the proof of Lemma \ref{lem2}, using 
$$
E\log\frac{1}{(J^{j}_{1})^2}<0, \quad E\log^{+}\frac{K^{j}_1}{(J^{j}_{1})^2}<\infty,
$$
we find that 
\begin{align*}
S^{*}_{g^{j}_{t}} \stackrel{d}{\to} S^{*} \quad\text{as } n\to\infty, \quad 
S^{*}\stackrel{d}{=} \frac{1}{(J^{j}_{1})^2}S^{*}+\frac{K^{j}_{1}}{(J^{j}_{1})^2},\quad 
S^{*}\indep \Big(\frac{1}{J^{j}_{1}},\frac{K^{j}_{1}}{(J^{j}_{1})^2}\Big).
\end{align*}
In the transient regime, 
$$
\frac{\log|\mb{\Phi}^{2}(0,t)|}{t}\stackrel{\text{a.s.}}{\to}-2E_{\pi}a(\cdot)>0\quad \text{as } t\to\infty,
$$
so $\mb{\Phi}^{}(0,t)$ diverges exponentially as $t\to\infty$, and since $G^{2a,b^2}_{j}(t-\tau_{g^{j}_{t}}^{j})$ is $O_{P}(1)$, 
applying similar arguments as those proving \eqref{condition2} shows that 
\begin{align}
P\Big[\log |\widetilde{Y}_{t_{}}|\in \cdot \mid X_{0}=i\Big]\stackrel{w}{\to} 
P\Big[\log\Big| y_{0}+\Big(\frac{K_{0}^{(i,j)}+S_{}^{*}}{(J_{0}^{(i,j)})^{2}}\Big)\Big|\in \cdot\Big]
\quad\text{as }t\to\infty.\label{tre12}
\end{align}
Since $P\big[(J^{j}_{1})^{-1}=0\big]=0$, applying Theorem 1.3 of \cite{alsmeyer2009distributional} shows that the distribution of $S^{*}$ is absolutely continuous. From \eqref{tre12} follows that 
$$
\frac{\log|\widetilde{Y}_{t_{}}|}{\sqrt{t_{}}} \stackrel{P}{\to} 0 \quad \text{as } t\to \infty
$$
which implies, using Lemma \ref{lem1.5}, that
$$
P\Big[|\widetilde{Y}_t^{\frac{1}{\sqrt{t}}}| \in\cdot \mid X_0=i,X_{t}=j\Big]\stackrel{w}{\to} \delta_{1}(\cdot)\quad\text{as }t\to\infty.
$$
Combining \eqref{tre11} and the consequence of \eqref{tre12} gives 
\begin{align}
P\Big[\frac{|Y_{t_{}}|^{\frac{1}{\sqrt{t_{}}}}}{e^{^{-\sqrt{t_{}}E_{\pi}a(\cdot)}}}\in\cdot\mid X_{0}=i, Y_{0}=y_{0}\Big]\stackrel{w}{\to}
P\Big[\exp\Big\{\frac{\sigma_{j}}{\sqrt{E|I^{j}_{1}|}}N\Big\}\in\cdot\Big]
\quad\text{as }t\to\infty,\label{tre14}
\end{align} 
Therefore, using similar arguments as those used in Lemma \ref{lem1.5}, taking 
$$
L_{t}^{(1)}:=0,\quad L_{t}^{(2)}:=\exp\Big\{\frac{\log J_{0}^{(i,j)}}{\sqrt{t}}-a(j)\frac{t-\tau_{g_{t}^{j}}^{j}}{\sqrt{t}}+\frac{\log|\tilde{Y}_{t}|}{\sqrt{t}}\Big\}, \quad\text{and}\quad R_{t}:=e^{M_{t}}
$$ 
in the statement of Lemma \ref{lem1.5}, \eqref{tre14} implies 
\begin{align}
P\Big[\frac{|Y_{t}|^{\frac{1}{\sqrt{t}}}}{e^{^{-\sqrt{t}E_{\pi}a(\cdot)}}}\in\cdot\mid X_{0}=i,X_{t}=j,Y_{0}=y_{0}\Big]
\stackrel{w}{\to}
P\Big[\exp\Big\{\frac{\sigma_{j}}{\sqrt{E|I^{j}_{1}|}}N\Big\}\in\cdot\Big]
\quad\text{as }t\to\infty\label{tre13}
\end{align} 
given the conditions of Lemma \ref{lem1.5} hold for $e^{M}$ or $M$. That is, given that 
\beqn
M_{t}-M_{t-\varepsilon(t)}\stackrel{P}{\to}0 \quad\text{as } t\to\infty \label{Mverific}
\eeqn 
for some increasing function $\varepsilon(t)$ such that $\frac{\varepsilon(t)}{t}\to 0$ as $t\to\infty$. 
The verification of these conditions is done in Subsection \ref{T2ver}. Using \eqref{tre13}
along with $\lim_{t\to\infty}P_{ij}(0,t)=\pi_{j}$ in the following display for any $x\in\R$
\begin{align*}
&P\Big[\frac{1}{\sqrt{t}}\log|Y_{t}| +\sqrt{t}E_{\pi}a(\cdot)\in (x,\infty)\mid X_{0}=i,Y_{0}=y_{0}\Big]\\
&\quad=\sum_{j\in S}P_{ij}(0,t)P\Big[\frac{1}{\sqrt{t}}\log|Y_{t}| +\sqrt{t}E_{\pi}a(\cdot)\in (x,\infty)\mid X_{0}=i, X_{t}=j, Y_{0}=y_{0}\Big]
\end{align*}
the assertion of Theorem \ref{T2}(a) follows by taking the limit of the above expression as $t\to\infty$.
\hfill$\square$
\end{proof}

\subsubsection{Verification that $M_{t}$ in \eqref{tre11} satisfies \eqref{condition2.01} of Lemma \ref{lem1.5}}\label{T2ver}
All conditions hold trivially except \eqref{Mverific} which is proved below.

\begin{proof} 
Take $\varepsilon(t):=\sqrt{t}$. 
Define $\widetilde{M}_{t} :=\frac{\sum_{i=1}^{g_{t}^{j}}Y^{*}_{i}}{\sqrt{t}}$, where $Y^{*}_{k}:=\log J^{j}_{k}-E\log J^{j}_{k}$ for $k=1,\ldots,g_{t}^{j}$ is a sequence of random variables with zero means. Note that it is sufficient to prove condition \eqref{condition2.01} for $\widetilde{M}_{t}$ since 
$$
M_{t}=\widetilde{M}_{t}- \frac{E\tau_{0}+(t-E\tau_{g_{t}})E_{\pi}a(\cdot)}{\sqrt{t}}
$$ 
and the second term in the right-hand side above is $o_{_{P}}(1).$ Now observe 
$$
\widetilde{M}_{t}-\widetilde{M}_{t-\sqrt{t}}
=\frac{\sum_{i=1}^{g^{j}_{t-\sqrt{t}}}Y_{i}}{\sqrt{t}}\Big[1-\Big(1-\frac{1}{\sqrt{t}}\Big)^{-\frac{1}{2}}\Big]+\frac{\sum_{i=g^{j}_{t-\sqrt{t}}+1}^{g^{j}_{t}}Y_{i}}{\sqrt{t}}.
$$
Using the asymptotic approximation 
$$
\big(1-\frac{1}{\sqrt{t}}\big)^{-\frac{1}{2}}=1+\frac{1}{2\sqrt{t}}+O(t^{-1})
$$ 
shows that
$$
\frac{\sum_{i=1}^{g^{j}_{t-\sqrt{t}}}Y_{i}}{\sqrt{t}}\Big[1-\big(1-\frac{1}{\sqrt{t}}\big)^{-\frac{1}{2}}\Big]
=\frac{\sum_{i=1}^{g^{j}_{t-\sqrt{t}}}Y_{i}}{2t}+o_{_P}(t^{-\frac{1}{2}})\stackrel{P}{\to}EY^{*}_{k}=0
$$ 
by the renewal version of the law of large numbers. By a similar argument 
$$
\frac{\sum_{i=g^{j}_{t-\sqrt{t}}+1}^{g^{j}_{t}}Y^{*}_{i}}{\sqrt{t}}
=\frac{\sum_{i=g^{j}_{t-\sqrt{t}}+1}^{g^{j}_{t}}Y^{*}_{i}}{g^{j}_{t} - g^{j}_{t-\sqrt{t}}}\frac{g^{j}_{t} - g^{j}_{t-\sqrt{t}}}{\sqrt{t}} \stackrel{\text{a.s.}}{\to}\frac{1}{E|I_{j}|} EY^{*}_{k}=0.
$$ 
The verification is complete.
\hfill$\square$
\end{proof}

\subsection{Proof of Theorem \ref{T2}(b)}

Before proving Theorem \ref{T2}(b) we introduce some notation useful for random walk related arguments. Fix $j\in S$. Under $E_{\pi}a(\cdot)=0$, the one dimensional random walk $S^{\otimes j}_{n}:=\sum_{k=1}^{n}\log J_{k}^{j}$, $S^{\otimes j}_{0}=0$, is null-recurrent. Recall that $\log J_{k}^{j}=-\int_{\tau^{j}_{k-1}}^{\tau^{j}_{k}}a(X_{s})ds$ with mean $-E_{\pi}a(\cdot)E|I^{j}_1|=0$ and variance $\sigma^{2}_{j}$. For this random walk we define the sequence $(Z^{\otimes j}_{k})_{k\ge 1}$ of ladder variables as follows. Define the sequence $(T^{\otimes j}_{k})_{k\ge 1}$ such that 
$$
T^{\otimes j}_{1}:=\inf\{m:S^{\otimes}_{m}>0\},\quad T^{\otimes j}_{k}:=\inf\{m>T^{\otimes j}_{k-1}: S^{\otimes j}_{m}>S^{\otimes j}_{T_{k-1}}\}.
$$
The ladder variables are defined as $Z^{\otimes j}_{k}:=S^{\otimes j}_{T^{\otimes j}_{k}} - S^{\otimes j}_{T^{\otimes j}_{k-1}}$.
Let $M^{\otimes j}_{n}:=\max_{1\le k \le n}\sum_{i=1}^{k}\log J_{i}^{j}$.

\begin{proof}
We begin by expanding \eqref{ito1} differently from what was done in the proof of Theorem \ref{T2}(a). Since $E_{\pi}a(\cdot)=0$, instead of extracting $\mb{\Phi}(0,t)$ as in the proof of Theorem \ref{T2}(a), we will extract $Z_{n}^{*}:=\max_{1\le k\le n} \Big(\prod_{i=1}^{k-1}J^{j}_{i}\Big)^{2}K^{j}_{k}$, for $n=g_{t}^{j}.$ Note that all $J_{k}^{j},K_{k}^{j}$ are non-negative.
Let $N\sim \normal(0,1)$ and recall notations $Q^{(1)}_t$ and $Q^{(2)}_t$ in \eqref{z_1z_2} and $S_n:=\sum_{k=1}^n\big(\prod_{i=1}^{k-1}J^{j}_i\big)^2K^{j}_k$.
Using that
$\widetilde{S}_{g_{t}^{j}}\stackrel{d}{=}S_{g_{t}^{j}}$ for all $t>0$ (Step 1 in the proof of \eqref{toprove3Xj}), on $\{X_{0}=i, X_{t}=j\}$, 
\begin{align}
Y_{t}&\stackrel{d}{=} Q^{(1)}_{t}+\sqrt{Q^{(2)}_{t}}N \non \\
&\stackrel{d}{=} 
e^{ -a(j)(t-\tau_{g_{t}^{j}}^{j})}\Big(\prod_{k=1}^{g^{j}_{t}}J^{j}_{k}\Big)J^{(i,j)}_{0}
+\bigg(
G^{2a,b^2}_{j}\big(t-\tau_{g_{t}^{j}}^{j}\big)+e^{-2a(j)(t-\tau_{g_{t}^{j}}^{j})}\Big[\Big(\prod_{k=1}^{g_{t}^{j}-1}J^{j}_{k}\Big)^{2} K^{(i,j)}_{0}+ \widetilde{S}_{g_{t}^{j}}\Big]
\bigg)^{1/2}N \non\\
&\stackrel{d}{=}Q^{(1)}_{t}
+\bigg(
G^{2a,b^2}_{j}\big(t-\tau_{g_{t}^{j}}^{j}\big)+e^{-2a(j)(t-\tau_{g_{t}^{j}}^{j})}\Big[\Big(\prod_{k=1}^{g_{t}^{j}-1}J^{j}_{k}\Big)^{2} K^{(i,j)}_{0}+ S_{g_{t}^{j}}\Big]
\bigg)^{1/2}N \non\\ 
&=:\sqrt{Z^{*}_{g_{t}^{j}}}\bigg(\mb{A}^{(1)}_{t}
+\bigg(
\mb{A}^{(2)}_{t}+ \mb{A}^{(3)}_{t}\frac{S_{g^{j}_{t}}}{Z^{*}_{g_{t}^{j}}}
\bigg)^{1/2}N\bigg), \label{2be1}
\end{align}
where 
\begin{align*}
\mb{A}^{(1)}_{t}&:=e^{-a(j)(t-\tau_{g_{t}^{j}}^{j})}J_{0}^{(i,j)}\frac{\prod_{k=1}^{g^{j}_{t}}J^{j}_{k}}{\sqrt{Z^{*}_{g_{t}^{j}}}}, \\
\mb{A}^{(2)}_{t}&:= \frac{G^{2a,b^2}_{j}\big(t-\tau_{g_{t}^{j}}^{j}\big)+e^{-2a(j)(t-\tau_{g_{t}^{j}}^{j})}\Big(\prod_{k=1}^{g_{t}^{j}-1}J^{j}_{k}\Big)^{2} K^{(i,j)}_{0}}{Z^{*}_{g_{t}^{j}}} ,\\
\mb{A}^{(3)}_{t}&:=e^{-2a(j)(t-\tau_{g_{t}^{j}}^{j})}. 
\end{align*}
Since $K_{n}^{j}\big(\prod_{i=1}^{n-1}J^{j}_{i}\big)^{2}\le Z^{*}_{n}$ for all $n\ge 1$, $\mb{A}^{(1)}_{\cdot},\mb{A}^{(2)}_{\cdot},\mb{A}^{(3)}_{\cdot}$ are all $O_{P}(1)$. 
Denote the second factor in the product \eqref{2be1} by $\mb{A}^{}_{t}$. 
Since $Z^{*}_{n}\le S_{n}\le nZ^{*}_n$ for all $n\ge 1$, $|\mb{A}^{}_{t}|=O_{P}(\sqrt{t})$ and is strictly positive. 
Moreover, 
$$
|\mb{A}_{t}|^{\frac{1}{\sqrt{t}}}
=\exp\bigg\{\frac{1}{\sqrt{t}}\log\bigg|\mb{A}^{(1)}_{t}+\bigg(\mb{A}^{(2)}_{t}+ \mb{A}^{(3)}_{t}\frac{S_{g^{j}_{t}}}{Z^{*}_{g_{t}^{j}}}\bigg)^{1/2}N\bigg|\bigg\}\stackrel{P}{\to} 1
\quad\text{as }t\to\infty.
$$ 
Consider the probability
\begin{align}
P\big[|Y_{t}|^{\frac{1}{\sqrt{t}}}\in A\mid X_{0}=i,Y_{0}=y_{0}\big]
=\sum_{j\in S}P_{ij}(0,t)P\big[\big(Z^{*}_{g^{j}_{t}}\big)^{\frac{1}{2\sqrt{t}}} |\mb{A}_{t}|^{\frac{1}{\sqrt{t}}}\in A\mid X_{0}=i, X_{t}=j, Y_{0}=y_{0}\big].
\label{t2e2}
\end{align}
Applying Slutsky's theorem through Lemma \ref{lem1.5}, by taking 
$$
\big(L^{(1)}_{t},L^{(2)}_{t},R_{t}\big):=\Big(0, |\mb{A}_{t}|^{\frac{1}{\sqrt{t}}},\big(Z^{*}_{g^{j}_{t}}\big)^{\frac{1}{2\sqrt{t}}}\Big)
$$ 
in the statement of the lemma in order to get rid of $X_{t}=j$ in the conditioning event on the right-hand side in \eqref{t2e2},  yields the following limit result 
\begin{align}
\lim_{t\to\infty}P\big[|Y_{t}|^{\frac{1}{\sqrt{t_{}}}}\in A \big|  X_{0}=i,Y_{0}=y_{0}\big]
=\sum_{j\in S}\pi_{j}\lim_{t\to\infty}P\big[\big(Z^{*}_{g^{j}_{t_{}}}\big)^{\frac{1}{2\sqrt{t_{}}}} \in A\mid X_{0}=i\big]
\label{t2be3}
\end{align}
given the right hand limit of \eqref{t2be3} exists and $\frac{\log Z_{g^{j}_{t_{}}}^{*}}{\sqrt{t_{}}}$ satisfies condition \eqref{Mverific} as $M_{t}$. 
Lemma \ref{lem3} below verifies that. Immediately from the definitions of $M^{\otimes j}_{n}$ and $Z_{n}^{*}$ (see also (4.4) in \cite{hitczenko2011renorming}) follows
$$
2M^{\otimes j}_{g_{t}^{j}}-\max_{1\le k\le g_{t}^{j}}\log K_{k}^{j}\le \log Z_{g_{t}^{j}}^{*} \le 2M^{\otimes j}_{g_{t}^{j}}+\max_{1\le k\le g_{t}^{j}}\log K_{k}^{j}.
$$
It is shown in \cite{hitczenko2011renorming} that 
$$
\frac{M^{\otimes j}_{n}}{\sqrt{n}}\stackrel{d}{\to}\sigma_{j}|N| \quad\text{and}\quad 
\frac{\max_{1\le k\le n}\log K_{k}^{j}}{\sqrt{n}}\stackrel{P}{\to}0
\quad\text{as } n\to\infty.
$$
However, we need a version of the above convergence in a renewal time context: Lemma \ref{lem3} below. 
Using this lemma, it follows that 
$$
\frac{\log Z_{g^{j}_{t_{}}}^{*}}{2\sqrt{t_{}}}\stackrel{d}{\to}\sigma_{j}|N|
\quad\text{as }t\to\infty,
$$ 
which concludes the proof of Theorem \ref{T2}(b).
\hfill$\square$
\end{proof}

\begin{lemma}\label{lem3}
Under the assumptions of Theorem \ref{T2}(b), as $t\to\infty$, 
\begin{align*}
\text{(a)}\quad\frac{\max_{1\le k\le g_{t}^{j}}\log K^{j}_{g_{t}^{j}}}{\sqrt{g_{t}^{j}}}\stackrel{\text{a.s.}}{\to}0,
\quad 
\text{(b)}\quad\frac{M^{\otimes j}_{g_{t}^{j}}}{\sqrt{g_{t}^{j}}}\stackrel{d}{\rightarrow} \sigma_{j} |N|, 
\quad 
\text{(c)}\quad\frac{\log Z_{g^{j}_{t_{}}}^{*}}{\sqrt{t_{}}} \quad\text{verifies}\,\,\eqref{Mverific}\,\,\text{as} \,\, M_{t}.
\end{align*}
\end{lemma}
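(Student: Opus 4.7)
\textbf{Plan for Lemma \ref{lem3}.} The three parts build on one another through the sandwich $2M^{\otimes j}_{g_t^j}-\max_{1\le k\le g_t^j}\log K^j_k\le \log Z^*_{g_t^j}\le 2M^{\otimes j}_{g_t^j}+\max_{1\le k\le g_t^j}\log K^j_k$ already stated in the text.

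\textbf{Part (a).} The assumption in Theorem \ref{T2}(b) yields $E(\log K^j_1)^2<\infty$ and the $K^j_k$ are i.i.d.\ by the renewal argument. A standard Borel--Cantelli consequence of the second moment condition is that $\sum_n P[|\log K^j_n|>\varepsilon\sqrt{n}]\le \sum_n P[(\log K^j_1)^2>\varepsilon^2 n]<\infty$, so $|\log K^j_n|/\sqrt{n}\stackrel{\text{a.s.}}{\to}0$. Elementary: if $a_n/\sqrt{n}\to 0$ then also $\max_{1\le k\le n}a_k/\sqrt{n}\to 0$, giving $\max_{1\le k\le n}|\log K^j_k|/\sqrt{n}\stackrel{\text{a.s.}}{\to}0$. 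Since $g_t^j\stackrel{\text{a.s.}}{\to}\infty$, part (a) follows by composition.

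\textbf{Part (b).} The sequence $(\log J^j_k)_{k\ge 1}$ is i.i.d.\ with mean $-E|I^j_1|E_\pi a(\cdot)=0$ and variance $\sigma_j^2<\infty$ by Assumption \ref{as3}(b). Donsker's invariance principle plus the continuous mapping theorem applied to the supremum functional gives $M^{\otimes j}_n/\sqrt{n}\stackrel{d}{\to}\sigma_j\sup_{0\le s\le 1}B_s\stackrel{d}{=}\sigma_j|N|$ (the L\'evy/reflection identity). To pass from the deterministic index $n$ to the random index $g^j_t$ I would invoke Anscombe's theorem for random indices: the renewal theorem gives $g^j_t/t\stackrel{\text{a.s.}}{\to}1/E|I^j_1|$, and since $x\mapsto\sup_{0\le s\le x}B_s$ is continuous in $x$, the standard Anscombe substitution yields $M^{\otimes j}_{g^j_t}/\sqrt{g^j_t}\stackrel{d}{\to}\sigma_j|N|$.

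\textbf{Part (c).} Combining the sandwich inequality with part (a) gives $\log Z^*_{g^j_t}/\sqrt{t}=2M^{\otimes j}_{g^j_t}/\sqrt{t}+o_P(1)$, and the convergence in distribution to $(2\sigma_j/\sqrt{E|I^j_1|})|N|$ follows from part (b) together with $g^j_t/t\stackrel{\text{a.s.}}{\to}1/E|I^j_1|$. The task is then to verify the increment condition with $\varepsilon(t):=\sqrt{t}$. Writing $n:=g^j_t$, $m:=g^j_{t-\sqrt{t}}$ and decomposing
\begin{align*}
\frac{\log Z^*_n}{\sqrt{t}}-\frac{\log Z^*_m}{\sqrt{t-\sqrt{t}}}=\frac{2(M^{\otimes j}_n-M^{\otimes j}_m)}{\sqrt{t}}+2M^{\otimes j}_m\Big(\frac{1}{\sqrt{t}}-\frac{1}{\sqrt{t-\sqrt{t}}}\Big)+o_P(1),
\end{align*}
the strong Markov property of the random walk at time $m$ gives $0\le M^{\otimes j}_n-M^{\otimes j}_m\le \widetilde M_{n-m}$ where $\widetilde M$ is an independent copy of the running maximum; since $n-m\sim\sqrt{t}/E|I^j_1|$ by the renewal theorem, $\widetilde M_{n-m}=O_P(t^{1/4})$, so the first term is $o_P(1)$. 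The second term is $O_P(\sqrt{t})\cdot O(t^{-1})=o_P(1)$ by a Taylor expansion of $1/\sqrt{t-\sqrt{t}}-1/\sqrt{t}$ and the fact that $M^{\otimes j}_m=O_P(\sqrt{t})$ from part (b).

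\textbf{Main obstacle.} The only delicate point is the Anscombe-type substitution in part (b): one must ensure that the weak convergence of $M^{\otimes j}_n/\sqrt{n}$ transfers cleanly under the random index $g^j_t$, which is not independent of the walk $S^{\otimes j}$. The cleanest route is the functional CLT for the walk indexed by $[0,n]$, composed with the continuous functional $x\mapsto \sup_{0\le s\le x}B_s$ and evaluated at the deterministic point $1/E|I^j_1|$ (the a.s.\ limit of $g^j_t/t$); the independence of $g^j_t$ from the walk is not needed for this continuous-mapping argument once everything is lifted to path space.
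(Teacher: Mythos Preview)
Your treatment of part (a) is correct and matches the paper's argument (Borel--Cantelli from the second-moment hypothesis, then replacing $n$ by $g^j_t$). For part (b) you take a genuinely different route: the paper quotes the Erd\H{o}s--Kac theorem for $M^{\otimes j}_n/\sqrt{n}$ and then \emph{verifies Anscombe's uniform-continuity condition by hand}, via the splitting $|M^{\otimes j}_k/\sqrt{k}-M^{\otimes j}_n/\sqrt{n}|\le|M^{\otimes j}_k/\sqrt{k}-M^{\otimes j}_k/\sqrt{n}|+|M^{\otimes j}_k-M^{\otimes j}_n|/\sqrt{n}$ and a conditioning argument on $(S^{\otimes j}_n,M^{\otimes j}_n)$. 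Your Donsker/continuous-mapping route is cleaner: once $(n^{-1/2}S^{\otimes j}_{[n\cdot]},\,g^j_t/[\mu_j t])\Rightarrow(\sigma_jB,1)$ jointly, the functional $(x,u)\mapsto\sup_{[0,u]}x$ is a.s.\ continuous at the limit and the random-index substitution falls out without isolating Anscombe's condition. Both arguments are valid; yours is shorter, the paper's is more self-contained.

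In part (c), however, there is a real gap in your strong-Markov step. The index $m=g^j_{t-\sqrt{t}}$ is \emph{not} a stopping time for the sequence $(\log J^j_k)_{k\ge 1}$: the event $\{m=k\}=\{\tau^j_k\le t-\sqrt{t}<\tau^j_{k+1}\}$ looks at $|I^j_{k+1}|$, which is a functional of the same excursion as $\log J^j_{k+1}$ and hence correlated with it. For the same reason $n-m=g^j_t-g^j_{t-\sqrt{t}}$ depends on the post-$m$ cycle lengths, which are correlated with the post-$m$ walk increments. So the inequality $M^{\otimes j}_n-M^{\otimes j}_m\le\widetilde M_{n-m}$ with $\widetilde M$ an \emph{independent} copy, and the subsequent claim $\widetilde M_{n-m}=O_P(t^{1/4})$, are not justified as stated. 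The paper avoids this entanglement by inserting the deterministic indices $[\mu_j t]$ and $[\mu_j(t-\varepsilon(t))]$ and telescoping:
\[
\frac{M^{\otimes j}_{g^j_t}}{\sqrt t}-\frac{M^{\otimes j}_{g^j_{t-\varepsilon(t)}}}{\sqrt{t-\varepsilon(t)}}
=\frac{M^{\otimes j}_{g^j_t}-M^{\otimes j}_{[\mu_j t]}}{\sqrt t}
+\Big(\frac{M^{\otimes j}_{[\mu_j t]}}{\sqrt t}-\frac{M^{\otimes j}_{[\mu_j(t-\varepsilon(t))]}}{\sqrt{t-\varepsilon(t)}}\Big)
+\frac{M^{\otimes j}_{[\mu_j(t-\varepsilon(t))]}-M^{\otimes j}_{g^j_{t-\varepsilon(t)}}}{\sqrt{t-\varepsilon(t)}}.
\]
The outer terms are controlled by the concentration $P[|g^j_t/t-\mu_j|>\delta]\to0$ together with the monotonicity of $n\mapsto M^{\otimes j}_n$, which reduces them to differences at \emph{deterministic} indices (e.g.\ $M^{\otimes j}_{[(\mu_j+\delta)t]}-M^{\otimes j}_{[\mu_j t]}$), where the part-(b) estimates apply cleanly. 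Your decomposition can be repaired the same way: sandwich $m$ and $n$ by deterministic indices before invoking any independence.
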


\begin{proof} 
The terms of $(\log K^{j}_{n})_{n\ge 1}$ are i.i.d. Since, by assumption, $E\big(\log K_{1}^{j}\big)^{2}<\infty$, the strong law of large numbers gives
$$
\frac{1}{n}\sum_{k=1}^n \big(\log K_{k}^{j}\big)^{2}\stackrel{\text{a.s.}}{\to}E\big(\log K_{1}^{j}\big)^{2}<\infty 
\quad \text{as }n\to\infty
$$
or equivalently, by the Borel-Cantelli lemma, as used in Step 2 in the proof of Lemma \ref{lem2}, 
$$
\frac{\max_{1\le k \le n}\log K^{j}_{k}}{\sqrt{n}}\stackrel{\text{a.s.}}{\to}0 \quad n\to\infty. 
$$ 
By Theorem 2.1 in \cite{gut2009stopped} we may replace $n$ by $g_{t}^{j}$, from which the conclusion of part (a) follows. We will prove part (b) and (c). For readability, write $\mu_{j}:=1/E|I^{j}_{1}|$. 
By Theorem I in \cite{ErdosKac1946},  
\beqn
\frac{M^{\otimes j}_n}{\sqrt{n}}\stackrel{d}{\to} \sigma_{j}|N| \quad\text{as } n\to\infty
\label{erdosKac}
\eeqn
as $M^{\otimes j}_n$ is the maximum partial sum of a random walk with mean $0$. 

We prove part (b) by showing Anscombe's condition 
(uniform continuity in probability, condition (A) on page 16 in \cite{gut2009stopped}) 
which in our context requires showing the following: Given $\gamma>0,\eta>0$, there exist $\delta>0,n_{0}>0$ such that 
\beqn
P\Big[\max_{\{k: |k-n|<n\delta\}}\Big|\frac{M^{\otimes j}_{k}}{\sqrt{k}}-\frac{M^{\otimes j}_{n}}{\sqrt{n}}\Big|> \gamma \Big]<\eta\quad \forall \,\, n\ge n_{0}.\label{cond0.001}
\eeqn 
Since $\{k: |k-n|<n\delta\}=\{n\le k\le n(1+\delta)\}\cup\{n(1-\delta)\le k\le n\}$ an upper bound for the probability in \eqref{cond0.001}  is 
$$P\Big[\max_{\{k: n\le k\le n+n\delta\}}\Big|\frac{M^{\otimes j}_{k}}{\sqrt{k}}-\frac{M^{\otimes j}_{n}}{\sqrt{n}}\Big|> \gamma \Big]+  P\Big[\max_{\{k: n(1-\delta)\le k \le n\}}\Big|\frac{M^{\otimes j}_{k}}{\sqrt{k}}-\frac{M^{\otimes j}_{n}}{\sqrt{n}}\Big|> \gamma \Big].
$$
We prove that the first term is smaller than $\eta/2$. Similar arguments show that second term is smaller than $\eta/2$ from which \eqref{cond0.001} follows.
Observe that
\begin{align}
&P\Big[\max_{\{k: n\le k\le n(1+\delta)\}}\bigg|\frac{M^{\otimes j}_{k}}{\sqrt{k}}-\frac{M^{\otimes j}_{n}}{\sqrt{n}}\Big|> \gamma \bigg] 
\le P\Big[\max_{\{k: n\le k\le n(1+\delta)\}}\Big(\Big|\frac{M^{\otimes j}_{k}}{\sqrt{k}}-\frac{M^{\otimes j}_{k}}{\sqrt{n}}\Big|+\Big|\frac{M^{\otimes j}_{k}}{\sqrt{n}}- \frac{M^{\otimes j}_{n}}{\sqrt{n}}\Big|\Big)> \gamma \Big]\non\\
&\quad\le P\Big[\frac{M^{\otimes j}_{n(1+\delta)}}{\sqrt{n}}[(1+\delta)^{\frac{1}{2}}-1]>\frac{\gamma}{2}\Big]+P\Big[\frac{M^{\otimes j}_{n(1+\delta)}-M^{\otimes j}_{n}}{\sqrt{n}}>\frac{\gamma}{2}\Big].\label{cond0.003}
\end{align}
For all $\delta\ge 0$, $(1+\delta)^{\frac{1}{2}}\le (1+\frac{\delta}{2})$. Consequently, for all $\delta\ge 0$,
$$
P\Big[\frac{M^{\otimes j}_{n(1+\delta)}}{\sqrt{n}}[(1+\delta)^{\frac{1}{2}}-1]>\frac{\gamma}{2}\Big]
\le P\Big[\frac{\delta(1+\frac{\delta}{2})}{2} \frac{M^{\otimes j}_{n(1+\delta)}}{\sqrt{n(1+\delta)}}>\frac{\gamma}{2}\Big].
$$
For given $\gamma>0,\eta>0$ one can choose a small $\delta_{\eta}>0$ and a large $n_{0}^{(1)}$ such that for all $\delta\le \delta^{(1)}_{\eta}$, as a consequence of \eqref{erdosKac},  
$$
P\Big[\frac{\delta(1+\frac{\delta}{2})}{2} \frac{M^{\otimes j}_{n(1+\delta)}}{\sqrt{n(1+\delta)}}>\frac{\gamma}{2}\Big]<\frac{\eta}{4}, \quad \text{for } n\ge n_{0}^{(1)},
$$
The second term in \eqref{cond0.003} can be shown to be smaller than $\eta/4$. This can be shown by first writing, with  
$A_{1}:=\{(x,y): 0\le y<\infty,-\infty<x<y\}$, 
\begin{align}
P\Big[M^{\otimes j}_{n(1+\delta)}-M^{\otimes j}_{n}> \frac{\gamma}{2} \sqrt{n}\Big] 
&=\int_{A_{1}}P\big[M^{\otimes j}_{n(1+\delta)}-M^{\otimes j}_{n}>\frac{\gamma}{2}\sqrt{n}\mid (S^{\otimes j}_{n},M^{\otimes j}_{n})=(s,t)\big]f_{(S^{\otimes j}_{n},M^{\otimes j}_{n})}(s,t)dsdt\non\\
&=\int_{A_{1}}P\Big[\frac{M^{\otimes j}_{n\delta}}{\sqrt{n\delta}}>\frac{t-s}{\sqrt{n\delta}}+\frac{\gamma\sqrt{n}}{2\sqrt{n\delta}}\Big]f_{(S^{\otimes j}_{n},M^{\otimes j}_{n})}(s,t)dsdt, \label{cond0.004}
\end{align}
where \eqref{cond0.004} follows from the independent increment property of the random walk $(S_{n}^{\otimes j})_{n\ge 1}$. 
For $(s,t)\in A_{1}$, 
$$
P\Big[\frac{M^{\otimes j}_{n\delta}}{\sqrt{n\delta}}>\frac{t-s}{\sqrt{n\delta}}+\frac{\gamma\sqrt{n}}{2\sqrt{n\delta}}\Big]\le P\Big[\frac{M^{\otimes j}_{n\delta}}{\sqrt{n\delta}}> \frac{\gamma}{2\sqrt{\delta}}\Big].
$$ 
By choosing $\delta^{(2)}_{\eta,\gamma}>0$ and $\delta<\delta^{(2)}_{\eta,\gamma}$ such that $\gamma/(2\sqrt{\delta})$ is sufficiently large and choosing $n_{\eta,\gamma}^{(2)}$ large, 
as a consequence of \eqref{erdosKac}, 
$$
P\Big[\frac{M^{\otimes j}_{n\delta}}{\sqrt{n\delta}}> \frac{\gamma}{2\sqrt{\delta}}\Big]<\frac{\eta}{4}, 
\quad \text{for } n\ge n_{\eta,\gamma}^{(2)}.
$$
Putting this upper bound in \eqref{cond0.004}, part (b) follows by taking $n\ge n_{0}^{(1)} \vee n_{\eta,\gamma}^{(2)}$ and $\delta\le \delta^{(1)}_{\eta}\wedge \delta^{(2)}_{\eta,\gamma}$.

Part (c) follows if  $M^{\otimes j}_{g^j_{t}}/\sqrt{t}$ verifies \eqref{Mverific} as $M_{t}$. 
Take $t\mapsto \varepsilon(t)$ to be an increasing function satisfying $\lim_{t\to\infty}\varepsilon(t)=\infty$ and $\lim_{t\to\infty}\varepsilon(t)/t=0$. 
Observe that
\begin{align}
\frac{M^{\otimes j}_{g^j_{t}}}{\sqrt{t}}-\frac{M^{\otimes j}_{g^j_{t-\varepsilon(t)}}}{\sqrt{t-\varepsilon(t)}}=\frac{M^{\otimes j}_{g^j_{t}}-M^{\otimes j}_{[\mu_{j}t]}}{\sqrt{t}} +\frac{M^{\otimes j}_{[\mu_{j}t]}}{\sqrt{t}}  - \frac{M^{\otimes j}_{[\mu_{j}(t-\varepsilon(t))]}}{\sqrt{t-\varepsilon(t)}}+ \frac{M^{\otimes j}_{[\mu_{j}(t-\varepsilon(t))]}-M^{\otimes j}_{g^j_{t-\varepsilon(t)}}}{\sqrt{t-\varepsilon(t)}}.\label{cond0.005}  \end{align}
For $\delta>0,$ set $A^{}_{(\delta,j)}:=\{n\in\mathbb{N}:| n/t-\mu_{j}|\le \delta \}$ and observe that 
$\lim_{t\to\infty}P[g_{t}\in A^{}_{(\delta,j)}]=1$. For the first term on the right-hand side in \eqref{cond0.005}, 
\begin{align*}
P\Big[\frac{M^{\otimes j}_{g^j_{t}}-M^{\otimes j}_{[\mu_{j}t]}}{\sqrt{t}}>\gamma\Big]
&\le P\Big[\frac{M^{\otimes j}_{g^j_{t}}-M^{\otimes j}_{[\mu_{j}t]}}{\sqrt{t}} >\gamma, g^j_{t}\in A^{}_{(\delta,j)}\Big]+P\Big[\frac{M^{\otimes j}_{g^j_{t}}-M^{\otimes j}_{[\mu_{j}t]}}{\sqrt{t}} >\gamma, g^j_{t}\in A^{c}_{(\delta,j)}\Big]\\
&\le P\Big[\frac{M^{\otimes j}_{[(\mu_{j}+\delta)t]}-M^{\otimes j}_{[\mu_{j}t]}}{\sqrt{t}} >\gamma\Big]
+P\Big[\frac{M^{\otimes j}_{[\mu_{j}t]}-M^{\otimes j}_{[(\mu_{j}-\delta)t]}}{\sqrt{t}} >\gamma\Big]
+P\Big[g^j_{t}\notin A^{c}_{(\delta,j)}\Big].
\end{align*}
The first two terms on the right-hand side above goes to $0$ as $t\to\infty$ from the same arguments as those applied to prove part (b). The third term goes to $0$ as $t\to\infty$ since $g^j_t/t\stackrel{\text{a.s.}}{\to}\mu_j$ as $t\to\infty$.
Consequently, the first term on the right-hand side in \eqref{cond0.005} goes to $0$ in probability as $t\to\infty$. 
Similarly, the third term on the right-hand side in \eqref{cond0.005} goes to $0$ in probability as $t\to\infty$ since $t-\varepsilon(t)\to\infty$ as $t\to\infty$. 
The second term on the right-hand side in \eqref{cond0.005} goes to $0$ in probability as $t\to\infty$ by using an argument similar to \eqref{cond0.003} with $\varepsilon(t)/t\to 0$ as $t\to\infty$. 
This concludes the proof of part (c).
\hfill$\square$
\end{proof}

\subsection{Proof of Proposition \ref{CIRapp}}
\begin{proof}
Let $(U^{i}_t)_{t\ge 0}$, $i=1,\dots,n$ be Ornstein-Uhlenbeck processes under a Markovian environment given by 
$dU^{i}_{t}=-a(X_{t})U^{i}_{t}dt+b(X_{t})dW^{i}_{t}$, where $W^{1},\dots,W^{n}$ are i.i.d. standard Brownian motions on $\R$. 
Using Ito's lemma we may express $R_{t}:=\sum_{i=1}^{n}(U_{t}^{i})^{2}$ as the solution to a stochastic differential equation: 
\begin{align*}
d R_{t}^{} 
&= \sum_{i=1}^{n}d(U_{t}^{i})^{2}\\
&=\sum_{i=1}^{n} \Big[2 U_{t}^{i}dU_{t}^{i}+ 2 d\langle U^{i}\rangle_{t}\Big]\\
&=\sum_{i=1}^{n}\Big(-2a(X_{t}) (U_{t}^{i})^{2}+b^{2}(X_{t})\Big) dt + \sum_{i=1}^{n} 2b(X_{t})U_{t}^{i}dW_{t}^{i}\\
&=2\alpha(X_{t})\Big[\frac{nb^{2}(X_{t})}{2a(X_{t})} - R_{t}\Big]dt + 2b(X_{t}) \sqrt{R_{t}}\sum_{i=1}^{n} \frac{U_{t}^{i}dW_{t}^{i}}{\sqrt{R_{t}}}\\
&=\kappa(X_{t})(\theta(X_{t}) - R_{t})dt+\xi(X_{t}) \sqrt{R_{t}} \sum_{i=1}^{n} \frac{U_{t}^{i}dW_{t}^{i}}{\sqrt{R_{t}}}
\end{align*}
Since $X \indep  (W^{i})_{i=1}^{n}$ and $(M_t)_{t\ge 0}$ given by $M_t:=\int_{0}^{t}\sum_{i=1}^{n}U_{s}^{i}dW_{s}^{i}$ is a martingale with quadratic variation $\langle M\rangle_t = \int_{0}^{t}\sum_{i=1}^{n}(U_{s}^{i})^{2}ds =\int_{0}^{t}R_{s}ds$,  by Levy's characterization of Brownian motion, $(W'_t)_{t\ge 0}$ given by 
$$
W'_t := \int_{0}^{t}\sum_{i=1}^{n} \frac{U_{s}^{i}dW_{s}^{i}}{\sqrt{R_{s}}}
$$ 
is a standard Brownian motion. 
Hence, $(R_{t})_{t\ge 0}$ is a weak solution to the CIR SDE \eqref{cir}. In particular, $\mathcal{L}(R_t)$ is the unique time-$t$ marginal distribution of any solution to the CIR SDE \eqref{cir} with the given parametrization. 

The conditions of part (a) implies that Theorem \ref{P1} can be applied to $U^{1},\dots,U^{n}$ jointly which gives
$$
\big(U^{1}_t,\dots,U^{n}_t\big)\stackrel{d}{\to}\sum_{j\in S}\delta_{U}(\{j\})\sqrt{V_j}\big(N_{1},\dots,N_{n}\big)
\quad\text{as }t\to\infty,
$$
where $U\indep \big(N_{1},\dots,N_{n}\big)$, $U\sim\pi$, $N_{1},\dots,N_{n}$ are i.i.d. standard normal and $U,N_{1},\dots,N_{n}$ independent of $V_j$ given in Theorem \ref{P1}. Consequently,
$$
R_{t}\stackrel{d}{\to} \sum_{j\in S}\delta_{U}(\{j\})\sqrt{V_{j}}\sum_{i=1}^{n}N_{i}^2\quad\text{as }t\to\infty.
$$
For parts (b) and (c), notice that 
\begin{align}
P\big[R_{t}\in A\big]&=
\sum_{j\in S}P_{ij}(0,t)P\Big[\sum_{i=1}^{n}(U_{t}^{i})^{2}\in A\mid X_{0}=i, X_{t}=j \Big].
\label{AppP1e1}
\end{align}
Note that similar to \eqref{tre14}, $
P\Big[\big((U_{t_{}}^{1})^{\frac{2}{\sqrt{t_{}}}},\dots,(U_{t_{}}^{n})^{\frac{2}{\sqrt{t_{}}}}\big)\in\cdot \mid X_{0}=i\Big]
=P\Big[\big((U_{t_{}}^{1})^{\frac{2}{\sqrt{t_{}}}},\dots,(U_{t_{}}^{n})^{\frac{2}{\sqrt{t_{}}}}\big)\in\cdot\Big]
$
and 
\begin{align}
\frac{1}{e^{^{-\sqrt{t_{}}2 E_{\pi}a(\cdot)}}}
\big((U_{t_{}}^{1})^{\frac{2}{\sqrt{t_{}}}},\dots,(U_{t_{}}^{n})^{\frac{2}{\sqrt{t_{}}}}\big)
\stackrel{d}{\to}
\exp\Big\{\frac{\sigma^{2}_{j}}{\sqrt{E|I_{j}|}}\big(N_{1},\dots,N_{n}\big)\Big\}
\quad\text{as } t\to\infty.\label{CIR3}
\end{align}
Write
\begin{align}
R_{t}=\sum_{i=1}^{n}(U_{t}^{i})^{2}=(U_{t}^{(n)})^{2}\Big[1+\sum_{i=1}^{n-1}\frac{(U_{t}^{(i)})^{2}}{(U_{t}^{(n)})^{2}}\Big],
\label{CIR4}
\end{align}
where, for each $t$, $U^{(n)}_t\ge \dots \ge U^{(1)}_t$ denote the ordered values. 
Boundedness of  
$$
1\le 1+\sum_{i=1}^{n-1}\frac{(U_{t}^{(i)})^{2}}{(U_{t}^{(n)})^{2}} \le n, 
$$
gives
\begin{align}
\Big[1+\sum_{i=1}^{n-1}\frac{(U_{t_{}}^{(i)})^{2}}{(U_{t_{}}^{(n)})^{2}}\Big]^{\frac{1}{\sqrt{t_{}}}}\stackrel{P}{\to} 1\quad\text{as }t \to\infty.\label{CIR5}
\end{align}
If $\mu$ denotes the weak limit in \eqref{CIR3}, then the discontinuity set $D_h$ of the mapping $(x_1,\dots,x_n)\mapsto \max_{1\le i\le n}x_i=:h(x_1,\dots,x_n)$ satisfies $\mu(D_h)=0$. Hence, the continuous mapping theorem applied to \eqref{CIR3} and $h$ gives
\begin{align}
\frac{1}{e^{^{-\sqrt{t_{}}2 E_{\pi}a(\cdot)}}}
(U_{t_{}}^{(n)})^{\frac{2}{\sqrt{t_{}}}}
\stackrel{d}{\to}
\exp\Big\{\frac{\sigma^{2}_{j}}{\sqrt{E|I_{j}|}}\max_{1\le i\le n}N_{i}\Big\}
\quad\text{as } t\to\infty.\label{CIR6}
\end{align}
Combining \eqref{CIR4}, \eqref{CIR5} and \eqref{CIR6} gives 
$$
\frac{1}{e^{^{-\sqrt{t_{}}2 E_{\pi}a(\cdot)}}}
R_{t_{}}^{\frac{1}{\sqrt{t_{}}}}
\stackrel{d}{\to}
\exp\Big\{\frac{\sigma^{2}_{j}}{\sqrt{E|I_{j}|}}\max_{1\le i\le n}N_{i}\Big\}
\quad\text{as } t\to\infty,
$$
which, upon taking the logarithm using the form in \eqref{AppP1e1} applying the limit argument used in Lemma \ref{lem1.5}, gives the statement in part (b) of the proposition to be proved.
The statement in part (c) is proved similarly.
\hfill $\square$
\end{proof}

\subsection{Proof of Proposition \ref{SIS}}
\begin{proof}
Using that $S_t=n-I_t$ and conditioning on the path of $X$, the dynamics of the infected population can be expressed as
$$
\frac{dI_t}{dt}=\beta(X_t)(n-I_t)I_t-\alpha(X_t)I_t.
$$
Dividing each side by $I^2_t$ the above dynamics is expressed as the following first order ODE for $I^{-1}$:
\begin{align*}
\frac{1}{I_t}=\frac{1}{I_0}e^{-\int_{0}^{t}\gamma(X_{s})ds}+\int_{0}^{t}e^{-\int_{s}^{t}\gamma(X_{r})dr}\beta(X_{s})ds.
\end{align*}
The result now follows immediately after using Corollaries \ref{Cor1} and \ref{Cor3}. For $E_{\pi}\gamma(\cdot)>0$, Corollary \ref{Cor1} provides weak convergence for the second term while the first term goes to $0$ as $t\to\infty$. 
For $E_{\pi}\gamma(\cdot)<0$, using Corollary \ref{Cor3}(a) gives 
\begin{align*}
P\Big[I_{t}\in \Big(e^{tE_{\pi}\gamma(\cdot) - b\sqrt{t}},e^{tE_{\pi}\gamma(\cdot) - a\sqrt{t}}\Big)\Big]
&=P\Big[-\frac{\log I_{t}}{\sqrt{t}}+\sqrt{t}E_{\pi}\gamma(\cdot) \in (a,b)\Big]\\
&\to\sum_{j\in S}\pi_{j}P\big[N\in\big(a\sigma_{j},b\sigma_{j}\big)\big] \quad\text{as }t\to\infty.
\end{align*}
For $E_{\pi}\gamma(\cdot)=0$, using Corollary \ref{Cor3}(b) gives
\begin{align*}
P\Big[I_{t}\in \big(e^{-b\sqrt{t}},e^{-a\sqrt{t}}\big)\Big]=P\Big[-\frac{\log I_{t}}{\sqrt{t}}\in (a,b)\Big]
\to\sum_{j\in S}\pi_{j}P\big[|N|\in (a\sigma_{j},b\sigma_{j})\big] \quad\text{as }t\to\infty.
\end{align*}
\hfill $\square$
\end{proof}

\section{Acknowledgements} The second author thanks Daniele Cappelletti and Carsten Wiuf for several illuminating discussions on a related problem, \cite{longTreaction2019}.
In particular, the idea used to prove Lemma 6.3 is inspired from Lemma 6.3 in \cite{longTreaction2019}.

\end{document}